\newcommand*\samethanks[1][\value{footnote}]{\footnotemark[#1]}
\crefname{equation}{}{}
\crefname{enumi}{}{}
\def\vp#1{}
\renewcommand{\vp}[1]{\footnote{\textcolor{green!40!black}{\textbf{VP: }#1}}}
\newcommand{\keywords}[1]
{
	{\small\textbf{Keywords:} #1}
}
\newtheorem{question}{Question}
\newtheorem{corollary}[question]{Corollary}
\newtheorem{problem}[question]{Problem}
\newtheorem{theorem}[question]{Theorem}
\newtheorem{proposition}[question]{Proposition}
\newtheorem{lemma}[question]{Lemma}
\newtheorem{remark}[question]{Remark}
\newtheorem{claim}[question]{Claim}
\newtheorem{definition}[question]{Definition}
\newtheorem{construction}[question]{Construction}
\numberwithin{question}{section}
\numberwithin{equation}{section}
\title{Minimum degree conditions for rainbow triangles}
\author{Victor Falgas-Ravry\thanks{Institutionen f\"or Matematik och Matematisk Statistik, Ume{\aa} Universitet, Sweden. Emails: \texttt{victor.falgas-ravry}, \texttt{klas.markstrom}, \texttt{eero.raty} \texttt{@umu.se}. } \and Klas Markstr\"om\samethanks \and Eero R\"aty\samethanks}
\begin{document}
	\maketitle	
\begin{abstract}
Let $\mathbf{G}:=(G_1, G_2, G_3)$ be a triple of graphs on a common vertex set $V$ of size $n$. A rainbow triangle in $\mathbf{G}$ is a triple of edges $(e_1, e_2, e_3)$ with $e_i\in G_i$ for each $i$ and $\{e_1, e_2, e_3\}$ forming a triangle in $V$.  In this paper we consider the following question: what triples of minimum degree conditions $(\delta(G_1), \delta(G_2), \delta(G_3))$ guarantee the existence of a rainbow triangle? This may be seen as a minimum degree version of a problem of Aharoni, DeVos, de la Maza, Montejanos and \v{S}\'amal on density conditions for rainbow triangles, which was recently resolved by the authors. We establish that the extremal behaviour in the minimum degree setting differs strikingly from that seen in the density setting, with discrete jumps as opposed to continuous transitions. Our work leaves a number of natural questions open, which we discuss.
\end{abstract}	

\noindent \keywords{extremal graph theory, rainbow triangles, Gallai colourings, Mantel's theorem, min degree}

\section{Introduction}
One of the foundational results in extremal graph theory is Mantel's celebrated 1907 theorem~\cite{Mantel07}, which asserts that a triangle-free graph $G$ on $n$ vertices can have at most $\lfloor \frac{n^2}{4}\rfloor$ edges, with equality if and only if $G$ is (isomorphic to) the complete balanced bipartite graph $T_2(n)$. Mantel's theorem is equivalent to its minimum-degree variant, which states that a triangle-free on $n$ vertices has minimum degree at most $n/2$, which is tight (again, by considering $T_2(n)$). In this paper we will investigate a rainbow version of the latter result: given a triple of graphs $(G_1, G_2, G_3)$ on a common vertex set $V$, what minimum degree conditions on the $G_i$ guarantee the existence of a \emph{rainbow triangle}, that is, a triangle with edges $\{e_1, e_2, e_3\}$ such that $e_i \in G_i$ for each $i$?

\noindent We begin by introducing the requisite terminology and background for our problem.
\begin{definition}\label{def: template}[Colouring templates, colourings]
	An $r$-colouring template on $V$ is an $r$-tuple $\mathbf{G}^{(r)}=(G_1, G_2, \ldots , G_r)$, where each of the $G_i$ is a graph on $V$. Whenever $r$ is clear from context, we omit the superscript $r$ and write $\mathbf{G}$ for $\mathbf{G}^{(r)}$.  An $r$-coloured graph $(H, c)$ is a graph $H=(V(H), E(H))$ together with an $r$-colouring of its edges $c: \ E(H)\rightarrow \{1,2, \ldots, r\}$.  (Note that an $r$-coloured graph may be identified with an $r$-colouring template where the colour classes $G_i$, $1\leq i\leq r$, are pairwise edge-disjoint.)
\end{definition}  
	 \begin{definition}[Coloured and rainbow subgraphs]
	 	  Given an $r$-coloured graph $(H,c)$, we say that an $r$-colouring template $\mathbf{G}^{(r)}$ on a vertex set $V$ contains a copy of $(H,c)$ as a subgraph if there is an injection $f: \ V(H)\rightarrow  V$ such that for each edge $e=\{x,y\} \in E(H)$ we have $\{f(x),f(y)\}\in G_{c(e)}$.  Further, given  a graph $H$, we say that $\mathbf{G}$ contains a rainbow copy of $H$ if $\mathbf{G}$ contains $(H,c)$ for some $r$-colouring $c: \ E(H)\rightarrow \{1,2, \ldots, r\}$  assigning distinct colours to distinct edges.
\end{definition}
\noindent Gallai~\cite{Gallai67} initiated the study of $r$-colourings with no rainbow triangles, proving a structure theorem that was subsequently re-discovered and extended by a number of other researchers~\cite{CameronEdmonds97,GyarfasSimonyi04}; $r$-coloured graphs and $r$-colouring templates containing no rainbow triangles are accordingly referred to as \emph{Gallai colourings} and \emph{Gallai colouring templates} respectively.

One of the first rainbow variants of Mantel's theorem was obtained by Keevash, Saks, Sudakov and Verstra\"etewho considered the following problem: how large does the arithmetic mean of the size of the colour classes $G_1, G_2, \ldots G_r$ have to be to guarantee the existence of a rainbow $H$ in an $r$-colouring template? As a special case of more general results, they showed~\cite[theorem 1.2]{KeevashSaksSudakovVerstraete04}  that when $H$ is a triangle, it is best to take $G_i=T_2(n), \forall i$ when $r \geq 4$, while for $r=3$ it is best to take $G_1=G_2=K_n$ and take $G_3$ to be the empty graph.  As the latter construction features  an empty colour class, it is natural to ask what happens in the $r=3$ case if all three of the colour classes  are large. This was done by  Diwan and Mubayi~\cite{DiwanMubayi06}, who asked: what is the least $\alpha>0$ such that for all $n$ sufficiently large, every $3$-colouring template $\mathbf{G}$ on an $n$-set $V$ with $\min\{\vert E(G_i)\vert: \ 1\leq i\leq 3\}>\alpha n^2$ contains a rainbow triangle? Their question was answered by Aharoni, DeVos, de la Maza, Montejanos and \v{S}\'amal~\cite[Theorem 1.2]{AharoniDeVosdelaMazaMontejanoSamalin20}, who determined the optimal value for $\alpha$. Aharoni \textit{et al}~\cite[Problem 1.3]{AharoniDeVosdelaMazaMontejanoSamalin20} suggested the more general problem of determining  which triples of edge densities $(\alpha_1, \alpha_2, \alpha_3)$ guarantee the existence  a rainbow triangle, where $\alpha_i$ denotes the edge-density of $G_i$. Later Frankl, Gy\"ori, He, Lv, Salia, Tompkins, Varga and Zhu~\cite{FranklGyoriHeLvSaliaTompkins22} considered  the related problem of determining the maximum of the geometric mean of the sizes of the colour classes  $\left(e(G_1)e(G_2)e(G_3)\right)^{1/3}$ over all $n$-vertex Gallai $3$-colouring templates $\mathbf{G}$, and provided a certain construction which they conjectured to be asymptotically extremal for this problem.

In~\cite{FRMarkstromRaty22} the authors resolved the question of Aharoni \textit{et al}, and, as a by-product, proved the aforementioned conjecture of Frankl \textit{et al}. However as we noted in that paper, the two families of extremal examples of Gallai colouring templates for the various triples of critical densities $(\alpha_1, \alpha_2, \alpha_3)$ given in~\cite[Theorem 1.13]{FRMarkstromRaty22} have as a common feature that at least one of the colour classes $G_i$, $i\in \{1,2,3\}$ contains isolated vertices. Motivated by this observation, we investigate in this paper the minimum degree variant of Aharoni \textit{et al}'s problem. As we show, the extremal behaviour in the minimum degree setting differs starkly from that seen in the edge-density setting. In particular, unlike what holds for Mantel's classical theorem, whose edge-density and minimum-degree versions are essentially equivalent,  determining edge-density and minimum-degree conditions for rainbow triangles are two genuinely different problems. To set our result in its proper context, we first make the following trivial observation.
\begin{proposition}\label{prop: trivial min degree bound}
Let $\mathbf{G}$ be a $3$-colouring template on $n$ vertices. Suppose $\delta(G_3)>0$. If $\delta(G_1)+\Delta(G_2)> n$, then $\mathbf{G}$ contains a rainbow triangle.
\end{proposition}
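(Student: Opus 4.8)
The plan is to locate a single edge of $G_3$ whose two endpoints can be joined to a common third vertex via one $G_1$-edge and one $G_2$-edge, and to produce that third vertex by a one-line pigeonhole count on neighbourhoods.

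First I would fix a vertex $v\in V$ with $\deg_{G_2}(v)=\Delta(G_2)$; this is the only place where the maximum-degree (rather than a minimum-degree) hypothesis on $G_2$ is used, and it is essential since the assumption couples $\delta(G_1)$ with $\Delta(G_2)$. Because $\delta(G_3)>0$, the vertex $v$ has at least one $G_3$-neighbour, so I may pick $u\in V$ with $\{u,v\}\in G_3$. This $\{u,v\}$ will be the third edge of the prospective rainbow triangle, and it remains to find $w\in V\setminus\{u,v\}$ with $\{u,w\}\in G_1$ and $\{v,w\}\in G_2$.

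Next I would run the count inside $V\setminus\{u,v\}$, a set of size $n-2$. The sets $N_{G_1}(u)\setminus\{v\}$ and $N_{G_2}(v)\setminus\{u\}$ are both contained in $V\setminus\{u,v\}$ and have sizes at least $\delta(G_1)-1$ and $\Delta(G_2)-1$ respectively. Since $(\delta(G_1)-1)+(\Delta(G_2)-1)=\delta(G_1)+\Delta(G_2)-2>n-2$, these two subsets of $V\setminus\{u,v\}$ must intersect; any $w$ in the intersection satisfies $\{u,w\}\in G_1$, $\{v,w\}\in G_2$, $\{u,v\}\in G_3$, and $u,v,w$ are pairwise distinct, so $(\{u,w\},\{v,w\},\{u,v\})$ is a rainbow triangle in $\mathbf{G}$.

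There is no substantive obstacle here — the statement is, as advertised, a trivial observation. The only points requiring a little care are the bookkeeping that guarantees $w\notin\{u,v\}$ (hence the passage to $V\setminus\{u,v\}$ and the two ``$-1$''s in the cardinality bounds), and the remark that $v$ must be chosen to maximise the $G_2$-degree rather than taken arbitrarily.
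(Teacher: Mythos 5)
Your proposal is correct and is essentially the paper's proof: pick $v$ of maximum $G_2$-degree, take a $G_3$-neighbour, and apply pigeonhole to the $G_1$- and $G_2$-neighbourhoods. (The extra ``$-1$'' bookkeeping is harmless but not needed, since $N_{G_2}(v)\cap N_{G_1}(u)$ automatically avoids $u$ and $v$ because no vertex lies in its own neighbourhood, and the paper's one-line count suffices.)
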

\begin{proof}
Suppose $\delta(G_3)>0$ and $\delta(G_1)+\Delta(G_2)>n$. Let $v$ be such that $d_{G_2}(v)=\Delta(G_2)$, and let $\{v,v'\}\in E(G_3)$. By the pigeonhole principle, there exists $v''$ such that $\{v,v''\}\in E(G_2)$ and $\{v', v''\}\in E(G_1)$. Thus $\{v,v',v''\}$ is a rainbow triangle in $\mathbf{G}$.
\end{proof}
\noindent
In particular, Proposition~\ref{prop: trivial min degree bound} implies that if $\mathbf{G}$ is a Gallai $3$-colouring template on $n$ vertices with $\delta(G_i)>0$ for $i\in \{2,3\}$ and $\delta(G_1)> \left(1-\frac{1}{r}\right)n$, then $\min\left(\delta(G_2), \delta(G_3)\right)<\frac{n}{r}$. Our main result is a substantial improvement of this trivial  bound: we show that in fact $\delta(G_2) + \delta(G_3) \leq \frac{2n}{r+1}$. As a consequence, it follows that in $n$-vertex Gallai $3$-colouring templates, the maximum value of $\min\left\{\delta(G_2), \delta(G_3)\right\}$ that can be attained jumps down from $\frac{n}{r}$ to $\frac{n}{r+1}$ when $\delta_1(G)$ increases from $n-\lceil\frac{n}{r}\rceil$ to $\lceil n-\frac{n}{r}\rceil+1$. This stands in sharp contrast to the more continuous behaviour seen in the edge-density case~\cite[Theorem 1.13]{FRMarkstromRaty22}.
\begin{theorem}\label{theorem: forcing min degrees}
Let $\mathbf{G}$ be a $3$-colouring template on $n$ vertices, and let $r\in \mathbb{N}$. If $\mathbf{G}$ contains no rainbow triangle and satisfies $\delta(G_1)>\left(1-\frac{1}{r}\right)n$ and $\delta(G_2)\geq \delta(G_3)>0$, then  $\delta(G_2) + \delta(G_3) \leq \frac{2n}{r+1}$. In particular, we must have $\delta(G_3) \leq \frac{n}{r+1}$. 
\end{theorem}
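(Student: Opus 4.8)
The plan is to reduce to studying the neighbourhoods of one well-chosen vertex, extract an easy bound with denominator $r$, and then do the real work of improving the denominator to $r+1$. Write $\overline{N}_1(x):=V\setminus N_{G_1}(x)$ for the $G_1$-non-neighbourhood of $x$ (so $x\in\overline{N}_1(x)$). Two observations drive everything. First, the Gallai condition reformulates locally: if $\{x,y\}$ is an edge of $G_i$ then, since $\mathbf{G}$ has no rainbow triangle, $N_{G_j}(x)\cap N_{G_k}(y)=\varnothing$ whenever $\{i,j,k\}=\{1,2,3\}$; in particular $\{v,w\}\in G_3$ forces $N_{G_2}(w)\subseteq\overline{N}_1(v)$, and $\{v,w\}\in G_2$ forces $N_{G_3}(w)\subseteq\overline{N}_1(v)$. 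Second, the hypothesis $\delta(G_1)>\bigl(1-\tfrac1r\bigr)n$ says exactly that $\lvert\overline{N}_1(x)\rvert=n-d_{G_1}(x)<\tfrac nr$ for every vertex $x$.

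First I would pick a vertex $v$ with $d_{G_3}(v)=\delta(G_3)$, and set $B:=N_{G_3}(v)$ (nonempty, as $\delta(G_3)>0$) and $T:=\overline{N}_1(v)$. Applying the first observation to each edge $\{v,w\}$ with $w\in B$ gives $N_{G_2}(w)\subseteq T$, whence $\delta(G_2)\le d_{G_2}(w)\le\lvert T\rvert<\tfrac nr$; symmetrically every $u\in N_{G_2}(v)$ has $N_{G_3}(u)\subseteq T$. Together with $\delta(G_3)=\lvert B\rvert$ this already yields the weak bound $\delta(G_2)+\delta(G_3)<\tfrac{2n}{r}$ (which also drops out of iterating Proposition~\ref{prop: trivial min degree bound} and its colour-permuted versions). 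The crux is to shave this down to $\tfrac{2n}{r+1}$: concretely one wants to show that $v$ lies in a ``block'' of size at most $\tfrac{n}{r+1}$ that contains essentially all of $B$ and of $N_{G_2}(v)$, so that $\delta(G_2)+\delta(G_3)\le\lvert T\rvert+\lvert B\rvert\le\tfrac{2n}{r+1}$. Given this, the final assertion $\delta(G_3)\le\tfrac{n}{r+1}$ is immediate from $\delta(G_2)\ge\delta(G_3)$. The natural way to force the block to be small is to iterate the confinement relations: starting from $v$ and alternately passing from a vertex $x$ to a vertex of $N_{G_2}(x)$ and then to a vertex of $N_{G_3}(\cdot)$, one produces vertices $v=v_0,v_1,\dots$ whose non-neighbourhoods $\overline{N}_1(v_i)$ each have size $<\tfrac nr$ and which successively capture the relevant $G_2$- and $G_3$-neighbourhoods; if $\delta(G_2)+\delta(G_3)$ exceeded $\tfrac{2n}{r+1}$ one should be able to continue for $r$ steps while keeping the $N_{G_1}(v_i)$ from sharing a common vertex, producing $r+1$ vertices with empty common $G_1$-neighbourhood and — via the confinement relations and the degree conditions — a rainbow triangle. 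An alternative route is a stability/compression argument showing the extremal templates must be (close to) the complete $(r{+}1)$-partite one, $G_1=K_n\setminus\bigcup_{i=0}^{r}\binom{V_i}{2}$ and $G_2=G_3=\bigcup_{i=0}^{r}\binom{V_i}{2}$ with $r+1$ balanced parts, which realises $\delta(G_2)+\delta(G_3)=\tfrac{2n}{r+1}-2$.

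The main obstacle is precisely this last step: a single vertex's link only ``sees'' the constant $\tfrac nr$, and getting the correct $\tfrac{n}{r+1}$ requires genuinely global control over how the sets $\overline{N}_1(v_i)$ overlap and over whether the iteration can be sustained — equivalently, over how the $(r+1)$-block structure is forced. One must also be careful that we work with \emph{templates} rather than proper colourings: the colour classes may overlap (indeed $G_2=G_3$ in the extremal example), so $N_{G_1}(v),N_{G_2}(v),N_{G_3}(v)$ need not be disjoint, and the confinement relations above must be applied keeping the endpoints in their correct roles. It may well be cleanest to argue by contradiction, taking a counterexample minimising $n$ (or maximising $e(G_2)+e(G_3)$) and reading off the block structure from extremality.
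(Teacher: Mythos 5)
Your setup is fine as far as it goes: the local confinement observation (an edge $\{v,w\}\in G_3$ forces $N_{G_2}(w)\subseteq V\setminus N_{G_1}(v)$, a set of size $<n/r$) and the resulting bound $\delta(G_2)+\delta(G_3)<\tfrac{2n}{r}$ are exactly the paper's trivial starting point (Proposition~\ref{prop: trivial min degree bound}). But everything that constitutes the actual theorem --- improving the denominator from $r$ to $r+1$ --- is left as a sketch that you yourself flag as ``the main obstacle,'' and neither of the two routes you propose survives scrutiny. The iteration idea (``one should be able to continue for $r$ steps \dots producing $r+1$ vertices with empty common $G_1$-neighbourhood and \dots a rainbow triangle'') is never argued: nothing in the degree hypotheses guarantees the process can be sustained, and having $r+1$ vertices whose $G_1$-non-neighbourhoods cover $V$ does not by itself produce a rainbow triangle. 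The stability/compression route is worse than unproven --- it is false as stated, because the paper exhibits a whole family of pairwise very different near-extremal Gallai templates (Construction~\ref{construction: general}, the $\mathbf{C_{P,c}}$), so extremal configurations need \emph{not} be close to the complete $(r{+}1)$-partite template $\mathbf{T}(r{+}1,n)$; there is no unique stable pattern to compress towards.

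For comparison, the paper's proof is a long structural argument by contradiction on a minimal $r$: it first rules out non-trivial $1$-cuts (using the inductive hypothesis for smaller $r$), shows $G_2\cap G_3=\emptyset$, proves $G_2$ is bipartite (via a careful analysis of a shortest odd cycle), shows $G_3$ respects or splits along that bipartition, then forces $G_2$ to be disconnected, extracts a four-part structure, and derives a contradiction by double counting common neighbourhoods against the bound $\Delta(G_2)<n/r$. Moreover the case $r=2$ needs an entirely separate and longer argument because the inequality $\delta(G_1)+2\min(\delta(G_2),\delta(G_3))>n$, which lets one treat colours $2$ and $3$ symmetrically when $r\ge 3$, fails there. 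None of this machinery (or any substitute for it) appears in your proposal, so the gap is not a matter of detail: the core of the proof is missing.
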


\noindent The upper bounds on the sum $\delta(G_2) + \delta(G_3)$ and on the value of $\delta(G_3)$ in Theorem~\ref{theorem: forcing min degrees} are easily seen to be tight up to an additive term of $1$, by considering the following Tur\'an like construction.
\begin{construction}
Given $r\in \mathbb{N}$, we let $\mathbf{T}=\mathbf{T}(r,n)$ denote the $3$-colouring template in which $T_1$ is a complete, balanced $r$-partite graph on $n$ vertices and $T_2=T_3$ is the union of disjoint complete graphs on each of the $r$ sets in the $r$-partition of $T_1$. 
\end{construction}
\noindent
Clearly, for every $r\geq 1$, $\mathbf{T}$ is a Gallai colouring template, and the minimum degrees of the colour classes in $\mathbf{T}$ meet the trivial bounds from Proposition~\ref{prop: trivial min degree bound}. In particular note that $\delta(T_1(r+1,n))>(1-\frac{1}{r})n$ and $\delta(T_2(r+1,n))=\delta(T_3(r+1,n))=\lfloor \frac{n}{r+1}\rfloor -1$, while $\delta(T_1(r,n))=\lfloor(1-\frac{1}{r})n\rfloor$ and $\delta(T_2(r,n))=\delta(T_3(r,n))=\lfloor\frac{n}{r}\rfloor-1> \frac{n}{r+1}$. Many other near extremal constructions are possible, however. Indeed, consider the following general family of constructions. 

\begin{construction}\label{construction: general}
	Let $r\geq 3$, and let $P$ be a graph on $[r]$ consisting of a disjoint union of isolated vertices, edges and copies of $K_4$. Let $c: \ E(P)\rightarrow \{0, 2,3\}$ be an arbitrary proper edge colouring of the edges of $P$ (i.e.\ a colouring such that adjacent edges are assigned distinct colours).

	Now define a colouring template $\mathbf{C_{P,c}}=\mathbf{C_{P,c}}(n)$ from $P$ and $c$ as follows. Let $\sqcup_{i=1}^r V_i$ be a balanced partition of $V=[n]$ into $r$ disjoint sets. Set 
	$(C_{P,c})_2$ to be the union of (i) all $V_i^{(2)}$  for which $i$ is an isolated vertex in $P$ or a vertex in a component of size $2$ whose edge is in colour $3$, and
	(ii) all $(V_{i}, V_j)^{(2)}$ for which $ij$ is an edge of $P$ with $c(ij)=2$ or $ij$ is an edge of a component of size $2$ whose edge is in colour $0$. Similarly, 	set $(C_{P,c})_3$ to be the union of (i) all $V_i^{(2)}$  for which $i$ is an isolated vertex in $P$ or a vertex in a component of size $2$ whose edge is in colour $2$, and (ii) all $(V_{i}, V_j)^{(2)}$ for which $ij$ is an edge of $P$ with $c(ij)=3$ or an edge of a component of size $2$ whose edge is in colour $0$. Finally, set $(C_{P,c})_1$ to be the union of (i) all $V_i^{(2)}$ for which $i$ is not an isolated vertex in $P$, and (ii) all $(V_i, V_j)^{(2)}$ such that $ij$ is a non-edge of $P$ or an edge of $P$ with $c(ij)\in\{2,3\}$, or an edge in a component of size $2$ whose edge is in colour $0$.
\end{construction}
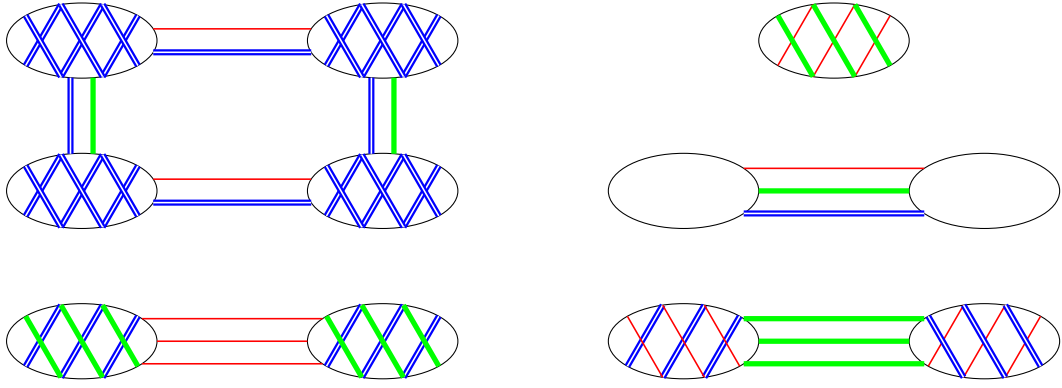
\begin{figure}\label{figure: P construction}\centering

		\begin{tikzpicture}
	\draw [line width = 0.0 mm, white] (0, 5.5) -- (1,5.5);

	
	\draw (0,4) ellipse (1cm and 0.5cm);
	\draw (4,4) ellipse (1cm and 0.5cm);
	
	\draw (0,2) ellipse (1cm and 0.5cm);
	\draw (4,2) ellipse (1cm and 0.5cm);

	\draw [line width = 0.3 mm, blue,double] (-0.7467, 4 -0.3326) -- (-0.2773,4+ 0.48);
	\draw [line width = 0.3 mm, blue,double] (-0.2773,4 -0.48) -- (0.2773,4+ 0.48);
	\draw [line width = 0.3 mm, blue,double] (0.2773,4 -0.48) -- (0.7467,4 + 0.3326);

	\draw [line width = 0.3 mm, blue,double] (-0.7467, 4 + 0.3326) -- (-0.2773,4-0.48);
	\draw [line width = 0.3 mm, blue,double] (-0.2773,4+ 0.48) -- (0.2773,4-0.48);
	\draw [line width = 0.3 mm, blue,double] (0.2773,4+0.48) -- (0.7467,4-0.3326);

	\draw [line width = 0.3 mm, blue,double] (-0.7467, 2 -0.3326) -- (-0.2773,2+ 0.48);
	\draw [line width = 0.3 mm, blue,double] (-0.2773,2 -0.48) -- (0.2773,2+ 0.48);
	\draw [line width = 0.3 mm, blue,double] (0.2773,2 -0.48) -- (0.7467,2 + 0.3326);

	\draw [line width = 0.3 mm, blue,double] (-0.7467, 2 + 0.3326) -- (-0.2773,2-0.48);
	\draw [line width = 0.3 mm, blue,double] (-0.2773,2+ 0.48) -- (0.2773,2-0.48);
	\draw [line width = 0.3 mm, blue,double] (0.2773,2+0.48) -- (0.7467,2-0.3326);

	\draw [line width = 0.3 mm, blue,double] (4-0.7467, 4 -0.3326) -- (4-0.2773,4+ 0.48);
	\draw [line width = 0.3 mm, blue,double] (4-0.2773,4 -0.48) -- (4+0.2773,4+ 0.48);
	\draw [line width = 0.3 mm, blue,double] (4+0.2773,4 -0.48) -- (4+0.7467,4 + 0.3326);

	\draw [line width = 0.3 mm, blue,double] (4-0.7467, 4 + 0.3326) -- (4-0.2773,4-0.48);
	\draw [line width = 0.3 mm, blue,double] (4-0.2773,4+ 0.48) -- (4+0.2773,4-0.48);
	\draw [line width = 0.3 mm, blue,double] (4+0.2773,4+0.48) -- (4+0.7467,4-0.3326);

	\draw [line width = 0.3 mm, blue,double] (4-0.7467, 2 -0.3326) -- (4-0.2773,2+ 0.48);
	\draw [line width = 0.3 mm, blue,double] (4-0.2773,2 -0.48) -- (4+0.2773,2+ 0.48);
	\draw [line width = 0.3 mm, blue,double] (4+0.2773,2 -0.48) -- (4+0.7467,2 + 0.3326);

	\draw [line width = 0.3 mm, blue,double] (4-0.7467, 2 + 0.3326) -- (4-0.2773,2-0.48);
	\draw [line width = 0.3 mm, blue,double] (4-0.2773,2+ 0.48) -- (4+0.2773,2-0.48);
	\draw [line width = 0.3 mm, blue,double] (4+0.2773,2+0.48) -- (4+0.7467,2-0.3326);	

	\draw [line width = 0.7 mm, red,semithick] (0.95, 2+0.1561) -- (4- 0.95, 2+0.1561);
	\draw [line width = 0.3 mm, blue,double] (0.95, 2-0.1561) -- (4-0.95, 2-0.1561);
	
	\draw [line width = 0.7 mm, red,semithick] (0.95, 4+0.1561) -- (4-0.95, 4+0.1561);
	\draw [line width = 0.3 mm, blue,double] (0.95, 4-0.1561) -- (4-0.95, 4-0.1561);

	\draw [line width = 0.3 mm, blue,double] (-0.15, 4 - 0.4943) -- (-0.15, 2 + 0.4943);
	\draw [line width = 0.7 mm, green] (0.15, 4 - 0.4943) -- (0.15, 2 + 0.4943);
	
	\draw [line width = 0.3 mm, blue,double] (4-0.15, 4 - 0.4943) -- (4-0.15, 2 + 0.4943);
	\draw [line width = 0.7 mm, green] (4+0.15, 4 - 0.4943) -- (4+0.15, 2 + 0.4943);
	
	
	\draw (8,2) ellipse (1cm and 0.5cm);
	\draw (12,2) ellipse (1cm and 0.5cm);
	
	\draw [line width = 0.7 mm, red,semithick] (8+0.8, 2+0.3) -- (8+3.2, 2+0.3);
	\draw [line width = 0.3 mm, blue,double] (8+0.8, 2-0.3) -- (8+3.2,2 -0.3);
	\draw [line width = 0.7 mm, green] (8+1, 2) -- (8+3, 2);
	
	
	\draw(10,4) ellipse (1cm and 0.5cm);
	\draw [line width = 0.7 mm, red,semithick] (10-0.7467, 4-0.3326) -- (10-0.2773,4+0.48);
	\draw [line width = 0.7 mm, red,semithick] (10-0.2773,4-0.48) -- (10+0.2773,4+0.48);
	\draw [line width = 0.7 mm, red,semithick] (10+0.2773,4-0.48) -- (10+0.7467,4+0.3326);

	\draw [line width = 0.7 mm, green] (10-0.7467, 4+0.3326) -- (10-0.2773,4-0.48);
	\draw [line width = 0.7 mm, green] (10-0.2773,4+0.48) -- (10+0.2773,4-0.48);
	\draw [line width = 0.7 mm, green] (10+0.2773,4+0.48) -- (10+0.7467,4-0.3326);
	
	\draw (0,0) ellipse (1cm and 0.5cm);
	\draw (4,0) ellipse (1cm and 0.5cm);

	\draw [line width = 0.7 mm, red,semithick] (0.8, 0.3) -- (3.2, 0.3);
	\draw [line width = 0.7 mm, red,semithick] (0.8, -0.3) -- (3.2, -0.3);
	\draw [line width = 0.7 mm, red,semithick] (1, 0) -- (3, 0);
	
	\draw [line width = 0.3 mm, blue,double] (-0.7467, -0.3326) -- (-0.2773,0.48);
	\draw [line width = 0.3 mm, blue,double] (-0.2773,-0.48) -- (0.2773,0.48);
	\draw [line width = 0.3 mm, blue,double] (0.2773,-0.48) -- (0.7467,0.3326);

	\draw [line width = 0.7 mm, green] (-0.7467, 0.3326) -- (-0.2773,-0.48);
	\draw [line width = 0.7 mm, green] (-0.2773,0.48) -- (0.2773,-0.48);
	\draw [line width = 0.7 mm, green] (0.2773,0.48) -- (0.7467,-0.3326);

	\draw [line width = 0.3 mm, blue,double] (4-0.7467, -0.3326) -- (4-0.2773,0.48);
	\draw [line width = 0.3 mm, blue,double] (4-0.2773,-0.48) -- (4+0.2773,0.48);
	\draw [line width = 0.3 mm, blue,double] (4+0.2773,-0.48) -- (4+0.7467,0.3326);

	\draw [line width = 0.7 mm, green] (4-0.7467, 0.3326) -- (4-0.2773,-0.48);
	\draw [line width = 0.7 mm, green] (4-0.2773,0.48) -- (4+0.2773,-0.48);
	\draw [line width = 0.7 mm, green] (4+0.2773,0.48) -- (4+0.7467,-0.3326);
	

	
	\draw (8,0) ellipse (1cm and 0.5cm);
	\draw (12,0) ellipse (1cm and 0.5cm);

	\draw [line width = 0.7 mm, green] (8 + 0.8, 0.3) -- (8 + 3.2, 0.3);
	\draw [line width = 0.7 mm, green] (8 + 0.8, -0.3) -- (8 + 3.2, -0.3);
	\draw [line width = 0.7 mm, green] (8 + 1, 0) -- (8 + 3, 0);

	\draw [line width = 0.3 mm, blue,double] (8-0.7467, -0.3326) -- (8-0.2773,0.48);
	\draw [line width = 0.3 mm, blue,double] (8-0.2773,-0.48) -- (8+0.2773,0.48);
	\draw [line width = 0.3 mm, blue,double] (8+0.2773,-0.48) -- (8+0.7467,0.3326);

	\draw [line width = 0.7 mm, red,semithick] (8-0.7467, 0.3326) -- (8-0.2773,-0.48);
	\draw [line width = 0.7 mm, red,semithick] (8-0.2773,0.48) -- (8+0.2773,-0.48);
	\draw [line width = 0.7 mm, red,semithick] (8+0.2773,0.48) -- (8+0.7467,-0.3326);
	
	\draw [line width = 0.7 mm, red,semithick] (8+4-0.7467, -0.3326) -- (8+4-0.2773,0.48);
	\draw [line width = 0.7 mm, red,semithick] (8+4-0.2773,-0.48) -- (8+4+0.2773,0.48);
	\draw [line width = 0.7 mm, red,semithick] (8+4+0.2773,-0.48) -- (8+4+0.7467,0.3326);

	\draw [line width = 0.3 mm, blue,double] (8+4-0.7467, 0.3326) -- (8+4-0.2773,-0.48);
	\draw [line width = 0.3 mm, blue,double] (8+4-0.2773,0.48) -- (8+4+0.2773,-0.48);
	\draw [line width = 0.3 mm, blue,double] (8+4+0.2773,0.48) -- (8+4+0.7467,-0.3326);
	
	\end{tikzpicture}

	\caption{The building blocks used in Construction~\ref{construction: general}, with colours blue (doubled lines), red (thin lines) and green (thick lines) corresponding to edges in colours $1$, $2$ and $3$ respectively. The top left figure corresponds to a $K_4$ in $P$, the top right figure corresponds to an isolated vertex in $P$, the middle right figure to a component of size $2$ in $P$ whose edge is assigned the colour $0$, the bottom right figure to a component of size $2$ whose edge is assigned the colour $3$, and the bottom left figure to a component of size $2$ whose edge is assigned the colour $2$. A $\mathbf{C_P}$ colouring template is built by combining vertex-disjoint copies of these building blocks, and connecting distinct building blocks by edges in colour $1$. }
\end{figure}	
\noindent See Figure~\ref{figure: P construction} for a picture of the building blocks used to construct $\mathbf{C_{P,c}}$. It is easily checked that for any graph $P$ on $r\geq 2$ vertices consisting of vertex-disjoint isolated vertices, edges and copies of $K_4$, $\mathbf{C_{P,c}}$ is a Gallai colouring template with 
\begin{align*}
n-\left\lceil\frac{n}{r}\right\rceil-1\leq \delta((C_{P,c})_1) \leq n-\frac{n}{r}, && \left\lfloor\frac{n}{r}\right\rfloor -1 \leq \min\left( \delta\left({(C_{P,c})}_2\right),\delta\left({(C_{P,c})}_3\right)\right)\leq \frac{n}{r}.
\end{align*} 
In particular, we do not have stability for Theorem~\ref{theorem: forcing min degrees} (or at least no unique stable pattern) --- a somewhat unusual feature in extremal graph theory.

We now move on to some notation which is required for the proofs, followed by proofs,  and at the end of the paper we give some further discussion and open problems.

\subsection{Notation}
We write $[n]:=\{1,2,\ldots n\}$,  $S^{(2)}:=\{\{s,s'\}:\  s,s' \in S, s\neq s'\}$ and $(S,T)^{(2)}:=\{\{s,t\}: \ s\in S, t\in T\}$. Where convenient, we identify $G_i$ with its edge-set  $E(G_i)$.  We also write $xy$ for $\{x,y\}$. We use $G_i[X]$ and $G_i[X,Y]$ as a notation for the subgraph of $G_i$ induced by the vertex-set $X$ and for the bipartite subgraph of $G_i$ induced by the bipartition $X\sqcup Y$ respectively. Throughout the remainder of the paper, we shall use $\vert G_i\vert$, $\vert G_i[X]\vert$ and $\vert G_i[X,Y]\vert$  as shorthands for $\vert E(G_i)\vert$, $\vert E(G_i[X])\vert$ and $\vert E(G_i[X,Y])\vert$ respectively.

We use standard graph theoretic terminology. In particular, $N_G(v)$ denotes the neighbourhood of the vertex $v$ in the graph $G$, and $d_G(v)$ its degree. We write $\delta(G):=\min\{ d_G(v): \ v\in V(G)\}$ for the minimum degree of $G$ and $\Delta(G)$ for its maximum degree. Given a $3$-colouring template $\mathbf{G}$ on a set $V$, we call a pair $xy\in V^{(2)}$ a \emph{rainbow edge} if $xy\in \bigcap_{i=1}^3G_i$. Further, we call a pair $xy$ which is contained in at least two of the colour classes $G_1, G_2, G_3$ a \emph{bi-chromatic edge}.

\section{Minimum colour-degree conditions for rainbow triangles}\label{section: min degree}
\subsection{Proof strategy}\label{subsection: proof strategy mindegree}
\begin{proof}[Proof of Theorem~\ref{theorem: forcing min degrees}]
We shall prove Theorem~\ref{theorem: forcing min degrees} by contradiction: suppose its statement is false. Let $r$ be the least natural number for which it fails to be true, and let $\mathbf{G}$ be a Gallai colouring template on $N$ vertices that provides a counterexample to Theorem~\ref{theorem: forcing min degrees} for that value of $r$.

It is easy to see that $r\geq 2$. Indeed, for any Gallai $3$-colouring template $\mathbf{G}$ on $n$ vertices with $\delta(G_1)>0$, Proposition~\ref{prop: trivial min degree bound} (with colours $1$ and $3$ interchanged), implies that $\delta(G_2)+\delta (G_3)< n$.

If $r\geq 3$, then we shall exploit the minimum degree conditions $\delta(G_1)>(1-\frac{1}{r})N$ and $\delta(G_2)+ \delta(G_3)>\frac{2N}{r+1}$ together with the absence of rainbow triangles to infer structural information about $\mathbf{G}$, from which we will eventually derive a contradiction. More specifically, we will first show that there are no bi-chromatic edges in colours $23$ (Lemma~\ref{lem: mindeg, no 23 edge}), and that $G_2$ is a bipartite graph (Lemma~\ref{lem: mindeg, G2 bipartite})  while $G_3$ is well-behaved with respect to $G_2$'s bipartition (Lemma~\ref{lemma: G3 structure}). From there, we shall be able to derive a contradiction and show $\mathbf{G}$ cannot both satisfy our minimum degree conditions and fail to contain a rainbow triangle.

Our approach to the case $r=2$ is similar in spirit (we rule out bi-chromatic edges in colour $23$ in Lemma~\ref{lem: mindeg, no 23 edge r=2} and show one of $G_2$, $G_3$ must be bipartite in Lemma~\ref{lemma: G_3 bipartite r=2}, before deriving a contradiction); however for $r=2$, the key inequality
\begin{align}\label{eq: key ineq r at least 3}
\delta(G_1)+ 2\min(\delta(G_2),\delta(G_3))>N\end{align}
which drives many of the proofs in the $r\geq3$ case fails. Indeed, since $\delta(G_3)>0$ and $\delta(G_1)>\frac{r-1}{r}N$, Proposition~\ref{prop: trivial min degree bound} implies that $\delta(G_2)\leq \Delta(G_2)\leq  N-\delta(G_1)<\frac{N}{r}$. Thus 
\begin{align}\label{eq: (r-1)N/(r(r+1)) bound on delta G3}\delta(G_3)\geq \frac{2N}{r+1}-\delta(G_2)>\frac{r-1}{r(r+1)}N,
\end{align} which is greater or equal to $\frac{N}{2r}$ (and thus implies~\eqref{eq: key ineq r at least 3}) if and only if $r\geq 3$. If $r\geq 3$, thanks to~\eqref{eq: key ineq r at least 3} we are able to treat the colours $2$ and $3$ interchangeably. This is no longer true when $r=2$, and we must come up with alternative and much longer arguments that are specially tailored to that setting and do not rely on~\eqref{eq: key ineq r at least 3}. We deal with the $r\geq 3$ and $r=2$ cases in Section~\ref{subsection: r>2 case} and Section~\ref{subsection: proof of r=2} respectively, proving the following theorems: 
\begin{theorem}\label{theorem: r =2 case}
Suppose $\mathbf{G}$ is a Gallai $3$-colouring template on $N$ vertices with $\delta(G_1)> \frac{N}{2}$ and $\delta(G_2)\geq  \delta(G_3)>0$. Then $\delta(G_2)+\delta(G_3)\leq \frac{2N}{3}$.
\end{theorem}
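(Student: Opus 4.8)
\noindent The plan is to argue by contradiction, following the blueprint of the $r\ge 3$ case but without recourse to the key inequality~\eqref{eq: key ineq r at least 3}. Suppose $\mathbf{G}$ is a Gallai $3$-colouring template on $N$ vertices with $\delta(G_1)>\tfrac N2$ and $\delta(G_2)\ge\delta(G_3)>0$, and, contrary to the claim, with $\delta(G_2)+\delta(G_3)>\tfrac{2N}3$. Applying Proposition~\ref{prop: trivial min degree bound} with the colours permuted (using $\delta(G_2),\delta(G_3)>0$) we get $\Delta(G_2),\Delta(G_3)\le N-\delta(G_1)<\tfrac N2$, hence $\tfrac N6<\delta(G_3)\le\delta(G_2)<\tfrac N2$ and $\delta(G_1)+\delta(G_2)+\delta(G_3)>\tfrac{7N}6$. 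The chief difficulty throughout is that $\delta(G_3)$ may be as small as about $\tfrac N6<\tfrac N4$, so~\eqref{eq: key ineq r at least 3} fails and colours $2$ and $3$ cannot be interchanged; almost every argument below must be run twice, or one-sidedly.

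\medskip

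\noindent\emph{Step 1 (no bi-chromatic $23$ edge).} First I would exclude \emph{rainbow edges} $xy\in G_1\cap G_2\cap G_3$: the unions $N_{G_1}(x)\cup N_{G_2}(x)\cup N_{G_3}(x)$ and its counterpart at $y$ each have size $>\tfrac N2$, so they meet, and the absence of a rainbow triangle through $xy$ forces every common vertex to be joined monochromatically, in one shared colour, to both $x$ and $y$; the degree bounds then rule this out. Next, suppose $xy\in(G_2\cap G_3)\setminus G_1$. A rainbow triangle on $\{x,y,z\}$ exists exactly when one of $xz,yz$ lies in $G_1$ and the other in $G_2\cup G_3$, so the Gallai property gives
\[
N_{G_1}(x)\cap\bigl(N_{G_2}(y)\cup N_{G_3}(y)\bigr)=N_{G_1}(y)\cap\bigl(N_{G_2}(x)\cup N_{G_3}(x)\bigr)=\emptyset .
\]
Since $d_{G_1}(x),d_{G_1}(y)>\tfrac N2$ while $d_{G_2}(v)+d_{G_3}(v)\ge\delta(G_2)+\delta(G_3)>\tfrac{2N}3$ at every vertex $v$, this forces $|N_{G_2}(v)\cap N_{G_3}(v)|>\tfrac N6$ for $v\in\{x,y\}$, and each such set is independent in $G_1$ (a $G_1$-edge inside it, with two colour-$2,3$ edges at $v$, would span a rainbow triangle). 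The plan is to iterate: writing $H:=G_2\cap G_3$, on its support $H$ has minimum degree $>\tfrac N6$ and maximum degree $<\tfrac N2$, its neighbourhoods are $G_1$-independent, by the displayed relations they lie in complements of large $G_1$-neighbourhoods, and $H$-adjacent vertices have $>\tfrac N6$ common $G_1$-neighbours. Playing these facts off $\delta(G_1)>\tfrac N2$ and $\delta(G_2)+\delta(G_3)>\tfrac{2N}3$ --- the technical core of the step, probably via an extremal choice of $H$-edge or a growing-structure argument --- should force $H=\emptyset$.

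\medskip

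\noindent\emph{Step 2 (one of $G_2,G_3$ is bipartite).} With $G_2,G_3$ edge-disjoint, suppose both contain an odd cycle and take a shortest one, say $C\subseteq G_2$. If $C=x_1x_2x_3$ is a triangle the argument is short: as $\delta(G_3)>0$ each $x_i$ has a $G_3$-neighbour, which lies off the triangle by edge-disjointness, and the Gallai property forces $N_{G_3}(x_i)\cap N_{G_1}(x_j)=\emptyset$ for $i\ne j$ together with related identities, which the minimum-degree bounds contradict. The laborious case --- and the step I expect to be the main obstacle --- is $|C|\ge 5$: one runs a minimality/parity argument along $C$, exploiting at each vertex the disjointness forced by the absence of short rainbow triangles through its two incident $G_2$-edges, and because colours $2,3$ are not interchangeable this splits into several cases (one may first need to establish that $G_3$ is well-behaved relative to the structure around $C$).

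\medskip

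\noindent\emph{Step 3 (endgame).} Say $G_3$ is bipartite with parts $X\sqcup Y=V$ (the case ``$G_2$ bipartite'' being analogous, with the asymmetry tracked explicitly). Then $\delta(G_3)>\tfrac N6$ gives $|X|,|Y|\in(\tfrac N6,\tfrac{5N}6)$. Applying the Gallai property to edges $xy\in G_3$ with $x\in X$, $y\in Y$ constrains $G_1$ and $G_2$ relative to $(X,Y)$: roughly, $G_3$'s degree is paid for across the bipartition, $G_1$'s large degree is paid for mostly across it too, and $G_2$ is squeezed by having to avoid completing rainbow triangles with either. A double count of degrees at a well-chosen vertex of $X$, or of edges within $X$ and within $Y$, should then contradict $\delta(G_2)+\delta(G_3)>\tfrac{2N}3$, completing the proof. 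I expect Steps 1 and 2 (terminating the $H$-iteration, and excluding long odd cycles without the symmetry~\eqref{eq: key ineq r at least 3}) to carry essentially all the difficulty, with the endgame largely mechanical once the bipartite structure is in hand.
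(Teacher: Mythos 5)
Your skeleton (first kill $G_2\cap G_3$, then extract a bipartite colour class, then double count) does match the paper's strategy in outline, but the proposal defers exactly the steps that carry the difficulty, and one of the steps you do sketch in detail is not valid. Concretely, in Step~2 you claim that a triangle $x_1x_2x_3$ in $G_2$ is excluded by a ``short'' argument from $N_{G_3}(x_i)\cap N_{G_1}(x_j)=\emptyset$ and the degree bounds; but those identities only give disjoint sets of total size greater than $\frac{N}{2}+\frac{N}{6}+\frac{N}{6}=\frac{5N}{6}$, which is no contradiction, and no purely local counting of this kind can work: the template with $G_1$ a complete balanced tripartite graph and $G_2=G_3$ a union of three cliques is Gallai, has $\delta(G_1)>\frac{N}{2}$ and $\delta(G_2)+\delta(G_3)=\frac{2N}{3}-O(1)$, and $G_2$ is full of triangles. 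This is why the paper never proves $G_2$ triangle-free or bipartite in general. Instead it first sets up the bi-chromatic structure --- after Lemma~\ref{lem: mindeg, no 23 edge r=2}, every vertex has either $\vert N_{G_1\cap G_2}(v)\vert>\frac{N}{6}$ or $\vert N_{G_1\cap G_3}(v)\vert>\frac{N}{6}$, giving the partition $V=X\sqcup Y$ --- and only then proves triangle-freeness of $G_2[Y]$ and $G_3[X]$ (Lemma~\ref{lemma: no G3 edges between X and Y no triangles in G2Y or G3X}), bipartiteness of $G_3[X]$ (Lemma~\ref{lemma: G_3 bipartite r=2}, itself split into the cases $\delta(G_3)\gtrless\frac{N}{4}$), and finally $X=V$ (Lemma~\ref{lemma: all vertices incident with 12 edges, r=2}). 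Your dichotomy ``one of $G_2,G_3$ is bipartite, via a shortest odd cycle taken directly under the raw degree hypotheses'' skips this scaffolding, and without it the odd-cycle analysis you describe cannot be closed.

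The same issue of deferral affects Steps~1 and~3. In Step~1 the conclusion $H=G_2\cap G_3=\emptyset$ is left as ``probably via an extremal choice of $H$-edge or a growing-structure argument''; in the paper this is a full page of argument (the $X,Y,Z$ partition in Lemma~\ref{lem: mindeg, no 23 edge r=2}, with the claims $G_1[X]=\emptyset$, $\vert Z\vert>\frac{N}{3}$, $G_3\subseteq X^{(2)}\cup Y^{(2)}\cup Z^{(2)}$, etc.), and nothing in your sketch indicates how the iteration terminates. In Step~3 the endgame is not ``largely mechanical'': because the extremal examples sit right at $\frac{2N}{3}$, the paper needs $X=V$, that every vertex sends $G_1$-edges into both sides of $G_3$'s bipartition, the additional bound $\delta(G_3)<\frac{N}{4}$ (Claim~\ref{cl: r= 2 case, delta G3 <N/4}), common-$G_3$-neighbour statements, and an averaging computation whose final inequality closes only with the precise constants $\frac{5N}{12}$, $\frac{N}{6}$, $\frac{N}{2}$. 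So as it stands the proposal is an outline whose hard parts are unproved, and its one concretely argued shortcut (the $G_2$-triangle case) is refuted by the near-extremal constructions.
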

\begin{theorem}\label{theorem: r >2 case}
	Suppose $\mathbf{G}$ is a Gallai $3$-colouring template on $N$ vertices with $\delta(G_1)> \frac{(r-1)N}{r}$, for some $r\geq 3$, $\delta(G_2)>0$ and $\delta(G_3)>0$. Then $\delta(G_2)+\delta(G_3)\leq \frac{2N}{r+1}$.
\end{theorem}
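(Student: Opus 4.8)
The plan is to assume, for contradiction, that $\mathbf{G}$ is a Gallai $3$-colouring template on $N$ vertices with $\delta(G_1)>\frac{(r-1)N}{r}$, $\delta(G_2),\delta(G_3)>0$ and $\delta(G_2)+\delta(G_3)>\frac{2N}{r+1}$, and to argue by induction on $r$, the statement of Theorem~\ref{theorem: forcing min degrees} being available for all smaller values of $r$ (so that in the endgame we may restrict to $r-1$). Write $\delta_i:=\delta(G_i)$. First I would extract the easy consequences of Proposition~\ref{prop: trivial min degree bound}: applying it with the colours $2$ and $3$ in each of their two possible roles gives $\Delta(G_2),\Delta(G_3)<\frac{N}{r}$, hence $\delta_2,\delta_3<\frac{N}{r}$, and then the sum hypothesis yields $\min(\delta_2,\delta_3)>\frac{2N}{r+1}-\frac{N}{r}=\frac{(r-1)N}{r(r+1)}\geq\frac{N}{2r}$ since $r\geq 3$. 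This is exactly~\eqref{eq: key ineq r at least 3}, $\delta(G_1)+2\min(\delta_2,\delta_3)>N$, which is what allows colours $2$ and $3$ to be treated symmetrically. I would also record the basic local obstruction to be used constantly: if $xy$ is an edge of $G_i$ and $\{j,k\}=\{1,2,3\}\setminus\{i\}$, then $N_{G_j}(x)\cap N_{G_k}(y)=\emptyset$, since a common such neighbour would complete a rainbow triangle.

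For Lemma~\ref{lem: mindeg, no 23 edge} (no bi-chromatic $23$-edge), suppose $xy\in G_2\cap G_3$. Applying the local obstruction to $xy\in G_3$ and to $xy\in G_2$ shows $N_{G_1}(x)$ is disjoint from $N_{G_2}(y)\cup N_{G_3}(y)$, and symmetrically with $x,y$ swapped; moreover any two vertices of $N_{G_2}(y)\cap N_{G_3}(y)$ are non-adjacent in $G_1$ (else they form a rainbow triangle with $y$), so $N_{G_2}(y)\cap N_{G_3}(y)$ is $G_1$-independent and hence has size $\leq N-\delta_1<\frac{N}{r}$, and likewise at $x$. Feeding these into $|N_{G_1}(x)|+|N_{G_2}(y)|+|N_{G_3}(y)|-|N_{G_2}(y)\cap N_{G_3}(y)|\leq N$ gives a bound that is only barely consistent with the hypotheses, so a second idea is needed: I would either iterate the disjointness along a walk in $G_2\cap G_3$ issuing from $xy$, or run a global double count of the $(G_2\cup G_3)$-edges between the set $S$ of vertices incident with some bi-chromatic $23$-edge and its complement — each vertex of $S$ has fewer than $\frac{N}{r}$ such neighbours by the first disjointness, whereas each vertex outside $S$ has more than $\frac{2N}{r+1}$, as its $G_2$- and $G_3$-neighbourhoods are then disjoint — to contradict $\delta(G_1)>\frac{(r-1)N}{r}$. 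The payoff is that $G_2$ and $G_3$ become edge-disjoint, so $d_{G_2}(v)+d_{G_3}(v)\geq\delta_2+\delta_3$ for every $v$.

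For Lemmas~\ref{lem: mindeg, G2 bipartite} and~\ref{lemma: G3 structure}, to see $G_2$ is bipartite I would take a shortest odd cycle $C$ of $G_2$ (automatically induced) and a $G_3$-neighbour $w$ of a vertex $u$ of $C$ (which exists as $\delta_3>0$ and lies off $C$ because $G_2,G_3$ are now edge-disjoint and $C$ is induced): the local obstruction along the two $G_2$-edges of $C$ at $u$ forces $w\notin N_{G_1}(u')\cup N_{G_1}(u'')$ for the two $C$-neighbours of $u$, and ranging over $C$ while combining with~\eqref{eq: key ineq r at least 3} and the smallness of $G_1$-independent sets should overdetermine the $G_1$-neighbourhoods and contradict $\delta(G_1)>\frac{(r-1)N}{r}$. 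Fixing then a bipartition $V=A\sqcup B$ realising $G_2$ (extending arbitrarily over its components and isolated vertices), a parallel analysis — using that $G_3$-edges inside $A$ or inside $B$ interact with the $G_2$-edges across $A,B$, while $G_3$-edges across $A,B$ are forbidden $23$-edges — pins down the location of $G_3$'s edges relative to $\{A,B\}$, which is the content of Lemma~\ref{lemma: G3 structure}.

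Finally, with $G_2$ bipartite on $A\sqcup B$, $G_3$ constrained as above, and no $23$-edges, every non-$G_1$-edge is a mono-chromatic $2$- or $3$-edge respecting the bipartition; counting at a vertex of the smaller of $A,B$ should cap $\delta_2+\delta_3$ (or $\delta_1+\delta_2+\delta_3$) below the hypothesised value, or else one restricts $\mathbf{G}$ to one side of the bipartition, promoting the then-dense class (either $G_3$ or $G_1$) to the role of colour $1$, to obtain a Gallai $3$-colouring template on fewer vertices that violates the inductive hypothesis for $r-1$. I expect the main obstacle to be precisely that every inequality in this argument is tight up to an additive constant — the theorem itself is tight up to $+1$, witnessed by Construction~\ref{construction: general} — so the naive neighbourhood-disjointness estimates always fall a hair short; the real work lies in squeezing out the extra vertices needed to close the gap, via the smallness of $G_1$-independent sets, iterated or global counting, and careful bookkeeping of how $G_2$'s bipartition meshes with the permissible positions of $G_3$'s edges.
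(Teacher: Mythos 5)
Your outline coincides with the paper's (minimal counterexample in $r$, the inequality $\delta(G_1)+2\min(\delta_2,\delta_3)>N$, then: no $23$-edges, $G_2$ bipartite, $G_3$ aligned with $G_2$'s bipartition, endgame), but at each of the genuinely hard steps the argument you propose either is only a gesture or would fail. Most concretely, for $G_2\cap G_3=\emptyset$ your double count between $S$ and its complement does not produce a contradiction: the two facts you record (vertices of $S$ satisfy $\vert N_{G_2}(v)\cup N_{G_3}(v)\vert<N/r$, vertices outside $S$ satisfy $d_{G_2}(u)+d_{G_3}(u)>\frac{2N}{r+1}$ with disjoint neighbourhoods) are mutually consistent and create no tension with $\delta(G_1)>\frac{(r-1)N}{r}$ by themselves. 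The paper needs substantially more: it splits $V$ into $X$ (vertices on $23$-edges), $Y$ (vertices sending a $2$- or $3$-edge to $X$) and $Z$, proves $Z=\emptyset$ using the ``no non-trivial $1$-cut'' lemma (Lemma~\ref{lem:1-cut}) --- and this is where the induction on $r$ is actually deployed, by restricting the template to one side of a $1$-cut, not in the endgame as you envisage --- then proves $\vert X\vert\geq r$ and that $G_1[X]$ is $K_r$-free (a separate idea: extend a $K_r$ to a $K_{r+1}$ in $G_1$ and find a vertex sending three same-coloured $2$/$3$-edges into it), and only then closes via Tur\'an's theorem plus a count of colour-$1$ edges leaving $X$, ending in a quadratic inequality with no real solutions. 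None of this is recoverable from your sketch.

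The same applies to the later stages. The paper's proof of Lemma~\ref{lem: mindeg, G2 bipartite} first establishes, via the pigeonhole principle and Lemma~\ref{lem: either every bichromatic degree is zero or large} (again resting on the $1$-cut lemma), that \emph{every} vertex is incident with more than $\frac{(r-1)N}{r(r+1)}$ bi-chromatic edges in colours $12$ or $13$; this is the lever behind triangle-freeness of $G_2$, the independence of $G_3$-neighbourhoods in $G_2$, and the counts of edges into a shortest odd cycle that force $\delta(G_2),\delta(G_3)<\frac{2N}{3r}$, contradicting $\delta(G_2)+\delta(G_3)>\frac{2N}{r+1}$ for $r\geq 3$. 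Your sketch never extracts this structural fact, and ``overdetermine the $G_1$-neighbourhoods'' is not an argument. Finally, your endgame does not close: restricting $\mathbf{G}$ to one side of $G_2$'s bipartition makes $G_2$ (and possibly $G_3$) empty there, so the statement for $r-1$ --- which requires $\delta(G_2),\delta(G_3)>0$ --- cannot be invoked, and since the bound is tight up to an additive $1$ with many non-isomorphic near-extremal patterns (the $\mathbf{C_{P,c}}$ of Construction~\ref{construction: general}), no count at a single vertex of the smaller part can suffice. The paper instead proves common-neighbourhood lemmas, shows $G_2$ must be disconnected (with a separate ad hoc case for $r=4$), derives a four-part structure for $G_2$ and $G_3$ (Lemma~\ref{lemma: Four parts}), shows $r\geq 7$, and finishes with a global double count against $\Delta(G_2)<N/r$. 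These are missing ideas, not merely unchecked details.
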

\noindent Theorem~\ref{theorem: forcing min degrees} follows immediately from these two results together with Proposition~\ref{prop: trivial min degree bound}. \end{proof}

\subsection{Proof of the case $r\geq 3$}\label{subsection: r>2 case}
\begin{proof}[Proof of Theorem~\ref{theorem: r >2 case}]
Having outlined our proof strategy in the subsection above, we now carry it out. By Proposition~\ref{prop: trivial min degree bound} and Theorem~\ref{theorem: r =2 case}, we may assume the statement of Theorem~\ref{theorem: forcing min degrees} is true for $r=1$ and $r=2$. Suppose Theorem~\ref{theorem: r >2 case} is false, and let $r\geq 3$ be the least natural number for which it fails. We let $\mathbf{G}$ be a putative counterexample on $N$ vertices for this minimum value of $r$, and we denote its vertex set by $V$.

Note that in this proof (as in the statement of Theorem~\ref{theorem: r >2 case}), the colours $2$ and $3$ play a symmetrical role. Given a colour $c\in \{2,3\}$, we shall denote by $\overline{c}$ the complementary colour, $\{\overline{c}\}:=\{2,3\}\setminus \{c\}$. We observe that, by applying Proposition~\ref{prop: trivial min degree bound} as in the proof of~\eqref{eq: (r-1)N/(r(r+1)) bound on delta G3}, the bounds
\begin{align}\label{eq: ub and lb bound on delta(G_i)}
\frac{N}{r}>\Delta(G_c)\geq \delta(G_c) \geq \frac{2N}{r+1}-\Delta(G_{\overline{c}}) > \frac{r-1}{r(r+1)}N
\end{align}
hold for each $c\in \{2,3\}$. Further, the bound $\delta(G_1)+2\delta(G_c)> \delta(G_1)+2\frac{r-1}{r(r+1)}N>N$ holds (since $r\geq 3$).

We begin by showing that there is no non-trivial bipartition of $V$ with no edge in colour $2$ or $3$ going across it; this  simple fact will be used several times in the argument.
\begin{definition}[$1$-cut]
A bipartition $V=A\sqcup B$ is a $1$-cut if $\vert G_2[A,B]\vert =\vert G_3[A,B]\vert=0$, i.e.\ there are no edges in colours $2$ or $3$ across the partition. A $1$-cut is non-trivial if both $A$ and $B$ are non-empty.
\end{definition}
\begin{lemma}\label{lem:1-cut} 
	The Gallai colouring template $\mathbf{G}$ does not contain a non-trivial $1$-cut.
\end{lemma}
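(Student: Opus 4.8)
The plan is to argue by contradiction: suppose $V=A\sqcup B$ is a non-trivial $1$-cut, with (say) $\vert A\vert \leq \vert B\vert$, so in particular $\vert A\vert \leq N/2 < \delta(G_1)$. Since there are no edges of $G_2$ or $G_3$ across the cut, every vertex $v\in A$ must find all of its $G_2$- and $G_3$-neighbours inside $A$, and likewise every vertex of $B$ finds its $G_2$- and $G_3$-neighbours inside $B$. The key structural observation is that $\mathbf{G}[A]$ and $\mathbf{G}[B]$ are themselves Gallai colouring templates (no rainbow triangle is created by passing to an induced sub-template), and the minimum degree conditions on $G_2$ and $G_3$ are \emph{inherited}: for every $v\in A$ we have $d_{G_c[A]}(v)=d_{G_c}(v)\geq \delta(G_c)$ for $c\in\{2,3\}$, and similarly on $B$. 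So $\mathbf{G}[A]$ is a Gallai template on $\vert A\vert$ vertices with $\delta((G_2)[A])\geq \delta(G_2)$ and $\delta((G_3)[A])\geq \delta(G_3)$, and the analogue holds for $B$.

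Now I would play the inherited degree conditions off against the size of the smaller side. First, since $\delta(G_3)>0$, the side $A$ is nonempty and in fact $\vert A\vert \geq \delta(G_2)+1 > \frac{2N}{r+1}-\Delta(G_3)+1$ — more usefully, $\vert A\vert > \delta(G_c)$ for $c\in\{2,3\}$ by~\eqref{eq: ub and lb bound on delta(G_i)}. The cleanest route is to look at the colour-$1$ degrees: a vertex $v\in A$ has $d_{G_1}(v)>\frac{(r-1)N}{r}$, but all of these $G_1$-neighbours lying in $B$ contribute at most $\vert B\vert = N-\vert A\vert$, so $v$ has at least $\frac{(r-1)N}{r}+1 - (N-\vert A\vert) = \vert A\vert - \frac{N}{r}+1$ neighbours in $G_1[A]$; combined with its $d_{G_2}(v)\geq \delta(G_2)$ neighbours in $G_2[A]$ and $d_{G_3}(v)\geq \delta(G_3)$ neighbours in $G_3[A]$, and using~\eqref{eq: key ineq r at least 3} in the form $\delta(G_1)+2\delta(G_c)>N$, we get a strong lower bound on how much of $A$ is covered. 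The plan is to show that this forces $\vert A\vert$ to be large — specifically $\vert A\vert > \frac{r'-1}{r'}N$-type bounds — so that $\mathbf{G}[A]$ (after noting $\vert A\vert\leq N/2$, hence $r'\geq 2$ for the appropriate parameter) becomes a counterexample to Theorem~\ref{theorem: forcing min degrees} with a strictly smaller vertex count, contradicting minimality. Alternatively, and perhaps more robustly, one rescales: inside $A$ one has $\delta((G_1)[A]) > \vert A\vert - \frac{N}{r} \geq \vert A\vert - \frac{\vert A\vert \cdot N}{r\vert A\vert}$, and since $\vert A\vert \leq N/2$ one can check $\delta((G_1)[A]) > (1-\tfrac{1}{r'})\vert A\vert$ for some integer $r' < r$ while the $G_2,G_3$ min-degrees in $A$ still satisfy $\delta((G_2)[A])+\delta((G_3)[A]) > \frac{2\vert A\vert}{r'+1}$; then $\mathbf{G}[A]$ contradicts the minimality of $r$ (the statement of Theorem~\ref{theorem: forcing min degrees} being assumed for all smaller $r$, in particular $r-1\geq 2$).

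I expect the main obstacle to be the bookkeeping in the last step: one needs to verify that the rescaled minimum-degree inequalities inside $A$ genuinely land in the regime where the induction hypothesis (Theorem~\ref{theorem: r =2 case} together with Theorem~\ref{theorem: r >2 case} for smaller $r$, or more cleanly Proposition~\ref{prop: trivial min degree bound}) can be invoked, rather than merely ``almost'' doing so. In particular one must be careful about whether the strict inequality $\delta((G_1)[A]) > (1-\frac{1}{r'})\vert A\vert$ can be achieved with an honest integer $r'$; it may be more economical to bypass any induction and instead directly apply Proposition~\ref{prop: trivial min degree bound} \emph{inside} $A$: since $\delta((G_3)[A])>0$ it suffices to show $\delta((G_1)[A])+\Delta((G_2)[A])>\vert A\vert$, i.e.\ $\delta(G_1) - (N-\vert A\vert) + \delta(G_2) > \vert A\vert$, which rearranges to $\delta(G_1)+\delta(G_2) > N$ — and this follows immediately from~\eqref{eq: key ineq r at least 3} (as $\delta(G_1)+2\delta(G_2)>N$ and hence $\delta(G_1)+\delta(G_2)>N-\delta(G_2)$, needing only the mild bound $\delta(G_1)\geq N - 2\delta(G_2) + \delta(G_2)$... one checks $\delta(G_1) + \delta(G_2) > N$ directly from $\delta(G_1) > \frac{(r-1)N}{r}$ and $\delta(G_2) > \frac{r-1}{r(r+1)}N$ when $r\geq 3$, since $\frac{r-1}{r} + \frac{r-1}{r(r+1)} = \frac{(r-1)(r+2)}{r(r+1)} \geq 1$ for $r\geq 3$). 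That would yield a rainbow triangle inside $A\subseteq V$, the desired contradiction, and avoids induction entirely — I would present this cleaner version, keeping the inductive alternative in reserve in case the constants in $A$ turn out to be too tight when $\vert A\vert$ is close to $N/2$.
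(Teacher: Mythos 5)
Your overall plan --- pass to the induced sub-template on one side of the cut, note that the $G_2,G_3$ degrees are inherited, and then invoke either Proposition~\ref{prop: trivial min degree bound} or the statement of Theorem~\ref{theorem: forcing min degrees} for a smaller parameter --- is indeed how the paper starts, but both concrete routes you propose break down. The ``cleaner'' route needs $\delta(G_1)+\delta(G_2)>N$, and the arithmetic you give for it is wrong: $\frac{r-1}{r}+\frac{r-1}{r(r+1)}=\frac{(r-1)(r+2)}{r(r+1)}=1-\frac{2}{r(r+1)}<1$ for every $r$, not $\geq 1$. Worse, the inequality is not merely unproven but impossible in the setting of the proof: since $\delta(G_3)>0$ and $\mathbf{G}$ is Gallai, Proposition~\ref{prop: trivial min degree bound} applied to the whole template forces $\delta(G_1)+\Delta(G_2)\leq N$, hence $\delta(G_1)+\delta(G_2)\leq N$ in any putative counterexample (this is exactly where the upper bound $\Delta(G_c)<\frac{N}{r}$ in~\eqref{eq: ub and lb bound on delta(G_i)} comes from). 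So applying Proposition~\ref{prop: trivial min degree bound} inside $A$ can never be made to fire this way.

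The inductive fallback, as you state it (WLOG restrict to the smaller side $A$), also has a genuine gap. Writing $\vert A\vert=\frac{\alpha N}{r+1}$, the inherited bound is $\delta(G_1[A])>\left(1-\frac{r+1}{\alpha r}\right)\vert A\vert$, and to contradict $\delta(G_2)+\delta(G_3)>\frac{2N}{r+1}$ via the theorem for some $r'<r$ you need an integer $r'$ with $\lceil\alpha\rceil-1\leq r'\leq\frac{\alpha r}{r+1}$, i.e.\ $\alpha r\geq (r+1)(\lceil\alpha\rceil-1)$. This can fail precisely on the smaller side: e.g.\ for $r=3$ and $\vert A\vert$ just above $\frac{N}{4}$ (so $\alpha$ slightly above $1$), the bound on $\delta(G_1[A])$ is vacuous (even negative) and no admissible $r'$ exists, so no contradiction is available from $A$ alone. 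The paper does not take a WLOG smaller side for exactly this reason: it shows that if the induction cannot be applied on $A$ then $\alpha r<(r+1)(\lceil\alpha\rceil-1)$, derives the symmetric inequality $\beta r<(r+1)(\lceil\beta\rceil-1)$ for $B$, and sums the two against $\alpha+\beta=r+1$ to reach the contradiction $(r+1)r<(r+1)r$. Your reduction to one side discards the half of the argument that handles the problematic range of $\vert A\vert$; also note the minimality being exploited is minimality of $r$ (the statement for all $r'<r$), not of the vertex count, so ``a counterexample on fewer vertices'' is not by itself a contradiction.
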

\begin{proof}
Suppose $A\sqcup B$ is a non-trivial $1$-cut with $\left|A\right|=\frac{\alpha N}{r+1}$ and $\left|B\right|=\frac{\beta N}{r+1}$,
	where $\alpha+\beta=r+1$. Our minimum degree condition $\delta(G_2) + \delta(G_3)>\frac{2N}{r+1}$ together with the non-triviality of the $1$-cut $A\sqcup B$ implies $1<\alpha,\beta<r$. Note that for any vertex $v\in A$, $d_{G_2[A]}(v)=d_{G_2}(v)\geq \delta(G_2)$ and $d_{G_3[A]}(v)=d_{G_3}(v)\geq \delta(G_3)$. Further, we have 
	\begin{align*}
	\frac{d_{G_1[A]}\left(v\right)}{\left|A\right|}\geq \frac{d_{G_1}(v)-(N-\vert A\vert)}{\vert A\vert }>\frac{\left(1-\frac{1}{r}\right)N-\left(N-\frac{\alpha N}{r+1}\right)}{\frac{\alpha N}{r+1}}=1-\frac{r+1}{\alpha r}.
	\end{align*}
	
Suppose that $\alpha r\geq (r+1)\left(\lceil \alpha \rceil -1\right)$. The inequality above then tells us that $\delta(G_1[A])>\left(1-\frac{1}{\lceil \alpha \rceil -1}\right)\vert A\vert$. Since the statement of Theorem~\ref{theorem: forcing min degrees} holds for $r'=\lceil \alpha\rceil-1<r$ and since $\mathbf{G}[A]$ is a  Gallai colouring template, this implies that 
	\[ \delta(G_2) + \delta(G_3) \leq \delta(G_2[A]) + \delta(G_3[A]) \leq \frac{2\vert A\vert}{\lceil\alpha \rceil }\leq \frac{2N}{r+1},\] 
contradicting our assumption on $\mathbf{G}$. Thus we must have $\alpha r< (r+1)\left(\lceil \alpha \rceil -1\right)$ and, by a symmetric argument for  $v\in B$, $\beta r< (r+1)\left(\lceil \beta \rceil -1\right)$. Summing these two inequalities, and using $\alpha+\beta=r+1$, we get a contradiction:
\begin{align*}
(r+1)r=(\alpha+\beta) r<(r+1)\left(\lceil \alpha\rceil +\lceil \beta \rceil -2\right) \leq (r+1)r.
\end{align*}
\end{proof}
\noindent Next we prove another useful result, namely that whenever a vertex is incident with at least one bi-chromatic edge in colours $ij$, it is incident to many such edges. To make this more precise, we define the \emph{minimum positive degree} $\delta^+(G)$ of a non-empty graph $G$ to be the minimum of $d_G(v)$ over all non-isolated vertices $v$.
\begin{lemma}\label{lem:  intersections, if large, are non-empty} 
	For every pair $i,j$ of distinct colours from $\{1,2,3\}$, either $\delta^+(G_i\cap G_j)>\frac{(r-1)}{r(r+1)}N$ or $G_i\cap G_j =\emptyset$.
\end{lemma}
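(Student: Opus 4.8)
The plan is to suppose that $G_i \cap G_j \neq \emptyset$ and fix a non-isolated vertex $v$ of $G_i \cap G_j$, so there is a bi-chromatic edge $\{v,w\} \in G_i \cap G_j$; I want to show $d_{G_i \cap G_j}(v) > \frac{r-1}{r(r+1)}N$. The key idea is that the bi-chromatic edge $\{v,w\}$ forces a "forbidden overlap" structure: for the third colour $k$ (where $\{i,j,k\}=\{1,2,3\}$), the absence of rainbow triangles means that $N_{G_k}(v)$ and $N_{G_k}(w)$ are disjoint (otherwise a common neighbour $u$ would give a rainbow triangle on $\{u,v,w\}$ using colour $i$ on one of $\{v,w\}$'s edges, colour $j$ on the other, and colour $k$ on $\{v,w\}$ — one needs to be a little careful about which of the two colours goes on $\{u,v\}$ versus $\{u,w\}$, but since $\{v,w\}$ carries both colours $i$ and $j$ simultaneously, either assignment works). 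Hence $d_{G_k}(v) + d_{G_k}(w) \leq N$, and since both are at least $\delta(G_k)$, this is where the various minimum-degree bounds get used.

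Next I would split into cases according to which colour $k$ is. If $k = 1$, then $d_{G_1}(v) + d_{G_1}(w) \leq N$ contradicts $\delta(G_1) > \frac{r-1}{r}N$ outright (as $\frac{2(r-1)}{r}N > N$ for $r \geq 3$), so in fact no bi-chromatic edge in colours $\{2,3\}$ can even exist — but that is Lemma~\ref{lem: mindeg, no 23 edge}, proved separately, so here I only need the cases where $1 \in \{i,j\}$, say $j = 1$ and $i = c \in \{2,3\}$, with $k = \overline{c}$. Then disjointness of $N_{G_{\overline c}}(v)$ and $N_{G_{\overline c}}(w)$ gives $d_{G_{\overline c}}(w) \leq N - d_{G_{\overline c}}(v) \leq N - \delta(G_{\overline c})$. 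Now I use the neighbourhood of $v$ in the bi-chromatic colour $c$ more carefully: every vertex $u \in N_{G_c}(v)$ with $u \notin \{v,w\}$ — combined with the edge $\{v,w\} \in G_1 \cap G_c$ — means that to avoid a rainbow triangle on $\{u,v,w\}$ we cannot have $\{u,w\} \in G_{\overline c}$ and simultaneously have the remaining two edges take colours in $\{1,c\}$; since $\{v,w\}$ already has both colours $1$ and $c$ available and $\{v,u\} \in G_c$, a colour-$\overline{c}$ edge $\{u,w\}$ would complete a rainbow triangle. Therefore $N_{G_c}(v) \setminus \{w\}$ is disjoint from $N_{G_{\overline c}}(w)$.

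Combining: $N_{G_c}(v)$ and $N_{G_{\overline c}}(w)$ are essentially disjoint subsets of $V$, so $d_{G_c}(v) + d_{G_{\overline c}}(w) \leq N + O(1)$; but I also know $w \in N_{G_c}(v) \cap N_{G_{\overline c}}(\text{?})$... more usefully, I should instead directly bound $d_{G_c \cap G_1}(v)$. Here is the cleaner route: let $u \in N_{G_c}(v)$. I claim $u$ is "blocked" from being a non-neighbour of $v$ in $G_1$ as follows — actually the sharp statement is that $N_{G_c}(v) \cap N_{G_{\overline c}}(w) \subseteq \{v,w\}$ as shown, and symmetrically $N_{G_{\overline c}}(v) \cap N_{G_c}(w) \subseteq \{v,w\}$; and also $N_{G_c}(v)$ is disjoint from $N_{G_c}(w)$ is NOT forced. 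So the count I actually get is $d_{G_c}(v) + d_{G_{\overline c}}(w) \leq N + 2$. Using $d_{G_{\overline c}}(w) \geq \delta(G_{\overline c}) > \frac{r-1}{r(r+1)}N$ from~\eqref{eq: ub and lb bound on delta(G_i)}, I get $d_{G_c}(v) \leq N + 2 - \frac{r-1}{r(r+1)}N$, which is an upper bound, not the lower bound I want — so this is not quite the right pairing.

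The right move, I expect, is to bound $d_{G_c \cap G_1}(v)$ from below by comparing $N_{G_c}(v)$ and $N_{G_1}(v)$ directly inside $V$: we have $|N_{G_c}(v)| = d_{G_c}(v) \geq \delta(G_c) > \frac{r-1}{r(r+1)}N$ and $|N_{G_1}(v)| > \frac{r-1}{r}N$, so by inclusion–exclusion $d_{G_c \cap G_1}(v) \geq |N_{G_c}(v)| + |N_{G_1}(v)| - N > \frac{r-1}{r(r+1)}N + \frac{r-1}{r}N - N$. A quick check shows $\frac{r-1}{r(r+1)} + \frac{r-1}{r} - 1 = \frac{(r-1) + (r-1)(r+1) - r(r+1)}{r(r+1)} = \frac{r-1 + r^2 - 1 - r^2 - r}{r(r+1)} = \frac{-2}{r(r+1)}$, which is negative — so naive inclusion–exclusion inside $V$ is too weak, and I genuinely need the rainbow-triangle structure to do better. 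This is the main obstacle: I must show that $N_{G_c}(v)$ and $N_{G_1}(v)$ overlap far more than inclusion–exclusion guarantees, by arguing that almost every colour-$c$ neighbour of $v$ is also a colour-$1$ neighbour of $v$. I would establish this by fixing the bi-chromatic edge $\{v,w\}$ and showing that if $u \in N_{G_c}(v) \setminus N_{G_1}(v)$, then the edge $\{v,u\}$ lies only in colour-$c$ (and possibly $\overline c$) classes, and then playing $u$, $v$, $w$ against further vertices using $\delta(G_1)$'s largeness to find a rainbow triangle — effectively, the $G_1$-neighbourhoods are so large that $u$ and $w$ share a $G_1$-neighbour $x$, and then $\{u, w, x\}$ or $\{u,v,w\}$-type configurations with the two colours on $\{v,w\}$ yield a rainbow triangle unless $\{v,u\} \in G_1$ after all. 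Carrying out this case analysis cleanly, keeping track of which of the two bi-chromatic colours on $\{v,w\}$ is deployed on which triangle edge, is the technical heart of the proof; once it is done, the bound $d_{G_c \cap G_1}(v) > \frac{r-1}{r(r+1)}N$ drops out, and the symmetric roles of $i$ and $j$ (one of which is $1$, after Lemma~\ref{lem: mindeg, no 23 edge} disposes of the $\{2,3\}$ case) finish the lemma.
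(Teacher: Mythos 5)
There is a genuine gap: the one counting idea that makes the lemma work is missing, and the step you explicitly defer is the whole proof. You correctly notice that naive inclusion--exclusion, $d_{G_1\cap G_c}(v)\geq d_{G_c}(v)+d_{G_1}(v)-N$, is too weak, but the fix you then sketch (showing by an unspecified case analysis that almost every $G_c$-neighbour of $v$ is also a $G_1$-neighbour) is left unexecuted and is not clearly true in the form you state it. The paper closes the gap with a single observation you never make: if $w\in N_{G_i\cap G_j}(v)$, then absence of rainbow triangles forces $N_{G_k}(w)$ to be disjoint from $N_{G_i\cup G_j}(v)$ --- any $x$ lying in both sets gives the rainbow triangle $\{v,w,x\}$, where the bi-chromatic edge $vw$ is taken in whichever of the colours $i,j$ is not used on $vx$, and $wx$ is taken in colour $k$. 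This improves the union bound to $\vert N_{G_i\cup G_j}(v)\vert\leq N-\delta(G_k)$, and then inclusion--exclusion immediately yields $d_{G_i\cap G_j}(v)\geq \delta(G_1)+\delta(G_2)+\delta(G_3)-N>\frac{r-1}{r(r+1)}N$, uniformly for all three pairs $\{i,j\}$. Without that strengthened union bound (or an equivalent), your argument does not reach the stated lower bound.

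Two further problems. First, your opening disjointness claim is false: if $vw\in G_i\cap G_j$ and $u\in N_{G_k}(v)\cap N_{G_k}(w)$, the triangle $\{u,v,w\}$ need not be rainbow, since both $uv$ and $uw$ may lie only in $G_k$, and the single edge $vw$ can be assigned only one colour; the correct disjointness is the asymmetric one above ($N_{G_k}(w)$ against $N_{G_i\cup G_j}(v)$). In particular your ``$k=1$'' argument does not show that bi-chromatic $23$-edges cannot exist. Second, deferring the pair $\{i,j\}=\{2,3\}$ to Lemma~\ref{lem: mindeg, no 23 edge} is circular: the paper's proof of that lemma (and of its $r=2$ analogue) repeatedly invokes the present lemma for precisely the pair $\{2,3\}$, through the bound $\delta^+(G_2\cap G_3)>\frac{r-1}{r(r+1)}N$. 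So your proof must treat all three pairs directly, which the paper's one-line argument does.
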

\begin{proof}
Suppose $u$ is a vertex with $d_{G_i\cap G_j}(u)>0$. Let $\{k\}=\{1,2,3\}\setminus \{i,j\}$, and let $v$ be any vertex in $N_{G_i\cap G_j}(u)$. Since $\mathbf{G}$ is a  Gallai colouring template, $N_{G_k}(v)$ is disjoint from $N_{G_i\cup G_j}(u)$. In particular,
\begin{align*}
d_{G_i\cap G_j}(u)= \vert N_{G_i}(u)\vert + \vert N_{G_j}(u)\vert -\vert N_{G_i\cup G_j}(u)\vert &\geq \delta(G_i)+\delta(G_j) - \left(N- \vert N_{G_k}(v)\vert\right)\\
&\geq \delta(G_1)+\delta(G_2)+\delta(G_3)-N> \frac{(r-1)}{r(r+1)}N.
\end{align*}
\end{proof}
\begin{remark}\label{remark: also works for r=3}
	Both Lemma~\ref{lem:1-cut} and Lemma~\ref{lem:  intersections, if large, are non-empty} also hold for $r=2$ (since as we observed in the previous subsection, the case $r=1$ of Theorem~\ref{theorem: forcing min degrees} follows from Proposition~\ref{prop: trivial min degree bound}).
\end{remark}
\noindent We are now in a position to prove our first key structural lemma, namely that there are no bi-chromatic edges in colours $23$ in $\mathbf{G}$. 
\begin{lemma}\label{lem: mindeg, no 23 edge}
$G_2\cap G_3 =\emptyset$.	
\end{lemma}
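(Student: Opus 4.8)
\medskip
\noindent\textit{Proof plan.} We argue by contradiction. Suppose $G_2\cap G_3\neq\emptyset$. By Lemma~\ref{lem:  intersections, if large, are non-empty} (applied with $\{i,j\}=\{2,3\}$) every vertex incident with a bi-chromatic $23$ edge satisfies $d_{G_2\cap G_3}(v)>\frac{r-1}{r(r+1)}N$. Fix $xy\in G_2\cap G_3$ and set $X:=N_{G_2\cap G_3}(x)$, so that $y\in X$ and $\vert X\vert>\frac{r-1}{r(r+1)}N$. The argument rests on two elementary consequences of rainbow-freeness. \emph{(a)} $X$ is an independent set in $G_1$: if $z,z'\in X$ with $zz'\in G_1$, then since $xz\in G_2$ and $xz'\in G_3$, the triangle $\{x,z,z'\}$ would be rainbow. \emph{(b)} For every $z$ with $xz\in G_2\cap G_3$ we have $N_{G_1}(x)\cap N_{G_2\cup G_3}(z)=\emptyset$ and $N_{G_1}(z)\cap N_{G_2\cup G_3}(x)=\emptyset$: any common vertex $w$ would complete a rainbow triangle on $\{x,z,w\}$, with the bi-chromatic edge $xz$ supplying whichever of the colours $2,3$ is not used on the other two sides. (One also checks that if $xz\in G_1$ for some $z\in X$, i.e.\ $xz$ is rainbow, then $z$ sends no edge of any colour to $X\setminus\{z\}$, and this degenerate case is disposed of separately and more quickly.)

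Applying \emph{(b)} with all $z\in X$, we get $N_{G_2\cup G_3}(z)\subseteq M:=V\setminus N_{G_1}(x)$ and $N_{G_1}(z)\subseteq Z:=V\setminus N_{G_2\cup G_3}(x)$ for every $z\in X$. Proposition~\ref{prop: trivial min degree bound} (using $\delta(G_3)>0$ and $\delta(G_1)>\frac{r-1}{r}N$) gives $\vert M\vert<\frac Nr$; since $N_{G_2}(z)$ and $N_{G_3}(z)$ both lie in $M$, the key inequality $\delta(G_1)+2\delta(G_c)>N$ together with $\delta(G_2)+\delta(G_3)>\frac{2N}{r+1}$ forces $d_{G_2\cap G_3}(z)>\frac{r-1}{r(r+1)}N$ for \emph{every} $z\in X$, so the $23$-structure propagates across $X$. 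A further triangle argument shows that if $w\notin X$ has a $G_1$-neighbour $z\in X$ then $wx\notin G_2\cup G_3$ (otherwise colour $wz$ with colour $1$, $xz$ with one of $2,3$, and $wx$ with the remaining colour). Hence $\bigcup_{z\in X}N_{G_1}(z)\subseteq Z$, so $\vert Z\vert=N-\vert N_{G_2\cup G_3}(x)\vert\ge\delta(G_1)>\frac{r-1}{r}N$, and for every $w\in Z$, $d_{G_1[Z]}(w)\ge d_{G_1}(w)-\vert N_{G_2\cup G_3}(x)\vert$; combined with $\vert N_{G_2\cup G_3}(x)\vert<\frac Nr$ this yields $\delta(G_1[Z])>\bigl(1-\tfrac{1}{r-1}\bigr)\vert Z\vert$. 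Thus $\mathbf{G}[Z]$ behaves like a counterexample ``one level down'' in $r$.

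The endgame is to turn this into a contradiction, and this is the step I expect to be the main obstacle: the difficulty is arithmetical, the naive estimates falling short of what is needed by a term of order $\tfrac{N}{r(r+1)}$, so one must genuinely exploit the \emph{strictness} of $\delta(G_1)>\frac{r-1}{r}N$ (the Tur\'an-type constructions preceding the theorem show the lemma is false if this is weakened to $\delta(G_1)\ge\frac{r-1}{r}N$). The plan is to run the induction on $\mathbf{G}[Z]$ (or on $\mathbf{G}[Z\setminus\{x\}]$, to avoid the artefact that $x$ is isolated in $(G_2\cup G_3)[Z]$ when $N_{G_2\cup G_3}(x)=X$). If the two residual colour classes have positive minimum degree, then by minimality of $r$ the conclusion of Theorem~\ref{theorem: forcing min degrees} holds for $r-1$, giving an upper bound of order $\tfrac{2\vert Z\vert}{r}$ on the sum of their minimum degrees; one contradicts this by bounding, for each $w\in Z$, the degree losses $d_{G_c}(w)-d_{G_c[Z]}(w)$ — a triangle argument forces the lost neighbours of $w$ inside $N_{G_2\cup G_3}(x)$ to be \emph{pure} colour-$c$ neighbours of $x$, so the total colour-$2$-plus-colour-$3$ loss is governed by $d_{G_2}(x)+d_{G_3}(x)-2\vert X\vert$ — and feeding this into $\delta(G_1)+2\delta(G_c)>N$. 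If instead some vertex of $Z$ has all of its colour-$2$ (or colour-$3$) edges directed into $N_{G_2\cup G_3}(x)$, then that colour class is ``broken off'' over $Z$ and one locates a non-trivial $1$-cut, contradicting Lemma~\ref{lem:1-cut}. Making the arithmetic in this final step close cleanly, using the strict degree hypothesis, is the delicate heart of the lemma.
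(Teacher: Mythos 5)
Your first two paragraphs are sound: with $X=N_{G_2\cap G_3}(x)$ one does get that $X$ is independent in $G_1$, that $N_{G_1}(z)\subseteq Z:=V\setminus N_{G_2\cup G_3}(x)$ for every $z\in X$, that $\vert N_{G_2\cup G_3}(x)\vert<\frac{N}{r}$, and hence that $\delta(G_1[Z])>\left(1-\frac{1}{r-1}\right)\vert Z\vert$. But the argument stops exactly where the lemma has to be proved: the ``endgame'' is a plan, not a proof, and by your own accounting the comparison of $\delta(G_2[Z])+\delta(G_3[Z])$ with the inductive bound $\frac{2\vert Z\vert}{r}$ falls short (indeed $\frac{2N}{r+1}-(N-\vert Z\vert)+d_{G_2\cap G_3}(x)$ is smaller than $\frac{2\vert Z\vert}{r}$ by a term of order $\frac{N}{r^2}$ even under the most favourable estimates). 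Worse, the two repairs you sketch do not work as stated. First, the claim that a lost colour-$c$ neighbour $u\in N_{G_2\cup G_3}(x)$ of a vertex $w\in Z$ must be a \emph{pure} colour-$c$ neighbour of $x$ requires the third side $xw$ to be an edge of $G_1$; but $w\in Z$ only says $xw\notin G_2\cup G_3$, so $\{x,u,w\}$ need not be a triangle in any colours and no rainbow contradiction arises (and restricting to $w\in Z\cap N_{G_1}(x)$ destroys the minimum-degree control on $G_1[Z]$). Second, if some vertex of $Z$ has all of its colour-$2$ (or colour-$3$) edges into $N_{G_2\cup G_3}(x)$, this does not produce a non-trivial $1$-cut of $\mathbf{G}$: Lemma~\ref{lem:1-cut} concerns bipartitions of the whole vertex set with no $2$- or $3$-edges across them, and one vertex of $Z$ losing a colour inside $Z$ yields nothing of that kind. (The ``degenerate'' rainbow-edge case is likewise asserted to be quick but never handled.) So the contradiction is never actually derived.

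For comparison, the paper takes a genuinely different, global route which is precisely designed to avoid this arithmetic shortfall: it sets $X$ to be the set of \emph{all} vertices meeting a $23$-bichromatic edge, $Y$ the vertices sending a $2$- or $3$-edge to $X$, and $Z$ the rest; it shows $Z=\emptyset$ using Lemma~\ref{lem:1-cut}, then that $\vert X\vert\geq r$ and that $G_1[X]$ is $K_r$-free (by growing a $K_{r+1}$ in $G_1$ and finding a vertex sending three $2$/$3$-edges into it), and finally contradicts $\delta(G_1)$ by combining Tur\'an's theorem on $G_1[X]$ with the bound $\vert N_{G_1}(y)\cap X\vert<\vert X\vert-\frac{r-1}{r(r+1)}N$ for $y\in Y$; the resulting quadratic in $\vert X\vert/N$ has no real roots. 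Some counting argument of that strength is what must replace the step you flagged; as it stands, your proposal has a genuine gap at its decisive point.
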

\begin{proof}
Set $X$ to be the set of vertices in $V$ incident with a bi-chromatic edge in colours $23$ from the  graph $G_2\cap G_3$. Suppose for contradiction $X\neq \emptyset$. Let $Y$ denote the set of $y\in V\setminus X$ such that $y$ sends at least one edge in colour $2$ or $3$ to $X$. Finally, let $Z:= V\setminus (X\cup Y)$ denote the rest of the vertices in $V$. 
\begin{claim}\label{claim: Z is empty}
	$Z=\emptyset$
\end{claim}
\begin{proof}
Suppose this is not the case. Since $\mathbf{G}$ contains no non-trivial $1$-cut (by Lemma~\ref{lem:1-cut}), there exists $z\in Z$ sending an edge in a colour $c\in\{2,3\}$ to some vertex in $ X\cup Y$; further, by construction of $Y$, this vertex $y$ must lie in $Y$. By construction of $Y$ again, there exists $x\in X$ such that $xy\in G_{i}$ for some $i\in \{2,3\}$.

By construction of $X$ and Lemma~\ref{lem:  intersections, if large, are non-empty}, $N_{G_2\cap G_3}(x)$ is a subset of $X$ of size at least $\delta^+(G_2\cap G_3)>\frac{(r-1)}{r(r+1)}N$. By construction of $Z$ and~\eqref{eq: ub and lb bound on delta(G_i)}, $N_{G_{\overline{c}}}(z)$ is a subset of $Y\cup Z$ of size at least $\delta(G_{\overline{c}})>\frac{r-1}{r(r+1)}N$.  Finally, as $\mathbf{G}$ contains no rainbow triangle, both of these sets are disjoint from $N_{G_1}(y)$. Since
\begin{align*}
\vert N_{G_1}(y)\vert + \vert N_{G_{\overline{c}}}(z)\vert +\vert N_{G_2\cap G_3}(x)\vert > \delta(G_1)+\frac{2(r-1)}{r(r+1)N}>N,
\end{align*}
 (since $r\geq 3$), this yields a contradiction.	
\end{proof}
\begin{claim}\label{claim: X is not too small}
	$\vert X\vert \geq r$.
\end{claim}
\begin{proof}
Suppose $\vert X\vert \leq r-1$. Since $X$ is non-empty, we have by Lemma~\ref{lem:  intersections, if large, are non-empty} that $\vert X\vert \geq 1+ \delta^+(G_2\cap G_3)> 1+\frac{r-1}{r(r+1)}N$. Thus $\vert X\vert \leq r-1$ implies $N< \frac{(r-2)r(r+1)}{r-1}< r(r+1)$. Observe that for such values of $N$, $\lceil (1-\frac{1}{r})N\rceil+\lceil\frac{N}{r+1}\rceil \geq N$. 

Consider any edge $xx'\in G_3$. Since $\mathbf{G}$ contains no rainbow triangle, $N_{G_1}(x)$ must be disjoint from $N_{G_2}(x')$. As $\delta(G_1)+\delta(G_2)$ is at least $N$ by the observation above, this implies by the pigeon-hole principle that  $xx' \in G_1 \cap G_2 \cap G_3$. In particular, it follows that $G_3 \subseteq G_2$, and hence every vertex must lie in $X$. Thus $\vert X\vert =N$. Since $\vert X\vert\leq r-1$, this implies that $N-1\geq \delta(G_1)>(1-\frac{1}{r})N\geq N-1$, a contradiction.
\end{proof}
\begin{claim}\label{claim: G1 is Kr-free}
$G_1[X]$ is $K_r$-free.	
\end{claim}
\begin{proof}
Suppose for a contradiction that$\{x_i: \ i \in [r]\}$ is a set of $r$ distinct vertices from $X$ inducing a copy of $K_r$ in $G_1[X]$. By the minimum degree condition for colour $1$, $\sum_{i=1}^r \left(d_{G_1}(x_i)-(r-1)\right)> (r-1)(N-r)$, whence by the pigeon-hole principle there exists a vertex $u\in V=X\cup Y$ such that $\{x_i: \ i \in [r]\}\cup \{u\}$ induces a copy of $K_{r+1}$ in $G_1$.

Now, by the minimum degree condition for colours $2$ and $3$, we have 
\[d_{G_2}(u)+d_{G_3}(u)+\sum_{i=1}^r\left(d_{G_2}(x_i)+d_{G_3}(x_i)\right)> 2N,\] whence there exists $v$ sending at least three edges in colour $2$ or $3$ into $\{x_i: \ i\in [r]\}\cup\{u\}$. Since $\mathbf{G}$ contains no rainbow triangle, it follows that these three or more edges must all be in the same colour $c$. In particular, we may assume without loss of generality that $x_1v$ and $x_2v$ are both in $G_c$. 

Since $\mathbf{G}$ is Gallai, the sets $N_{G_2\cap G_3}(x_1)$, $N_{G_2\cap G_3}(x_2)$ and $N_{G_1}(v)$ are pairwise disjoint subsets of the $N$-set $V$. Further, by the definition of $X$ and by Lemma~\ref{lem:  intersections, if large, are non-empty}, the first two of these sets are non-empty and have size at least $\frac{r-1}{r(r+1)}N$, while our minimum degree assumption guarantees that the last set has size at least  $\delta(G_1)>\frac{r-1}{r}N$. Now for $r \geq 3$, we have
\[\frac{2(r-1)}{r(r+1)}N+\delta(G_1) >N,\]
contradicting the fact our three sets are pairwise disjoint subsets of $V$. 
\end{proof}
Now for every $y\in Y$, there exists by the construction of $Y$ a vertex $x\in X$ with $xy\in G_j$ for some colour $j\in \{2,3\}$. Since $\mathbf{G}$ contains no rainbow triangle, the sets $N_{G_1}(y)$ and $N_{G_2\cap G_3}(x)$ (the latter of which is a subset of $X$ by construction) are disjoint. In particular, by Lemma~\ref{lem:  intersections, if large, are non-empty} we have
\[\vert N_{G_1}(y)\cap X\vert \leq \vert X\vert -\vert N_{G_2\cap G_3}(x)\vert <\vert X\vert -\frac{r-1}{r(r+1)}N.\] 
It follows that $\vert G_1[X,Y]\vert <(N-\vert X\vert )\left( \vert X\vert -\frac{r-1}{r(r+1)}N\right)$. Further, since $\vert X\vert \geq r$ by Claim~\ref{claim: X is not too small}, we can apply Tur\'an's theorem~\cite{Turan41} to the $K_r$-free graph $G_1[X]$ to obtain the upper bound $\vert G_1[X]\vert < \frac{r-2}{r-1}\frac{\vert X\vert ^2}{2}$. However, by the minimum degree assumption for colour $1$,
\begin{align*} 
\frac{r-1}{r}N\vert X\vert <\delta(G_1)\vert X\vert \leq 2\vert G_1[X]\vert +\vert G_1[X,Y]\vert <\frac{r-2}{r-1}\vert X\vert ^2 +(N-\vert X\vert )\left( \vert X\vert -\frac{r-1}{r(r+1)}N\right).
\end{align*}
Setting $\vert X\vert =\alpha N$ and rearranging, we get 
 $\alpha^2 -\frac{2(r-1)}{r+1}\alpha +\frac{(r-1)^2}{r(r+1)}<0$, which has no solution when $r\geq 1$, so this last inequality yields the desired contradiction. 
\end{proof}
\begin{corollary}\label{cor: mindeg, no rainbow}
	There are no rainbow edges in $\mathbf{G}$.\qedhere
\end{corollary}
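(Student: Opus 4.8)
\textbf{Proof proposal for Corollary~\ref{cor: mindeg, no rainbow}.} The plan is to deduce this immediately from Lemma~\ref{lem: mindeg, no 23 edge}. Recall from the notation subsection that a pair $xy \in V^{(2)}$ is a rainbow edge precisely when $xy \in \bigcap_{i=1}^3 G_i$. In particular, any rainbow edge of $\mathbf{G}$ would lie in $G_2 \cap G_3$. But Lemma~\ref{lem: mindeg, no 23 edge} asserts $G_2 \cap G_3 = \emptyset$, so no such pair can exist, and $\mathbf{G}$ has no rainbow edges.

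There is essentially no obstacle here: the corollary is a direct logical consequence of the preceding lemma, recorded separately only because the absence of rainbow edges (rather than merely the absence of bi-chromatic $23$-edges) is the form in which the fact will be invoked later — for instance when analysing the interaction of $G_1$ with the bipartition of $G_2$ in Lemmas~\ref{lem: mindeg, G2 bipartite} and~\ref{lemma: G3 structure}. One could equivalently phrase it as: every edge of $\mathbf{G}$ lies in at most two of the three colour classes, and in fact, combined with Lemma~\ref{lem: mindeg, no 23 edge}, every edge lies in $G_1$ together with at most one of $G_2, G_3$, so that all bi-chromatic edges are in colours $12$ or $13$.

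Since the only input is Lemma~\ref{lem: mindeg, no 23 edge}, which has already been established under the hypotheses of Theorem~\ref{theorem: r >2 case} (and, by Remark~\ref{remark: also works for r=3}-type reasoning, also in the $r=2$ setting where the analogous statement Lemma~\ref{lem: mindeg, no 23 edge r=2} is proved), no further case analysis or calculation is needed. The proof is therefore a single line.
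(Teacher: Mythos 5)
Your proof is correct and is exactly the argument the paper intends: the corollary is an immediate consequence of Lemma~\ref{lem: mindeg, no 23 edge}, since a rainbow edge would lie in $G_1\cap G_2\cap G_3\subseteq G_2\cap G_3=\emptyset$.
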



	

\noindent With Lemma~\ref{lem: mindeg, no 23 edge} in hand, we now show that bi-chromatic edges in colours $1j$ are widespread.
\begin{lemma}\label{lem: either every bichromatic degree is zero or large}
Let $j\in \{2,3\}$. Then either every vertex in $V$ is incident with a bi-chromatic edge in colours $1j$ or none is.
\end{lemma}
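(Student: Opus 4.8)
The plan is to fix $j\in\{2,3\}$, write $\{k\}=\{2,3\}\setminus\{j\}$, and suppose for contradiction that the set $U$ of vertices incident with a bi-chromatic $1j$-edge is a non-empty proper subset of $V$. The first step is to understand the structure forced by a single bi-chromatic $1j$-edge. If $uv\in G_1\cap G_j$, then since $\mathbf{G}$ is Gallai the neighbourhood $N_{G_k}(u)$ is disjoint from $N_{G_1}(v)\cup N_{G_j}(v)$; combined with the minimum-degree bounds $\delta(G_1)>\frac{r-1}{r}N$ and the lower bound on $\delta(G_k)$ from~\eqref{eq: ub and lb bound on delta(G_i)}, this should force $N_{G_1}(v)$ and $N_{G_j}(v)$ to overlap substantially, i.e.\ $v$ itself has large bi-chromatic $1j$-degree. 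Indeed the same computation as in Lemma~\ref{lem:  intersections, if large, are non-empty} gives $d_{G_1\cap G_j}(v)>\frac{r-1}{r(r+1)}N$, so in particular $v\in U$ and $U$ is ``closed'' under $G_1\cap G_j$-adjacency. The real content is to show $U$ is also closed under taking $G_j$-neighbours and essentially under $G_1$-neighbours, which would let me build a non-trivial partition contradicting Lemma~\ref{lem:1-cut} (using that, by Lemma~\ref{lem: mindeg, no 23 edge}, $G_2$ and $G_3$ are edge-disjoint, so a $1j$-cut plus control of colour $k$ gives a $1$-cut).

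Here is the mechanism I would use. Take $u\in U$ with a bi-chromatic partner, so $N_{G_1\cap G_j}(u)$ has size $>\frac{r-1}{r(r+1)}N$, and suppose $w\notin U$. Consider a $G_j$-edge or $G_1$-edge from $w$ to $U$, say $wx$ with $x\in U$ chosen to also have large $N_{G_1\cap G_j}(x)$. Since $w\notin U$, every $G_j$-edge and every $G_1$-edge at $w$ that reaches $U$ is ``mono-chromatic'' at $w$ in a controlled way; the absence of rainbow triangles forces $N_{G_k}(w)$ (or $N_{G_j}(w)$, depending on the colour of $wx$) to avoid $N_{G_1\cap G_j}(x)$. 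Adding up three pairwise-disjoint neighbourhoods — $N_{G_1\cap G_j}(x)$, a large colour-$k$ or colour-$1$ neighbourhood of $w$, and a colour-$1$ neighbourhood of a third vertex — and using $\delta(G_1)+2\cdot\frac{r-1}{r(r+1)}N>N$ for $r\ge3$ (exactly the inequality~\eqref{eq: key ineq r at least 3} that is invoked repeatedly above, e.g.\ in Claim~\ref{claim: Z is empty}) should yield a contradiction, showing no such $w$ can send an edge in colour $1$ or $j$ into $U$. That means $V\setminus U$ sends no $G_1$- and no $G_j$-edges to $U$; since $G_2\cap G_3=\emptyset$, the colour-$k$ edges across are harmless for forming a $1$-cut only if we also rule out colour $k$, so in fact I would argue directly that $U\sqcup(V\setminus U)$ is a $1$-cut — any colour-$2$ or colour-$3$ edge across would be a colour-$j$ or colour-$k$ edge, and a colour-$k$ edge from $w\notin U$ to $x\in U$ is handled by the same disjoint-neighbourhoods count (using $N_{G_1\cap G_j}(x)$, $N_{G_j}(x)$, and $N_{G_1}(w)$, all pairwise disjoint by the Gallai condition since $xw\in G_k$). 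This contradicts Lemma~\ref{lem:1-cut}, unless $U$ is empty or all of $V$.

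The main obstacle I anticipate is the bookkeeping in the middle step: there are several cases for the colour of the edge joining $V\setminus U$ to $U$ (colour $1$, colour $j$, or colour $k$), and in each case I must pick the right triple of neighbourhoods whose pairwise disjointness follows from the Gallai property, and verify the sum of their guaranteed sizes exceeds $N$. The quantitative slack is real but not generous — it hinges on $r\ge 3$ and on the lower bound $\delta(G_k)>\frac{r-1}{r(r+1)}N$ from~\eqref{eq: ub and lb bound on delta(G_i)} together with $\delta^+(G_1\cap G_j)>\frac{r-1}{r(r+1)}N$ from Lemma~\ref{lem:  intersections, if large, are non-empty}, so I will need to be careful that the vertex $x\in U$ I choose genuinely has a bi-chromatic $1j$-partner (which is automatic once one shows, as a preliminary, that every vertex of $U$ does — this is exactly what ``either every bi-chromatic degree is zero or large'' is asserting, so the lemma is really proved by first establishing the ``large'' dichotomy pointwise via the closure argument, then using Lemma~\ref{lem:1-cut} to rule out a proper non-empty $U$).
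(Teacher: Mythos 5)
Your overall strategy coincides with the paper's: set $U=\{v:\ d_{G_1\cap G_j}(v)>0\}$, show that $U\sqcup(V\setminus U)$ is a $1$-cut, and invoke Lemma~\ref{lem:1-cut} together with Lemma~\ref{lem:  intersections, if large, are non-empty}. However, the execution as you describe it has genuine gaps. First, your treatment of a colour-$k$ edge $wx$ with $w\notin U$, $x\in U$ uses the triple $N_{G_1\cap G_j}(x)$, $N_{G_j}(x)$, $N_{G_1}(w)$ and claims they are pairwise disjoint by the Gallai condition; but $N_{G_1\cap G_j}(x)\subseteq N_{G_j}(x)$, so they are not disjoint at all. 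The working triple is $N_{G_1\cap G_j}(x)$, $N_{G_1}(w)$, $N_{G_j}(w)$, and the crucial disjointness of the last two comes not from the Gallai property but from the fact that $w\notin U$, i.e.\ $w$ is incident with no bi-chromatic $1j$-edge; this is exactly how the paper argues. Second, for a colour-$j$ edge $wx$ across the cut, the Gallai condition only hands you two disjoint sets (essentially $N_{G_1}(x)$ and $N_{G_k}(w)$), whose guaranteed sizes sum to far less than $N$; you say a colour-$1$ neighbourhood of ``a third vertex'' finishes the count but never identify that vertex, and no choice works on Gallai grounds alone (you would need a vertex adjacent to $x$ in colour $k$ \emph{and} to $w$ in colour $j$, which need not exist). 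The paper's device is an ordering you have reversed: first kill the colour-$k$ edges across the cut, which forces every $G_k$-neighbour $x'$ of $x$ to lie in $U$ and $N_{G_k}(w)$ to lie in $V\setminus U$; then $N_{G_1}(x)$, $N_{G_k}(w)$ and $N_{G_1\cap G_j}(x')$ are pairwise disjoint, with the disjointness of the last two coming from their lying on opposite sides of the cut rather than from the Gallai condition. Doing colour $j$ before colour $k$, as you propose, leaves this step without a mechanism.

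Finally, your plan to show that $w\notin U$ sends no colour-$1$ edges into $U$ is both unnecessary (a $1$-cut only forbids colours $2$ and $3$ across) and unobtainable by the argument you sketch: if $wx\in G_1$, a vertex in $N_{G_j}(w)\cap N_{G_1\cap G_j}(x)$ spans a triangle using only colours $1$ and $j$, so no rainbow triangle is forced and the claimed disjointness fails; moreover, when both $U$ and $V\setminus U$ are non-empty, the condition $\delta(G_1)>\bigl(1-\frac{1}{r}\bigr)N$ actually forces colour-$1$ edges across, so this part of the plan should simply be dropped. With the colour-$k$-first ordering, the correct triples, and the two non-Gallai sources of disjointness made explicit, your outline becomes the paper's proof; as written, the middle step does not go through.
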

\begin{proof}
Assume without loss of generality $j=2$, and set $X=\{v: \ d_{G_1\cap G_2}(v)>0\}$ and $Y=V\setminus X$. Our goal is to show that $X\sqcup Y$ is a $1$-cut, which by Lemma~\ref{lem:1-cut} would imply that either every vertex is incident with an edge from $G_1\cap G_2$ (if $Y=\emptyset$) or none is (if $X=\emptyset$). Note that if $X\neq \emptyset$, then by Lemma~\ref{lem:  intersections, if large, are non-empty}, $\delta^+(G_1\cap G_2)>\frac{r-1}{r(r+1)}N$.
\begin{claim}
	$G_3[X,Y]=\emptyset$
\end{claim}
\begin{proof}
Suppose there exist $x\in X$ and $y\in Y$ are such that $xy\in G_3$.  Since $\mathbf{G}$ contains no rainbow triangle, we have that $N_{G_1\cap G_2}(x)$ is disjoint from $N_{G_1}(y)\cup N_{G_2}(y)$. Further, since $y\in Y$, $d_{G_1\cap G_2}(y)=0$, and $N_{G_1}(y)$ and $N_{G_2}(y)$ must be pairwise disjoint. This then implies,
\[N\geq d_{G_1}(y)+ d_{G_2}(y)+d_{G_1\cap G_2}(x)\geq \delta(G_1)+\delta(G_2)+ \delta^+(G_1\cap G_2) > \left(\frac{r-1}{r} + \frac{2(r-1)}{r(r+1)}\right)N>N,\]
a contradiction.
\end{proof}

\begin{claim}
	$G_2[X,Y]=\emptyset$
\end{claim}
\begin{proof}	
Recall that by Lemma~\ref{lem:  intersections, if large, are non-empty}, every vertex in $X$ is incident with at least $\frac{r-1}{r(r+1)}N$ bi-chromatic edges in colours $12$. Now suppose there exist $x\in X$ and $y\in Y$ with $xy\in G_2$. Since by the previous claim we have $G_3[X,Y]=\emptyset$, there exists $x'\in X$ with $xx'\in G_3$. Now $N_{G_1\cap G_2}(x')\subseteq X$, and hence is disjoint from $N_{G_3}(y)\subseteq Y$. Further, $N_{G_1}(x)$ is disjoint from both of these sets (as otherwise we have a rainbow triangle). However,
\[d_{G_1}(x)+d_{G_3}(y)+d_{G_1\cap G_2}(x') \geq \delta(G_1)+\delta(G_3)+\delta^+(G_1\cap G_2)>\left(\frac{r-1}{r} + \frac{2(r-1)}{r(r+1)}\right)N>N,\]
which yields a contradiction.
\end{proof}
\noindent
Thus there are no edges in colour $2$ or $3$ between $X$ and $Y$, and $X\sqcup Y$ is a $1$-cut as required.  Applying Lemma~\ref{lem:1-cut} we have that $X=\emptyset$ or $Y=\emptyset$, proving the lemma.
\end{proof}
\noindent

Since $\delta(G_1)+\delta(G_2)+\delta(G_3)>N$, it follows from the pigeon-hole principle that every vertex in $V$ is incident with at least one bi-chromatic edge, which by Lemma~\ref{lem: mindeg, no 23 edge} must be in colours $12$ or $13$.  Thus at least one of $G_1\cap G_2$, $G_1\cap G_3$ is non-empty. 

In the remainder of the proof, we shall assume without loss of generality that $G_1\cap G_3$ is non-empty. In particular, it follows from Lemmas~\ref{lem: either every bichromatic degree is zero or large} and~\ref{lem:  intersections, if large, are non-empty} that
 \begin{align*}
\delta(G_1\cap G_3)=\delta^+(G_1\cap G_3)>\frac{(r-1)N}{r(r+1)}.
 \end{align*}
\noindent 
We are now ready to prove our second key structural lemma and show that $G_2$ is bipartite. 
\begin{lemma}\label{lem: mindeg, G2 bipartite}
	The graph $G_2$ is bipartite.
\end{lemma}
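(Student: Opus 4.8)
The plan is to argue by contradiction. Suppose $G_2$ is not bipartite, and let $C=v_1v_2\cdots v_mv_1$ be a shortest odd cycle in $G_2$, so that $m\geq 3$ is odd and, by minimality, $C$ is induced in $G_2$ (a chord would split $C$ into two shorter cycles, one of them odd). For each $i$ set $A_i:=N_{G_1\cap G_3}(v_i)$; since we have established $\delta(G_1\cap G_3)>\tfrac{(r-1)N}{r(r+1)}$, each $A_i$ has size greater than $\tfrac{(r-1)N}{r(r+1)}$, and by Corollary~\ref{cor: mindeg, no rainbow} every such pair $wv_i$ lies outside $G_2$. The key local fact I would record is: whenever $v_iv_j\in G_2$, the absence of a rainbow triangle forces $A_i$ to be disjoint from $N_{G_1}(v_j)\cup N_{G_3}(v_j)$ --- any common vertex $w$ has $wv_i\in G_1\cap G_3$ and $wv_j\in G_1\cup G_3$, and one checks that $\{v_i,v_j,w\}$ can then be coloured so as to form a rainbow triangle. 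Consequently, for $v_iv_j\in G_2$ we get $A_i\cap A_j=\emptyset$ and $A_i\subseteq R_j:=V\setminus\bigl(N_{G_1}(v_j)\cup N_{G_3}(v_j)\bigr)$, where $|R_j|\leq N-d_{G_1}(v_j)<\tfrac{N}{r}$ by the hypothesis $\delta(G_1)>(1-\tfrac1r)N$. I also recall from~\eqref{eq: ub and lb bound on delta(G_i)} that $\delta(G_2)>\tfrac{(r-1)N}{r(r+1)}$.

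The case $m=3$ is then quick: here $v_1$ and $v_2$ are both $G_2$-adjacent to $v_3$, so $A_1$ and $A_2$ are disjoint subsets of $R_3$, and $\tfrac{2(r-1)N}{r(r+1)}<|A_1|+|A_2|\leq|R_3|<\tfrac{N}{r}$. Since $\tfrac{2(r-1)}{r(r+1)}\geq\tfrac1r$ precisely when $r\geq 3$, this is a contradiction. Hence $G_2$ is triangle-free and $m\geq 5$.

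For $m\geq 5$ the triangle argument breaks: $C$ being induced means $v_1v_3\notin G_2$, so $A_1$ and $A_3$ need not be disjoint. What survives is that $A_1,A_3\subseteq R_2$ (both $v_1$ and $v_3$ are $G_2$-adjacent to $v_2$), and since $|A_1|+|A_3|>\tfrac{2(r-1)N}{r(r+1)}\geq\tfrac{N}{r}>|R_2|\geq|A_1\cup A_3|$ we must have $A_1\cap A_3\neq\emptyset$. Picking $w\in A_1\cap A_3$, we get $wv_1,wv_3\in G_1\cap G_3$, while $w\in R_2$ forces the pair $wv_2$ to lie in no colour class other than (possibly) $G_2$. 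From here I would try to use $w$ --- and, running the same observation at every other vertex of $C$, the analogous common bi-chromatic neighbours of $v_i$ and $v_{i+2}$ --- to produce either a shorter odd cycle in $G_2$ or an outright rainbow triangle, contradicting the choice of $C$ or the Gallai property of $\mathbf{G}$. Carrying this out should require a case analysis on whether $wv_2\in G_2$ and on the colours joining $w$ to $\{v_4,\dots,v_m\}$, combined with the degree inequalities above and Lemma~\ref{lem:1-cut}.

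The step I expect to be the main obstacle is precisely this last one: for triangles the contradiction comes for free from two disjoint sets being squeezed into $R_3$, but for odd cycles of length $\geq 5$ this disjointness is lost, and extracting a contradiction from the common bi-chromatic neighbour $w$ --- equivalently, ruling out that a shortest odd cycle has length $\geq 5$ --- is where the real work lies.
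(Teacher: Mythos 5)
Your first step is fine: the triangle-free argument (two disjoint sets $A_1,A_2\subseteq R_3$ of total size greater than $\tfrac{2(r-1)N}{r(r+1)}\geq\tfrac{N}{r}>|R_3|$) is correct and is essentially the paper's own first claim, and your observation that for a shortest odd cycle of length $\geq 5$ the vertices $v_1,v_3$ must have a common neighbour $w$ in $G_1\cap G_3$ with $wv_2\notin G_1\cup G_3$ is also sound. But the proof stops exactly where the statement still has to be proved: you never derive a contradiction in the case $|C|\geq 5$, and you say yourself that this is the step you expect to be the obstacle. This is a genuine gap, not a routine case check. Note that $w$ alone cannot give a ``shorter odd cycle or an outright rainbow triangle'' in any obvious way: since rainbow edges are excluded (Corollary~\ref{cor: mindeg, no rainbow}), $wv_1,wv_3\notin G_2$, so even if $wv_2\in G_2$ you get no chord and no shorter odd $G_2$-cycle, and if $wv_2$ is a non-edge you get nothing at all; the configuration is locally consistent with Gallai-ness, so no purely local case analysis around $w$ will finish the job.

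The paper's route through this bottleneck is quantitative rather than structural. From the minimality of $C$ it shows $|N_{G_2}(v_{i-2})\cup N_{G_2}(v_i)|\geq\tfrac{3}{2}\delta(G_2)$ for a suitable $i$, finds a common $G_3$-neighbour $w$ of $v_{i-2},v_i$ (using $\delta(G_1)+2\delta(G_3)>N$), and uses the disjointness of $N_{G_1}(w)$ from that union to conclude $\delta(G_2)<\tfrac{2N}{3r}$. It then bounds $\delta(G_3)$ by a separate counting argument: each vertex sends at most two $G_3$-edges into $C$ and at most $|C|$ edges in colours $1$ and $3$ combined (with strictness when a colour-$3$ edge is present, via the fact that $G_3$-neighbourhoods are independent in $G_2$), which after summing over $V$ gives $\delta(G_1)+\tfrac{3}{2}\delta(G_3)\leq N$, hence $\delta(G_3)<\tfrac{2N}{3r}$. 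Together these contradict $\delta(G_2)+\delta(G_3)>\tfrac{2N}{r+1}$ for $r\geq 3$. If you want to salvage your approach, you should abandon the hope of a direct structural contradiction from the common bi-chromatic neighbours and instead aim for degree upper bounds of this kind that violate the hypothesis on $\delta(G_2)+\delta(G_3)$.
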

\begin{proof}
We first show $G_2$ is triangle-free and that the neighbourhoods in $G_3$ are independent sets in $G_2$.
\begin{claim}\label{claim: no 2 triangle incident with a 13 edge}
The graph $G_2$ is triangle-free.
\end{claim}
\begin{proof}
Suppose $\{v_1, v_2,v_3\}$ induces a triangle in $G_2$. 
%
%
As $\mathbf{G}$ is a Gallai colouring template, $N_{G_1\cap G_3}(v_1)$, $N_{G_1}(v_2)$ and $N_{G_3}(v_3)$ must be pairwise disjoint subsets of $N$. 
Since  $\delta(G_1)+\delta(G_3)+\delta(G_1\cap G_3)>\left(\frac{r-1}{r} + \frac{2(r-1)}{r(r+1)}\right)N>N$, this is
	a contradiction.
\end{proof}
\begin{claim}\label{claim: G3 nhoods indep in G2}
	For every $v\in V$, $N_{G_3}(v)$ is an independent set in $G_2$.	
\end{claim}
\begin{proof}
Suppose $u,u'\in N_{G_3}(v)$ and $uu'\in G_2$. By Claim~\ref{claim: no 2 triangle incident with a 13 edge}, $N_{G_2}(u)$ and $N_{G_2}(u')$ are disjoint sets. Further, since there are no rainbow triangles, both of these sets are disjoint from $N_{G_1}(v)$. Since $\delta(G_1)+2\delta(G_2)>N$, this is a contradiction.
\end{proof}

\noindent
Suppose now that $G_2$ is not bipartite. Since $G_2$ is triangle-free by Claim~\ref{claim: no 2 triangle incident with a 13 edge}, it must contain a shortest odd cycle $C$ of length $\vert C\vert=(2\ell +1)$, for some integer $\ell \geq 2$. Let $v_1, \dots, v_{2\ell+1}$ denote the vertices of $C$. By the minimality of $C$, every vertex $v\in V$ (including the ones in $C$) can send at most two edges in $G_2$ to $C$, for otherwise we would have a shorter odd cycle. Furthermore, if a vertex $v \in V$ sends two edges of $G_2$ into $C$, these two vertices must lie at distance exactly $2$ apart on the cycle. Hence $N_{G_2}(v_i)$ is disjoint from $N_{G_2}(v_j)$ for any $j \not \in \left\{i-2,i,i+2\right\}$.

Fix an index $i$ and consider the sets $N_{G_2}(v_{i-2}) \cap N_{G_2}(v_i)$ and $N_{G_2}(v_{i+2}) \cap N_{G_2}(v_i)$. Since they are disjoint subsets of $N_{G_2}(v_i)$, at least one of these two sets contains fewer than $\frac{1}{2} \vert N_{G_2}(v_i) \vert$ elements, and so we may assume without loss of generality that $\vert N_{G_2}(v_{i-2}) \cap N_{G_2}(v_i)\vert \leq \frac{1}{2} \vert N_{G_2}(v_i) \vert$. Hence 
\begin{align} \label{Large union}
\vert N_{G_2}(v_{i-2}) \cup N_{G_2}(v_i) \vert \geq \frac{3}{2}\vert N_{G_2}(v_i) \vert \geq \frac{3\delta(G_2)}{2}
\end{align}

Now, $N_{G_3}(v_{i}) \cup N_{G_3}(v_{i-2})$ and $N_{G_1}(v_{i-1})$ are disjoint sets (else we have a rainbow triangle), and since $\delta(G_1)+2\delta(G_3)>N$ (by~\eqref{eq: ub and lb bound on delta(G_i)}), it follows that the sets $N_{G_3}(v_{i})$ and $N_{G_3}(v_{i-2})$ must have a non-empty intersection. Thus there exists $w$ for which we have $wv_{i-2}$, $wv_{i} \in G_{3}$. Since we have no rainbow triangles in $\mathbf{G}$, $N_{G_1}(w)$ is disjoint from $N_{G_2}(v_{i-2})\cup N_{G_2}(v_i)$; combining this information with~\eqref{Large union}, we deduce that
\begin{align*}
\frac{3}{2}\delta_2(G) \leq \vert N_{G_2}(v_{i-2}) \cup N_{G_2}(v_i) \vert  \leq N- \vert N_{G_1}(w)\vert \leq N-\delta(G_1)<\frac{N}{r},
\end{align*}
whence $\delta(G_2) < \frac{2N}{3r}$. Our next aim is to show that the same bound holds for $\delta(G_3)$ as well. To do this, in the next two claims we shall bound  the number of edges in the various colours a vertex $v\in V$ can send into $C$.
\begin{claim}\label{claim: not too many edges into cycle, r geq 3}
	For any $v \in V$, there are at most $2$ edges of $G_{3}$ between $v$ and $C$.
\end{claim}
\begin{proof}
Suppose for a contradiction that there exist $v \in V$ and $v_{i_1}, v_{i_2}, v_{i_3} \in C$ so that $vv_{i_j} \in G_{3}$ for every $j \in \left\{1,2,3\right\}$. Since $C$ contains  $2\ell +1\geq 5$ vertices, there exists at least one pair of indices from $\{i_1, i_2, i_3\}$ which do not differ by $2$ modulo $2\ell +1$, say $i_{1}$ and $i_{2}$. Hence $N_{G_2}(v_{i_1})$ and $N_{G_2}(v_{i_2})$ are disjoint sets, and they are both disjoint from $N_{G_1}(v)$. This contradicts the fact that $\delta(G_1) + 2\delta(G_2) > N$ (by~\eqref{eq: ub and lb bound on delta(G_i)}).	
\end{proof}
\begin{claim}\label{claim: vertices send few edges to C}
For every $v\in V$, $\vert G_1[\{v\}, C]\vert + \vert G_3[\{v\}, C]\vert\leq \vert C\vert $. Moreover this can be attained if and only if $v\notin C$ and $v$ sends no edge in colour $3$ to $C$.
\end{claim}
\begin{proof}

Observe that $v$ can send in total at most two edges in colours $1$ or $3$ to a neighbouring pair of vertices $\left\{v_{i-1},v_{i}\right\}$ on $C$, as otherwise we have a rainbow triangle. 

If $v$ sends an edge in $G_{1} \cap G_{3}$ to $v_{i}$ for some $i$, then $v$ does not send any edge in colours $1$ or $3$  to $v_{i-1}$ or $v_{i+1}$. By the averaging argument above, $v$ sends at most a total of $\vert C \vert - 3$ edges in colours $1$ or $3$ to the remaining $\vert C \vert - 3$ vertices, as required. 

Otherwise, every pair $vv_{i}$ can belong to at most one of $G_{1}$ or $G_{3}$. To attain the upper bound of $\vert C\vert$ on the total number of edges in colours $1$ or $3$ sent by $v$ into $C$, each such pair must be present in at least one of the graphs $G_1$ and $G_3$, which implies that all $\vert C \vert$ edges must be of the same colour (as otherwise we have a rainbow triangle). However, $v$ cannot send an edge of $G_3$ to every vertex on $C$, as neighbourhoods in $G_3$ are independent sets in $G_2$ (by Claim~\ref{claim: G3 nhoods indep in G2}). This completes the proof.

\end{proof}

\noindent Now let  $A := \left\{v \in V: \vert G_{3}[\{v\}, C] \vert > 0 \right\}$. Claim~\ref{claim: vertices send few edges to C} implies that we have 
\begin{align}\label{eq: A and C} 
\left(\delta(G_1) + \delta(G_3)\right) \vert C \vert \leq \vert C \vert \left(N - \vert A \vert \right) + \left(\vert C \vert - 1\right) \vert A \vert= \vert C\vert N-\vert A\vert.
\end{align}
Since every $v \in V$ sends most two edges of $G_3$ to $C$ (by Claim~\ref{claim: not too many edges into cycle, r geq 3}), we have by averaging over the vertices of $C$ that $\vert A \vert \geq \frac{\vert C \vert \delta(G_3)}{2}$. Combining this lower bound on $\vert A\vert$ with~\eqref{eq: A and C}, we obtain that $\delta(G_1) + \frac{3\delta(G_3)}{2} \leq N$, which implies that $\delta(G_3) < \frac{2N}{3r}$ and hence $\delta(G_2)+\delta(G_3)<\frac{4N}{3r}$. However, since $r \geq 3$, this contradicts the fact that $\delta(G_2) + \delta(G_3) > \frac{2N}{r+1}$.  From this final contradiction we deduce that $G_2$ contains no odd cycle, and hence that $G_2$ is bipartite as desired.
\end{proof}
\noindent Let $A\sqcup B$ be a bipartition of $V$ such that $G_2\subseteq (A, B)^{(2)}$. We now prove our third key structural lemma and show $G_3$ is well-behaved with respect to this bipartition:  either $G_3$ is a bipartite graph with the same bipartition, or it has no edges going from $A$ to $B$.
\begin{lemma}\label{lemma: G3 structure}
Either $G_3\subseteq (A, B)^{(2)}$ or $G_3\subseteq A^{(2)}\cup B^{(2)}$.
\end{lemma}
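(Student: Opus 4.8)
The plan is to argue by contradiction: suppose $G_3$ has an edge $xy$ with $x,y\in A$ (the case $x,y\in B$ being symmetric), as well as an edge $uv$ crossing the bipartition, say $u\in A$, $v\in B$. I want to combine these two ``bad'' configurations with the structural facts already established — namely that $G_2$ is bipartite with parts $A,B$ (Lemma~\ref{lem: mindeg, G2 bipartite}), that $G_2\cap G_3=\emptyset$ (Lemma~\ref{lem: mindeg, no 23 edge}), and that (after relabelling) $G_1\cap G_3$ has large positive minimum degree — to force a rainbow triangle or violate the degree counts via the inequality $\delta(G_1)+2\delta(G_c)>N$ from \eqref{eq: ub and lb bound on delta(G_i)} and Lemma~\ref{lem:  intersections, if large, are non-empty}.

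First I would record the easy observation that since $G_2\subseteq (A,B)^{(2)}$, for any vertex $w\in A$ we have $N_{G_2}(w)\subseteq B$, and for $w\in B$, $N_{G_2}(w)\subseteq A$; in particular if $xy\in G_3$ with $x,y\in A$, then $N_{G_2}(x)$ and $N_{G_2}(y)$ both lie in $B$. Using that $\mathbf{G}$ has no rainbow triangle, for the edge $xy\in G_3$ the sets $N_{G_1}(x)$ and $N_{G_2}(y)$ are disjoint, and likewise $N_{G_1}(y)$ and $N_{G_2}(x)$ are disjoint. The cleanest route: show that $xy\in G_3$ with both endpoints in $A$ forces $N_{G_2}(x)\cup N_{G_2}(y)$ to be a large subset of $B$ while simultaneously being confined (via another no-rainbow-triangle argument against a third vertex) to avoid some large set, and derive $\delta(G_2)<\tfrac{N}{r+1}$-type bounds contradicting $\delta(G_2)+\delta(G_3)>\tfrac{2N}{r+1}$ together with $\delta(G_2)\geq\delta(G_3)$-style inequalities from \eqref{eq: ub and lb bound on delta(G_i)}. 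Simultaneously I would exploit a crossing edge $uv\in G_3$: pick (using $\delta^+(G_1\cap G_3)$ large) a vertex $z$ with $zu, zv'\in G_1\cap G_3$ appropriately, or observe that a vertex incident to both an ``$A$-internal'' $G_3$-edge and a crossing $G_3$-edge produces three $G_3$-edges whose $G_2$-neighbourhoods must overlap, giving a common neighbour and then a rainbow triangle against a $G_1$-neighbourhood — this is the same disjointness-of-three-large-sets trick used in the proof of Lemma~\ref{lem: mindeg, no 23 edge} and Lemma~\ref{lem: mindeg, G2 bipartite}.

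Concretely, the key step I expect to do the work: if $G_3$ contains both an $A$-internal edge and a crossing edge, then by connectivity-type reasoning (there is no non-trivial $1$-cut, Lemma~\ref{lem:1-cut}, so $G_2\cup G_3$ restricted appropriately is ``connected enough'') one can find a single vertex $x$ incident in $G_3$ to two vertices $y_1,y_2$ that lie on the \emph{same} side relative to whether $xy_i$ crosses $A\sqcup B$ or not in a conflicting way; then $N_{G_2}(y_1)$ and $N_{G_2}(y_2)$ sit in different parts ($B$ vs.\ $A$), they are disjoint from each other trivially (different parts) and each disjoint from $N_{G_1}(x)$ by no-rainbow-triangle, and $\delta(G_1)+2\delta(G_2)>N$ gives a contradiction. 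So the heart is a short combinatorial lemma: \emph{every vertex $x$ has all its $G_3$-neighbours on the same ``type''} — either all in $x$'s own part or all in the opposite part — and then a global propagation argument (again via Lemma~\ref{lem:1-cut}, forming the cut between vertices whose $G_3$-edges stay inside parts and those whose $G_3$-edges cross) upgrades this to the dichotomy $G_3\subseteq(A,B)^{(2)}$ or $G_3\subseteq A^{(2)}\cup B^{(2)}$.

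The main obstacle will be the propagation/globalisation step: showing local consistency (each vertex is of one ``type'') forces global consistency. The natural tool is Lemma~\ref{lem:1-cut}: partition $V$ into $P=\{v:\text{all }G_3\text{-edges at }v\text{ stay within a part}\}$ and $Q=V\setminus P$, and argue there is no edge of $G_2$ or $G_3$ between $P$ and $Q$ — a $G_2$-edge or $G_3$-edge across would, together with the large $G_1\cap G_3$-neighbourhoods and the no-rainbow-triangle condition, overflow $V$. One has to be careful that the two separate pieces ($A$-internal $G_3$-edges vs.\ $B$-internal vs.\ crossing) don't simply coexist harmlessly; ruling that out is exactly where \eqref{eq: ub and lb bound on delta(G_i)}, $\delta(G_1)+2\delta(G_c)>N$, and Lemma~\ref{lem:  intersections, if large, are non-empty} must be deployed, and getting the bookkeeping right — which two or three large neighbourhoods are pairwise disjoint in each sub-case — is the delicate part. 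I would structure the write-up as: (1) a claim that $N_{G_3}(x)$ is ``monochromatic in type'' for each $x$; (2) a claim that the type is globally constant via a $1$-cut argument; (3) conclude.
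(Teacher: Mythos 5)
Your proposal follows essentially the same route as the paper's proof: first the local claim that no vertex has $G_3$-neighbours in both $A$ and $B$ (via the disjointness of $N_{G_2}(y_1)$, $N_{G_2}(y_2)$ and $N_{G_1}(x)$ against $\delta(G_1)+2\delta(G_2)>N$), and then a $1$-cut between the vertices whose $G_3$-edges cross the bipartition and those whose $G_3$-edges stay inside a part, finished off by Lemma~\ref{lem:1-cut}. The one bookkeeping detail you left open is resolved more simply than you suggest: to exclude a $G_2$-edge $uv$ across this cut one does not need $G_1\cap G_3$ at all, but just takes a $G_3$-neighbour $w$ of the internal-type endpoint $v$ and notes that $N_{G_1}(v)$, $N_{G_2}(w)$ and $N_{G_3}(u)$ are pairwise disjoint, contradicting $\delta(G_1)+\delta(G_2)+\delta(G_3)>N$.
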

\begin{proof} \noindent We first show neighbourhoods in $G_3$ are well-behaved with respect to the bipartition $A\sqcup B$. 
\begin{claim}\label{claim: g3 respects bip}
For every $v\in V$, $N_{G_3}(v)$ has non-empty intersection with at most one of $A$ and $B$.
\end{claim}
\begin{proof}	
Suppose for a contradiction that $av, bv\in G_3$ for some $a\in A$, $b\in B$. Since $G_2$ is bipartite, $N_{G_2}(a)$ and $N_{G_2}(b)$ are disjoint. Furthermore, both of these sets are disjoint from $N_{G_1}(v)$ (as otherwise we would have a rainbow triangle). Since $\delta(G_1)+2\delta(G_2)>N$, this is a contradiction.
\end{proof}
\noindent With this claim in hand, we may define a new bipartition $V=C\sqcup D$ by setting $C:=\{v\in A: \ N_{G_3}(v)\subseteq B\}\cup \{v\in B:\ N_{G_3}(v)\subseteq A\}$ (i.e.\ the vertices incident with an edge of $G_3$ crossing the bipartition $A\sqcup B$) and $D:=\{v\in A: \ N_{G_3}(v)\subseteq A\}\cup \{v\in B:\ N_{G_3}(v)\subseteq B\}$ (i.e.\ the vertices incident with an edge of $G_3$ contained in $A^{(2)}\cup B^{(2)}$).  
\begin{claim}
$C\sqcup D$ is a $1$-cut.	
\end{claim}
\begin{proof}
By Claim~\ref{claim: g3 respects bip} we already know that there are no edges of $G_3$ in $(C, D)^{(2)}$. Suppose now $u\in C$, $v\in D$ and $uv\in G_2$. Since $G_2$ is bipartite, we may assume without loss of generality that $u\in A$ and $v\in B$. Let $w$ be a neighbour of $v$ in $G_3$, which must lie inside $B\cap D$ (since $v\in B\cap D$). Now as $\mathbf{G}$ contains no rainbow triangle, $N_{G_1}(v)$ must be disjoint from $N_{G_3}(u)$ and $N_{G_2}(w)$. Further $N_{G_2}(w)\subseteq A$ (since $G_2$ is bipartite and $w\in B$) and $N_{G_3}(u)\subseteq B$ (since $u\in A\cap C$). Thus the three sets $N_{G_1}(v)$, $N_{G_2}(w)$ and $N_{G_3}(u)$ are pairwise disjoint --- however $\delta(G_1)+\delta(G_2)+\delta(G_3)>N$, so this gives a contradiction. It follows there are no edges in $G_2$ from $C$ to $D$, and that $C\sqcup D$ is a $1$-cut as claimed. 	
\end{proof}
\noindent The Lemma now follows immediately from Lemma~\ref{lem:1-cut}: either $C=\emptyset$ or $D=\emptyset$.
%
%
\end{proof}
\noindent As a corollary to Lemma~\ref{lemma: G3 structure}, we can rule out the case $r=3$; knowing $r\geq 4$ will simplify the final analysis.
\begin{corollary}\label{cor: mindeg r >3}
	We must have $r\geq 4$.
\end{corollary}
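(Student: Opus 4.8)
The plan is to derive a contradiction from the assumption $r=3$ using the structural information already accumulated, chiefly Lemma~\ref{lem: mindeg, no 23 edge} (no bi-chromatic $23$ edges), Lemma~\ref{lem: mindeg, G2 bipartite} ($G_2$ bipartite, with bipartition $A\sqcup B$), and Lemma~\ref{lemma: G3 structure} (either $G_3\subseteq(A,B)^{(2)}$ or $G_3\subseteq A^{(2)}\cup B^{(2)}$), together with the $r=3$ numerology: $\delta(G_1)>\tfrac{2N}{3}$, $\delta(G_2)+\delta(G_3)>\tfrac{N}{2}$, and the bounds from~\eqref{eq: ub and lb bound on delta(G_i)}, which for $r=3$ read $\tfrac{N}{3}>\Delta(G_c)\geq\delta(G_c)>\tfrac{N}{6}$ for $c\in\{2,3\}$. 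The first step is to split into the two cases furnished by Lemma~\ref{lemma: G3 structure}.

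In the case $G_3\subseteq(A,B)^{(2)}$, both $G_2$ and $G_3$ live across the same bipartition $A\sqcup B$. Then for any vertex $v$ and any neighbour $w\in N_{G_2}(v)$ (so $w$ lies on the opposite side), the sets $N_{G_1}(v)$, $N_{G_2}(w)$ and $N_{G_3}(v)$ should be arranged so as to force a rainbow triangle or a size contradiction: $N_{G_2}(v)$ and $N_{G_3}(v)$ both lie on the opposite side from $v$, while $N_{G_1}(v)$ must avoid the $G_2$- and $G_3$-neighbourhoods of suitable vertices. Concretely I would pick a $G_2$-edge $ab$ with $a\in A$, $b\in B$, and a $G_3$-neighbour of $a$ (which lies in $B$); the absence of rainbow triangles forces $N_{G_1}(b)$ disjoint from $N_{G_3}(a)$ and from $N_{G_2}(a)$, but $N_{G_3}(a)\cup N_{G_2}(a)$ could still be small, so instead I expect one wants to count: $\delta(G_1)>\tfrac{2N}{3}$ means every vertex of $A$ has more than $\tfrac{2N}{3}$ $G_1$-neighbours, forcing $G_1$ to be very dense inside each of $A$ and $B$, while the minimum degree conditions on $G_2,G_3$ force $\vert A\vert,\vert B\vert>\tfrac{N}{6}$ and in fact roughly balanced; then a triangle in $G_1[A]$ of the right size, extended as in the proof of Lemma~\ref{lem: mindeg, no 23 edge}, collides with a common $G_2$- or $G_3$-neighbour to produce a rainbow triangle.

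In the case $G_3\subseteq A^{(2)}\cup B^{(2)}$, $G_3$ splits into two graphs $G_3[A]$ and $G_3[B]$, and since $\delta(G_3)>\tfrac{N}{6}>0$ neither part is empty; moreover $\delta(G_3[A])\geq\delta(G_3)$ forces $\vert A\vert>\delta(G_3)+1>\tfrac{N}{6}$ and likewise $\vert B\vert>\tfrac{N}{6}$. Here I would look at a $G_3$-edge $uu'$ inside $A$ together with a $G_2$-edge from $u$ to some $b\in B$: no rainbow triangle forces $N_{G_1}(b)$ disjoint from $N_{G_3}(u)$, and $N_{G_1}(u')$ disjoint from $N_{G_2}(b)\cup\dots$; combined with $\delta(G_1)>\tfrac{2N}{3}$ these disjointness constraints should overflow $N$. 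The cleanest route is probably to observe that inside $A$ we have a $3$-colouring template $\mathbf{G}[A]$ which is Gallai, with $\delta(G_3[A])\geq\delta(G_3)$, $\delta(G_2[A])=0$ (since $G_2$ crosses $A\sqcup B$), but with $G_1[A]$ very dense because $d_{G_1[A]}(v)\geq d_{G_1}(v)-\vert B\vert>\tfrac{2N}{3}-(N-\vert A\vert)=\vert A\vert-\tfrac{N}{3}$; if $\vert A\vert$ is close to $N/2$ this gives $\delta(G_1[A])>\big(1-\tfrac{1}{r'}\big)\vert A\vert$ for a small $r'$, and applying Proposition~\ref{prop: trivial min degree bound} (or Theorem~\ref{theorem: forcing min degrees} for $r'\in\{1,2\}$, already established) inside $A$ with the roles of colours $2$ and $1$ arranged appropriately yields a rainbow triangle or forces $\delta(G_3)$ too small, contradicting $\delta(G_3)>\tfrac{N}{6}$.

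The main obstacle I anticipate is controlling the sizes $\vert A\vert$ and $\vert B\vert$ precisely enough: the arguments above are comfortable when $A$ and $B$ are both close to $N/2$, but a priori one side could be as small as a little over $N/6$ and the other nearly $5N/6$, and in the unbalanced regime the density of $G_1$ on the large side is weaker. Resolving this will require combining the lower bounds $\vert A\vert,\vert B\vert>\tfrac{N}{6}$ coming from $\delta(G_2),\delta(G_3)$ with the constraint $\delta(G_1)>\tfrac{2N}{3}$ — note that a vertex in the smaller side has almost all of its $\tfrac{2N}{3}$ colour-$1$ neighbours on the larger side, so the larger side has size greater than $\tfrac{2N}{3}$ only if... — actually this quickly forces near-balance, since if $\vert A\vert\leq\vert B\vert$ then picking $v\in A$ gives $d_{G_1}(v)\leq (\vert A\vert-1)+\vert B\vert=N-1$ with no constraint, but pairing with the $G_2$-neighbourhood structure (every $G_2$-edge crosses, so $d_{G_2}(v)\leq\vert B\vert$, hence $\vert B\vert\geq\delta(G_2)$ and symmetrically $\vert A\vert\geq\delta(G_2)$) plus $\delta(G_2)+\delta(G_3)>\tfrac{N}{2}$ and the upper bound $\Delta(G_2)<\tfrac N3$ pins things down. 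I would budget most of the effort for this balancing step and for checking the arithmetic in the two overflow inequalities, both of which should go through cleanly once $r=3$ is substituted.
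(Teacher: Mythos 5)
There is a genuine gap: your proposal is a plan with the key step missing, and its ``cleanest route'' in the second case cannot work. The paper's proof is a two-line pigeonhole that needs neither balance of $\vert A\vert,\vert B\vert$ nor any counting: assume WLOG $\vert A\vert\leq N/2\leq\vert B\vert$. If $G_3\subseteq(A,B)^{(2)}$, then since $\delta(G_1)>N/2\geq\vert A\vert$ some edge $uv\in G_1$ lies inside $B$; now $N_{G_2}(u)$ and $N_{G_3}(v)$ are \emph{both} subsets of $A$, and $\delta(G_2)+\delta(G_3)>\tfrac{2N}{r+1}=\tfrac{N}{2}\geq\vert A\vert$, so they intersect and give a rainbow triangle. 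If $G_3\subseteq A^{(2)}\cup B^{(2)}$, take instead a $G_1$-edge $uv$ with $u\in A$, $v\in B$ (again using $\delta(G_1)>N/2\geq \vert A\vert$); then $N_{G_3}(u)$ and $N_{G_2}(v)$ are both subsets of $A$ and the same pigeonhole applies. Your write-up never identifies this observation that the two neighbourhoods land in the \emph{smaller} side: Case 1 ends with ``I expect one wants to count'' and an unexecuted scheme involving triangles in $G_1[A]$, the assertion that $A$ and $B$ are ``roughly balanced'' is neither proved nor needed, and you yourself flag the unbalanced regime as an unresolved obstacle --- precisely the gap the actual argument shows is vacuous.

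The proposed shortcut for the case $G_3\subseteq A^{(2)}\cup B^{(2)}$ --- restricting to the template $\mathbf{G}[A]$ and invoking Proposition~\ref{prop: trivial min degree bound} or the already-established cases $r'\in\{1,2\}$ of Theorem~\ref{theorem: forcing min degrees} --- is not repairable as stated: since $G_2$ is bipartite across $A\sqcup B$, the colour class $2$ is empty inside $A$, so $\mathbf{G}[A]$ contains no rainbow triangle no matter how dense $G_1[A]$ and $G_3[A]$ are, and every result you cite requires (after any relabelling of colours) positive minimum degree in the relevant colour classes, which fails. Any contradiction must use crossing edges, and the crossing argument you sketch (``$N_{G_1}(b)$ disjoint from $N_{G_3}(u)$, and $N_{G_1}(u')$ disjoint from $N_{G_2}(b)\cup\dots$'') is left unfinished and does not overflow on its own: for $r=3$ one only gets $\delta(G_1)+\delta(G_3)>\tfrac{2N}{3}+\tfrac{N}{6}=\tfrac{5N}{6}<N$, so a third set confined to a bounded region is needed --- which is exactly what placing both neighbourhoods inside $A$ with $\vert A\vert\leq N/2$ accomplishes in the paper's proof.
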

\begin{proof}
	Suppose for a contradiction that $r= 3$. We may assume without loss of generality that $\vert A\vert \leq N/2\leq \vert B\vert$. We separate into two cases.

	If $G_3 \subseteq (A,B)^{(2)}$, then $\delta(G_1)>N/2$ implies that there is at least one edge $uv\in G_1$ with $u,v\in B$. Now both $N_{G_2}(u)$ and $N_{G_3}(v)$ are subsets of $A$ and the sum of their sizes is strictly greater than $N/2$. Hence by the pigeon-hole principle, $\vert N_{G_2}(u)\cap N_{G_3}(v)\vert>0$ and we have a rainbow triangle, a contradiction.

	On the other hand if $G_3\subseteq A^{(2)}\cup B^{(2)}$, then $\delta(G_1)>N/2$ implies that there is at least one edge $uv\in G_1$ with $u\in A$ and $v\in B$. Now both $N_{G_3}(u)$ and $N_{G_2}(v)$ are subsets of $A$ and the sum of their sizes is strictly greater than $N/2$. Again, by the pigeon-hole principle, this implies the existence of a rainbow triangle, a contradiction.	
\end{proof}
\noindent It turns out that analysing the two alternatives for $G_3$'s structure provided by Lemma~\ref{lemma: G3 structure} can be essentially done simultaneously if $G_{2}$ happened to be disconnected. This follows from the fact that in such a case, the only way to avoid a non-trivial $1$-cut is for $G_{3}$ to also be bipartite, with $G_{3}$ edges joining vertices in different connected components of $G_{2}$ (this is proved in Lemma~\ref{lemma: Four parts} below). In particular, the cases $G_{3} \subseteq A^{(2)} \cup B^{(2)}$ and $G_{3} \subseteq \left(A, B\right)^{(2)}$ are symmetric under switching the vertex classes of one connected component of $G_{2}$ in the bipartition.

Hence our next task is to prove that $G_{2}$ cannot be connected. Our approach is as follows: we show that if $x$ and $y$ are two vertices on the same side of the bipartition which are also in the same connected component of $G_2$, then they have many joint neighbours in $G_{2}$  (Lemma~\ref{lemma: G2 properties if connected} below). This in turn can be shown (Lemma~\ref{lemma: mindeg done if G2 connected}) to contradict the upper bound on $\Delta(G_{2})$ from~\eqref{eq: ub and lb bound on delta(G_i)}. Unfortunately, this approach does not quite work when $r = 4$ and $\delta(G_2)$ is small, and we need to apply a more ad hoc argument (Lemma~\ref{lemma: G2 not connected, r=4}) to deal with this special case.

\noindent Having sketched how our final analysis will go, we now make it concrete. We begin by recording a useful lemma which shows neighbourhoods in $G_2$ and $G_3$ have large intersections if they are non-empty.
\begin{lemma}\label{lemma: useful bound on nhood of pairs}
Let $\{i_1,j_1\}$ and $\{i_2, j_2\}$ be two copies of $\{2,3\}$. Let $u,u'$ be two distinct vertices in $V$. If $N_{G_{i_1}}(u)\cap N_{G_{i_2}}(u')\neq \emptyset$, then both $N_{G_{i_1}}(u)\cap N_{G_{i_2}}(u')$ and $N_{G_{j_1}}(u)\cap N_{G_{j_2}}(u')$ have size strictly larger than $\frac{(r-3)}{r(r+1)}N$. Furthermore, if $\max\left(\delta(G_{j_{1}}),\delta(G_{j_{2}})\right)=\max\left(\delta(G_2),\delta(G_3)\right)$, then we have the stronger bound $\vert N_{G_{j_1}}(u)\cap N_{G_{j_2}}(u') \vert > \frac{(r-1)}{r(r+1)}N$. 
\end{lemma}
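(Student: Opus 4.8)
The plan is to isolate the effect of a single common neighbour and then bootstrap between the two colour pairs. Suppose $a,b\in\{2,3\}$ with complementary colours $\overline a,\overline b$, and suppose some $w$ satisfies $w\in N_{G_a}(u)\cap N_{G_b}(u')$. Since $uw\in G_a$ and $\mathbf{G}$ is Gallai, for every $z\in N_{G_1}(w)$ the triangle $\{u,w,z\}$ (with $u,w,z$ genuinely distinct, as $u\neq u'$ and $w$ is a neighbour of both) cannot use the three distinct colours $a,1,\overline a$; hence $N_{G_1}(w)$ is disjoint from $N_{G_{\overline a}}(u)$, and likewise, from $u'w\in G_b$, disjoint from $N_{G_{\overline b}}(u')$. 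Therefore $N_{G_{\overline a}}(u)\cup N_{G_{\overline b}}(u')\subseteq V\setminus N_{G_1}(w)$, a set of size at most $N-\delta(G_1)<N/r$, and inclusion--exclusion gives
\[
\bigl| N_{G_{\overline a}}(u)\cap N_{G_{\overline b}}(u')\bigr|\;\geq\;\delta(G_{\overline a})+\delta(G_{\overline b})-\bigl(N-\delta(G_1)\bigr)\;>\;\delta(G_{\overline a})+\delta(G_{\overline b})-\tfrac{N}{r}.
\]
The entire point of routing through $N_{G_1}(w)$ is to replace the useless term $-N$ of the naive bound by $-N/r$.

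First I would apply this with $(a,b)=(i_1,i_2)$ and the given common neighbour $w$: using $\delta(G_c)>\tfrac{r-1}{r(r+1)}N$ for $c\in\{2,3\}$ from~\eqref{eq: ub and lb bound on delta(G_i)}, the displayed inequality yields $\bigl|N_{G_{j_1}}(u)\cap N_{G_{j_2}}(u')\bigr|>\tfrac{2(r-1)}{r(r+1)}N-\tfrac{N}{r}=\tfrac{r-3}{r(r+1)}N\geq 0$, so in particular this intersection is non-empty. Picking any $w'$ in it and applying the claim a second time with $(a,b)=(j_1,j_2)$ and common neighbour $w'$ (whose complements are $i_1,i_2$) gives the matching bound $\bigl|N_{G_{i_1}}(u)\cap N_{G_{i_2}}(u')\bigr|>\tfrac{r-3}{r(r+1)}N$, establishing the first assertion.

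For the \emph{furthermore}, observe that each of $\delta(G_{j_1}),\delta(G_{j_2})$ lies in $\{\delta(G_2),\delta(G_3)\}$, so if $\max(\delta(G_{j_1}),\delta(G_{j_2}))=\max(\delta(G_2),\delta(G_3))$ then the larger of $\delta(G_{j_1}),\delta(G_{j_2})$ equals $\max(\delta(G_2),\delta(G_3))$ while the smaller is at least $\min(\delta(G_2),\delta(G_3))$; hence $\delta(G_{j_1})+\delta(G_{j_2})\geq\delta(G_2)+\delta(G_3)>\tfrac{2N}{r+1}$. Feeding this into the displayed inequality (once more using the common neighbour $w$, with $\overline{i_1}=j_1$, $\overline{i_2}=j_2$) upgrades the bound to $\bigl|N_{G_{j_1}}(u)\cap N_{G_{j_2}}(u')\bigr|>\tfrac{2N}{r+1}-\tfrac{N}{r}=\tfrac{r-1}{r(r+1)}N$, as required.

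I do not expect a genuine obstacle here: the conceptual content is entirely the one-common-neighbour claim, i.e.\ the idea of using $N_{G_1}(w)$ as the set to be avoided so that the ambient bound drops from $N$ to $N-\delta(G_1)<N/r$. The only mildly delicate points are the distinctness check for the putative rainbow triangle (immediate) and the short case analysis in the last paragraph showing the hypothesis of the \emph{furthermore} forces $\delta(G_{j_1})+\delta(G_{j_2})\geq\delta(G_2)+\delta(G_3)$.
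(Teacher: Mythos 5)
Your proposal is correct and follows essentially the same route as the paper: fix a vertex in the given non-empty intersection, use the Gallai property to see its $G_1$-neighbourhood avoids $N_{G_{j_1}}(u)\cup N_{G_{j_2}}(u')$, apply inclusion--exclusion with $\delta(G_1)>(1-\tfrac1r)N$ and the bounds from \eqref{eq: ub and lb bound on delta(G_i)}, and then argue symmetrically (your second application with a vertex $w'$ of the $j$-intersection is exactly the paper's ``follows symmetrically''), with the same observation that the hypothesis of the furthermore forces $\delta(G_{j_1})+\delta(G_{j_2})\geq\delta(G_2)+\delta(G_3)>\tfrac{2N}{r+1}$.
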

\begin{proof}
Let $v\in N_{G_{i_1}}(u)\cap N_{G_{i_2}}(u')$. Since $\mathbf{G}$ is a Gallai colouring template, $N_{G_1}(v)$ is disjoint from $N_{G_{j_1}}(u) \cup N_{G_{j_2}}(u')$. Then by our minimum degree assumptions for our three colour classes and the lower bound on $\delta(G_c)$, $c\in \{2,3\}$ from~\eqref{eq: ub and lb bound on delta(G_i)}, we have:
\begin{align*}
\vert N_{G_{j_1}}(u)\cap N_{G_{j_2}}(u')\vert &\geq \vert N_{G_{j_1}}(u)\vert + \vert N_{G_{j_2}}(u')\vert - \left(N- \vert N_{G_1}(v)\vert \right)\\ &\geq \delta(G_1)+ \delta(G_{j_1})+\delta(G_{j_2}) -N> \frac{(r-3)}{r(r+1)}N,
\end{align*}
as required. Thus the intersection is non-empty, and the lower bound on $\vert N_{G_{i_1}}(u)\cap N_{G_{i_2}}(u')\vert$ follows symmetrically.	
Further, if at least one of $\delta(G_{j_{1}})$ or $\delta(G_{j_{2}})$ is equal to $\max\left(\delta(G_2),\delta(G_3)\right)$, then $\delta(G_{j_1})+\delta(G_{j_2})$ is at least $\frac{2N}{r+1}$, which, substituted in the displayed inequality above, yields an improved lower bound 
\begin{align*}
\vert N_{G_{j_1}}(u)\cap N_{G_{j_2}}(u')\vert > \frac{(r-1)}{r(r+1)}N.
\end{align*}
\end{proof}
The next lemma is the key tool we employ to derive a contradiction from our structural information; it guarantees that vertices from $A$ in the same component of $G_2$ have a large number of common neighbours in $G_2$. This in turn implies the existence of vertices in $B$ with large degree in $G_2$, violating the upper bound $\Delta(G_2)\leq \frac{N}{r}$ from~\eqref{eq: ub and lb bound on delta(G_i)}. 
\begin{lemma}\label{lemma: G2 properties if connected} 
Suppose that $\delta(G_2) \geq \frac{2}{3r}N$. If $u, u''$ are both in $A$ or both in $B$ and in addition lie in the same connected component of $G_2$, then they have at least $\frac{r-3}{r(r+1)}N$ neighbours in common in both $G_2$ and $G_3$. 
\end{lemma}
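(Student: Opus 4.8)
I would first reduce the statement to showing that $N_{G_2}(u)\cap N_{G_2}(u'')\neq\emptyset$: once a common $G_2$-neighbour is known to exist, Lemma~\ref{lemma: useful bound on nhood of pairs} (applied to $u,u''$ with $i_1=i_2=2$ and $j_1=j_2=3$) immediately yields that both $N_{G_2}(u)\cap N_{G_2}(u'')$ and $N_{G_3}(u)\cap N_{G_3}(u'')$ have size greater than $\frac{r-3}{r(r+1)}N$, which is exactly the conclusion we want. Since $G_2\subseteq (A,B)^{(2)}$ is bipartite and $u,u''$ lie on the same side of $A\sqcup B$, their $G_2$-distance $d:=d_{G_2}(u,u'')$ is finite and even, so I would prove the existence of a common $G_2$-neighbour by induction on $d$. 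The base case $d=2$ is immediate, the middle vertex of a shortest $u$--$u''$ path in $G_2$ being a common $G_2$-neighbour.

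For the inductive step, take $d=2k$ with $k\geq 2$ and fix a shortest $G_2$-path $u=x_0,x_1,\dots,x_{2k}=u''$. The pairs $(x_0,x_{2k-2})$ and $(x_{2k-2},x_{2k})$ both lie on one side of $A\sqcup B$ and in the same $G_2$-component, at $G_2$-distances $2k-2<d$ and $2$ respectively; so the induction hypothesis and the base case provide each of them with a common $G_2$-neighbour. The crucial point is to upgrade this to the \emph{quantitative} statement that each of $\vert N_{G_2}(x_0)\cap N_{G_2}(x_{2k-2})\vert$ and $\vert N_{G_2}(x_{2k-2})\cap N_{G_2}(x_{2k})\vert$ exceeds $\frac{r-1}{r(r+1)}N$. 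To get this, I would argue that whenever two vertices $z,z'$ on the same side share a $G_2$-neighbour, Lemma~\ref{lemma: useful bound on nhood of pairs} forces them to share a $G_3$-neighbour as well (since $\frac{r-3}{r(r+1)}N>0$, using $r\geq4$ from Corollary~\ref{cor: mindeg r >3}); fixing such a $G_3$-neighbour $w$ and using that $\mathbf{G}$ is Gallai, $N_{G_1}(w)$ is disjoint from $N_{G_2}(z)\cup N_{G_2}(z')$, so $\vert N_{G_2}(z)\cap N_{G_2}(z')\vert\geq 2\delta(G_2)+\delta(G_1)-N$. When $\delta(G_2)=\max(\delta(G_2),\delta(G_3))$ we have $2\delta(G_2)>\frac{2N}{r+1}$ and hence this is $>\frac{r-1}{r(r+1)}N$; the hypothesis $\delta(G_2)\geq\frac{2N}{3r}$ is what keeps the relevant estimates from degenerating in the leftover low-$r$ range, and in the complementary case $\delta(G_3)>\delta(G_2)$ one runs the same argument with the colours $2$ and $3$ swapped, bridging through common $G_3$-neighbours and reconverting to a common $G_2$-neighbour at the end via Lemma~\ref{lemma: useful bound on nhood of pairs}.

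Armed with the bound that each of $N_{G_2}(x_0)\cap N_{G_2}(x_{2k-2})$ and $N_{G_2}(x_{2k-2})\cap N_{G_2}(x_{2k})$ has size more than $\frac{r-1}{r(r+1)}N$, I would finish as follows: both are subsets of $N_{G_2}(x_{2k-2})$, and $\vert N_{G_2}(x_{2k-2})\vert\leq\Delta(G_2)<\frac{N}{r}$ by~\eqref{eq: ub and lb bound on delta(G_i)}; since $\frac{2(r-1)}{r(r+1)}>\frac1r$ exactly when $r>3$, and $r\geq4$ by Corollary~\ref{cor: mindeg r >3}, the two sets must meet, and any vertex in their intersection is a common $G_2$-neighbour of $x_0=u$ and $x_{2k}=u''$. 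This closes the induction, and then Lemma~\ref{lemma: useful bound on nhood of pairs} delivers the claimed lower bounds on the two common neighbourhoods.

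The main obstacle is precisely this quantitative upgrade: the estimate that Lemma~\ref{lemma: useful bound on nhood of pairs} hands out for free, $\frac{r-3}{r(r+1)}N$, is too small to force the final intersection to be non-empty when $r$ is small, so one genuinely needs the stronger $\frac{r-1}{r(r+1)}N$ bound. Extracting it requires a short case split on which of $\delta(G_2)$ and $\delta(G_3)$ is larger, and it is here (and in keeping the associated inequalities valid for $r=4$) that the hypothesis $\delta(G_2)\geq\frac{2N}{3r}$, rather than merely $\delta(G_2)>0$, is used; the remaining steps are routine manipulation of the minimum-degree inequalities recorded in~\eqref{eq: ub and lb bound on delta(G_i)}.
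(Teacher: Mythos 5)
Your proposal is correct, but the engine of your inductive step is genuinely different from the paper's. Both arguments reduce the lemma to producing a single common $G_2$-neighbour of $u$ and $u''$ (after which Lemma~\ref{lemma: useful bound on nhood of pairs} delivers the stated bounds in both colours), and both contract an even $G_2$-path step by step. The paper does the contraction at distance $4$ by contradiction: if $N_{G_2}(u)$ and $N_{G_2}(u'')$ were disjoint, their traces on $N_{G_2}(u')$ would be disjoint, so one of them covers at most half of $N_{G_2}(u')$, giving $\vert N_{G_2}(u)\cup N_{G_2}(u')\vert \geq \frac{3}{2}\delta(G_2)\geq \frac{N}{r}$, which contradicts the upper bound $\vert N_{G_2}(u)\cup N_{G_2}(u')\vert < \frac{N}{r}$ coming from a common $G_3$-neighbour; the hypothesis $\delta(G_2)\geq \frac{2N}{3r}$ enters exactly in the step $\frac{3}{2}\delta(G_2)\geq\frac{N}{r}$. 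You instead bridge through $x_{2k-2}$, bootstrap the stronger $\frac{r-1}{r(r+1)}N$ bound from the ``furthermore'' clause of Lemma~\ref{lemma: useful bound on nhood of pairs} in whichever of the colours $2,3$ has the larger minimum degree, and then pigeonhole inside $N_{G_2}(x_{2k-2})$ (respectively $N_{G_3}(x_{2k-2})$), whose size is below $\frac{N}{r}$ by~\eqref{eq: ub and lb bound on delta(G_i)}, using $\frac{2(r-1)}{r(r+1)}>\frac{1}{r}$ for $r\geq 4$ (Corollary~\ref{cor: mindeg r >3}), before converting back to a common $G_2$-neighbour when colour $3$ was used. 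One point worth flagging: your hedge about where $\delta(G_2)\geq\frac{2N}{3r}$ is needed is unnecessary --- in your main case $\delta(G_2)\geq\delta(G_3)$ already forces $\delta(G_2)>\frac{N}{r+1}\geq\frac{2N}{3r}$, and in the complementary case you use no lower bound on $\delta(G_2)$ beyond~\eqref{eq: ub and lb bound on delta(G_i)} --- so your argument actually proves the lemma without that hypothesis (given the standing assumption $r\geq 4$). This is a mild strengthening that the paper's shorter $\frac{3}{2}$-averaging argument does not give, and is the kind of statement that could in principle be used to streamline the separate ad hoc treatment of the small-$\delta(G_2)$, $r=4$ case in Lemma~\ref{lemma: G2 not connected, r=4}; the trade-off is that your route needs the case split on $\max(\delta(G_2),\delta(G_3))$ and the colour-swapping detour, whereas the paper's argument works in colour $2$ throughout.
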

\begin{proof}
	Without loss of generality, we may assume $u,u''\in A$. Note it is enough to show that if there is a path of length $4$ in $G_2$ with edges $uv$, $vu'$, $u'v'$ and $v'u''$ joining $u$ to $v$, then $u$ and $u''$ have at least one common neighbour in $G_2$ --- indeed by Lemma~\ref{lemma: useful bound on nhood of pairs} it then follows that $u$ and $u''$ have at least $\frac{r-3}{r(r+1)}N$ common neighbours in both $G_2$ and $G_3$.

	Suppose for the sake of contradiction that $N_{G_2}(u)$ and $N_{G_2}(u'')$ are disjoint sets. Since $N_{G_2}(u) \cap N_{G_2}(u')$ and $N_{G_2}(u') \cap N_{G_2}(u'')$ are both non-empty, it follows from Lemma~\ref{lemma: useful bound on nhood of pairs} that $N_{G_3}(u) \cap N_{G_3}(u')$ and $N_{G_3}(u') \cap N_{G_3}(u'')$  are both non-empty as well. Now, for any $w\in N_{G_3}(u)\cap N_{G_3}(u')$, $N_{G_1}(w)$ is a set of size at least $\delta(G_1)>N-\frac{N}{r}$, which is disjoint from $N_{G_2}(u)\cup N_{G_2}(u')$; thus $\vert N_{G_2}(u)\cup N_{G_2}(u')\vert < \frac{N}{r}$, and similarly we have $\vert N_{G_2(u')}\cup N_{G_2}(u'')\vert<\frac{N}{r}$.

	Since $N_{G_2}(u)$ and $N_{G_2}(u'')$ are disjoint sets, at least one of the inequalities $\vert N_{G_2}(u) \cap N_{G_2}(u') \vert \leq \frac{1}{2}\vert N_{G_2}(u') \vert$ and $\vert N_{G_2}(u'') \cap N_{G_2}(u') \vert \leq \frac{1}{2}\vert N_{G_2}(u') \vert$ must be satisfied --- without loss of generality, let us assume it is the first of these two inequalities. Then it follows that 
	\begin{align*}
	\vert N_{G_2}(u) \cup N_{G_2}(u') \vert \geq \vert N_{G_2}(u) \vert  + \frac{1}{2} \vert N_{G_2}(u') \vert \geq \frac{3\delta(G_2)}{2}.
	\end{align*}
Since $\delta(G_2) \geq  \frac{2}{3r}N$, it follows that $	\vert N_{G_2}(u) \cup N_{G_2}(u') \vert \geq \frac{N}{r}$, a contradiction. The lemma follows. 
\end{proof}
\noindent We can now prove that $G_{2}$ is not connected. We split the proof into two lemmas, the first of which deals with the general case $r \geq 5$ and $r = 4$, $\delta(G_2)\geq \frac{2}{3r}N$, and the second of which deals with special case $r=4$, $\delta(G_2)< \frac{2}{3r}N$. 

\begin{lemma}\label{lemma: mindeg done if G2 connected}
Suppose $\delta(G_2)\geq \frac{2}{3r}N$. Then $G_{2}$ is not connected. 
\end{lemma}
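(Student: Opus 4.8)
The plan is to argue by contradiction, so suppose $G_2$ is connected. By Lemma~\ref{lem: mindeg, G2 bipartite} we may fix a bipartition $V=A\sqcup B$ with $G_2\subseteq(A,B)^{(2)}$; both parts are non-empty since $\delta(G_2)>0$, and we may assume $\vert B\vert\geq\vert A\vert$, so $\vert B\vert\geq N/2$. Fix a colour $c\in\{2,3\}$ with $\delta(G_c)=\max(\delta(G_2),\delta(G_3))$ and write $\overline c$ for the other colour of $\{2,3\}$. As $G_2$ is connected, any two vertices in the same part of $A\sqcup B$ lie in a common component of $G_2$, so Lemma~\ref{lemma: G2 properties if connected} (whose hypothesis $\delta(G_2)\geq\tfrac{2}{3r}N$ is precisely ours) applies to them.

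The crux is the following claim: any two distinct vertices $u,u'$ in the same part of the bipartition satisfy $\vert N_{G_c}(u)\cap N_{G_c}(u')\vert>\tfrac{(r-1)N}{r(r+1)}$. Indeed, Lemma~\ref{lemma: G2 properties if connected} gives that $u,u'$ have at least $\tfrac{(r-3)N}{r(r+1)}$ common neighbours in each of $G_2$ and $G_3$; since $r\geq4$ by Corollary~\ref{cor: mindeg r >3} this is positive, so in particular $N_{G_{\overline c}}(u)\cap N_{G_{\overline c}}(u')\neq\emptyset$. Feeding this into Lemma~\ref{lemma: useful bound on nhood of pairs}, taking both copies of $\{2,3\}$ to have common colour $i_1=i_2=\overline c$ (hence $j_1=j_2=c$) and using $\delta(G_c)=\max(\delta(G_2),\delta(G_3))$, gives the \emph{stronger} conclusion $\vert N_{G_c}(u)\cap N_{G_c}(u')\vert>\tfrac{(r-1)N}{r(r+1)}$, as claimed. (The weaker bound $\tfrac{(r-3)N}{r(r+1)}$ coming straight out of Lemma~\ref{lemma: G2 properties if connected} is only enough to close the argument for $r\geq6$; routing everything through the colour of largest minimum degree is exactly what rescues $r\in\{4,5\}$.)

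Granting the claim, I would double-count incidences. Fix $v\in B$ and put $T:=N_{G_c}(v)$; by~\eqref{eq: ub and lb bound on delta(G_i)} we have $\vert T\vert\leq\Delta(G_c)<N/r$ and $\vert N_{G_c}(t)\vert\leq\Delta(G_c)<N/r$ for every $t\in T$. Counting pairs $(v',t)$ with $v'\in B$, $t\in T$, $v't\in G_c$ in two ways and applying the claim to each $\{v,v'\}$ with $v'\in B\setminus\{v\}$,
\begin{align*}
(\vert B\vert-1)\,\frac{(r-1)N}{r(r+1)}<\sum_{v'\in B\setminus\{v\}}\bigl\vert N_{G_c}(v')\cap T\bigr\vert\leq\sum_{t\in T}\bigl\vert N_{G_c}(t)\bigr\vert\leq\vert T\vert\cdot\frac{N}{r}<\frac{N^2}{r^2},
\end{align*}
so $\vert B\vert<1+\tfrac{(r+1)N}{r(r-1)}$. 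Combined with $\vert B\vert\geq N/2$ this yields $N(r^2-3r-2)<2r(r-1)$, where $r^2-3r-2>0$ since $r\geq4$. For $r\geq5$ this contradicts $N\geq r+1$ (which itself follows from $\delta(G_1)\leq N-1$ and $\delta(G_1)>\tfrac{(r-1)N}{r}$); for $r=4$ it reduces to $N<12$, leaving only $5\leq N\leq11$. Those remaining cases are routine: for $N\in\{5,6\}$ the bounds $\Delta(G_2)<N/4$ and $\delta(G_2)\geq1$ force $G_2$ to be $1$-regular, which is impossible for $N=5$ and disconnected for $N=6$; and for $7\leq N\leq11$ one has $\delta(G_1)\geq N-2$ and $\delta(G_3)\geq2$, so for any edge $uu'\in G_2$ the absence of a rainbow triangle gives $N_{G_1}(u)\cap N_{G_3}(u')=\emptyset$, and since $u\notin N_{G_3}(u')$ (as $G_2\cap G_3=\emptyset$ by Lemma~\ref{lem: mindeg, no 23 edge}) this forces $d_{G_3}(u')\leq1$, contradicting $\delta(G_3)\geq2$.

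I expect the main obstacle to be the claim: one must realise that the argument has to be channelled through the colour $c$ of largest minimum degree in order to unlock the sharper conclusion of Lemma~\ref{lemma: useful bound on nhood of pairs}, since the plain bound is too weak for $r\in\{4,5\}$. The finite casework for $r=4$ is the only other fiddly point, and it is entirely routine.
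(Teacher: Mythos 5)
Your proof is correct, and it runs on the same two engines as the paper's: Lemma~\ref{lemma: G2 properties if connected} to get non-empty common neighbourhoods for same-part pairs, upgraded via the `furthermore' part of Lemma~\ref{lemma: useful bound on nhood of pairs} to the bound $\frac{(r-1)N}{r(r+1)}$ in the colour of largest minimum degree, and then a double count against $\Delta(G_c)<\frac{N}{r}$ from~\eqref{eq: ub and lb bound on delta(G_i)}. The execution differs in two respects. The paper splits into the cases $\delta(G_2)\geq\delta(G_3)$ and $\delta(G_3)>\delta(G_2)$, and in the latter it invokes Lemma~\ref{lemma: G3 structure} to decide whether the common $G_3$-neighbourhoods of pairs in $B$ live in $A$ or in $B$, because its count sums $\binom{d}{2}$ over the relevant part across \emph{all} pairs $bb'\in B^{(2)}$; your count is anchored at a single vertex $v\in B$ and only bounds $\sum_{t\in N_{G_c}(v)}d_{G_c}(t)$, so it never needs to localise the common neighbourhoods, which lets you dispense with the case split and with Lemma~\ref{lemma: G3 structure} altogether --- a genuine streamlining. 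The price is that the anchored count is quantitatively weaker (one factor of $\vert B\vert$ instead of two), so whereas the paper's inequality yields a contradiction for every $N$ once $r\geq 4$, yours leaves the residual cases $r=4$, $5\leq N\leq 11$; your patch for these (a $1$-regular $G_2$ being impossible or disconnected for $N\in\{5,6\}$, and for $7\leq N\leq 11$ the bounds $\delta(G_1)\geq N-2$, $\delta(G_3)\geq 2$ forcing $d_{G_3}(u')\leq 1$ across any $G_2$-edge, using $G_2\cap G_3=\emptyset$ from Lemma~\ref{lem: mindeg, no 23 edge}) checks out, as does your use of Corollary~\ref{cor: mindeg r >3} to guarantee $r\geq 4$ and hence the non-emptiness needed to trigger the `furthermore' clause. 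So the argument is complete, trading the paper's structural case analysis for a small amount of finite case-checking.
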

\begin{proof}
Suppose for a contradiction that $G_2$ is connected. Assume without loss of generality that $\vert A \vert \leq \vert B \vert$.

\noindent \textbf{Case 1: $\delta(G_2) \geq \delta(G_3)$.} Lemma~\ref{lemma: G2 properties if connected} together with the `furthermore' part of Lemma~\ref{lemma: useful bound on nhood of pairs} implies that for every pair of vertices $b,b'\in B$, the joint neighbourhood $N_{G_2}(b)\cap N_{G_2}(b')$ is a subset of $A$ of size at least  $\frac{r-1}{r(r+1)}N$.  Combining this piece of information with the upper bound on $\Delta(G_2)$ from~\eqref{eq: ub and lb bound on delta(G_i)} and double counting, we get
\begin{align} 
  \frac{(r-1)N^3-2(r-1)N^2}{8r(r+1)} \leq \binom{\vert B\vert}{2} \frac{r-1}{r(r+1)}N &< \sum_{bb'\in B^{(2)}} \vert N_{G_2}(b)\cap N_{G_2}(b')\vert \notag\\
  &=\sum_{a\in A} \binom{d_{G_2}(a)}{2} \leq \vert A\vert \binom{\Delta_2(G)}{2}< \frac{N^3- rN^2}{4r^2}.\label{eq: case 1 L35 contradiction} 
\end{align}
Rearranging terms, this yields the inequality $\left(r(r-3)-2\right)N^3+4rN^2<0$, which is a contradiction since $r\geq 4$.

\noindent \textbf{Case 2: $\delta(G_3) > \delta(G_2)$.} By Lemma ~\ref{lemma: G3 structure} we have $G_{3} \subseteq \left(A,B\right)^{(2)}$ or $G_{3} \subseteq A^{(2)} \cup B^{(2)}$. If $G_{3} \subseteq \left(A,B\right)^{(2)}$, the proof follows by using exactly the same argument as in the previous case (with colours $2$ and $3$ interchanged), as Lemma~\ref{lemma: G2 properties if connected} implies that $G_{3}$ is also a connected bipartite graph with respect to the same vertex-partition as $G_{2}$. 

Hence it suffices to consider the case when $G_{3} \subseteq A^{(2)} \cup B^{(2)}$. 
By Lemma ~\ref{lemma: G2 properties if connected}, for every pair of vertices $b,b' \in B$ the joint neighbourhood $N_{G_2}(b)\cap N_{G_2}(b')$ is non-empty, and thus by the `furthermore' part of Lemma~\ref{lemma: useful bound on nhood of pairs} the joint neighbourhood $N_{G_3}(b)\cap N_{G_3}(b')$ is a subset of $B$ of size at least $\frac{r-1}{r(r+1)}N$. Hence in a similar fashion to the previous case, using the upper bound on $\Delta(G_3)$ from~\eqref{eq: ub and lb bound on delta(G_i)} and double counting, we have 
\begin{align*} 
  \left(\frac{\vert B\vert N}{2r}\right)\frac{\left(\vert B\vert -1 \right)(r-1) }{r+1}=\binom{\vert B\vert}{2} \frac{r-1}{r(r+1)}N &< \sum_{bb'\in B^{(2)}} \vert N_{G_3}(b)\cap N_{G_3}(b')\vert \\
  &=\sum_{b\in B} \binom{d_{G_3}(b)}{2} \leq \vert B\vert \binom{\Delta_3(G)}{2}<\left(\frac{\vert B\vert N}{2r} \right) \frac{N-r}{r}.
\end{align*}
Rearranging the inequality above and using $\vert B\vert \geq N/2$ then yields the desired contradiction:
\[  0> \vert B\vert (r-1)r - (r+1) N   +2r\geq   \frac{r(r-3)-2}{2}N + 2r>0.\]
\end{proof}

\begin{lemma}\label{lemma: G2 not connected, r=4}
Suppose that $r = 4$ and $\delta(G_2) < \frac{N}{6}=\frac{2}{3}\cdot\frac{N}{4}$. Then $G_{2}$ is not connected. 
\end{lemma}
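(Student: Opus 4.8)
The plan is to suppose, for contradiction, that $G_2$ is connected, and to derive from this that $N$ is bounded by an absolute constant, contradicting the fact that the standing hypotheses force $N$ to be large. First I record what is available. By Lemmas~\ref{lem: mindeg, no 23 edge},~\ref{lem: mindeg, G2 bipartite} and~\ref{lemma: G3 structure}, $G_2\cap G_3=\emptyset$, $G_2$ is bipartite with parts $A\sqcup B$, and either $G_3\subseteq(A,B)^{(2)}$ or $G_3\subseteq A^{(2)}\cup B^{(2)}$. From~\eqref{eq: ub and lb bound on delta(G_i)} with $r=4$ we have $\tfrac{3N}{20}<\delta(G_c)\leq\Delta(G_c)<\tfrac N4$ for $c\in\{2,3\}$, and since $\delta(G_2)<\tfrac N6$ we get $\delta(G_3)>\tfrac{2N}{5}-\tfrac N6=\tfrac{7N}{30}>\delta(G_2)$; in particular $\delta(G_3)=\max(\delta(G_2),\delta(G_3))$, so the ``furthermore'' clause of Lemma~\ref{lemma: useful bound on nhood of pairs} yields $\lvert N_{G_3}(u)\cap N_{G_3}(u')\rvert>\tfrac{3N}{20}$ whenever $u$ and $u'$ share a common $G_2$-neighbour, and also (by first using the weak bound to produce a common $G_2$-neighbour and re-applying the lemma) whenever they share a common $G_3$-neighbour.

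The key new ingredient, replacing Lemma~\ref{lemma: G2 properties if connected} (whose hypothesis $\delta(G_2)\geq\tfrac N6$ is unavailable here), is a propagation claim: \emph{if $u,u''$ lie on the same side of the bipartition $A\sqcup B$ and in the same component of $G_2$, then $\lvert N_{G_3}(u)\cap N_{G_3}(u'')\rvert>\tfrac{3N}{20}$.} I would prove this by induction on $\mathrm{dist}_{G_2}(u,u'')$, which is even. For $\mathrm{dist}=0$ this is just $\delta(G_3)>\tfrac{3N}{20}$, and for $\mathrm{dist}=2$ it follows from the strong bound above applied to a common $G_2$-neighbour of $u$ and $u''$. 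For $\mathrm{dist}\geq4$, let $u'$ be the vertex at distance $2$ from $u$ along a shortest path from $u$ to $u''$; then $N_{G_3}(u)\cap N_{G_3}(u')$ and $N_{G_3}(u')\cap N_{G_3}(u'')$ are subsets of $N_{G_3}(u')$ of size $>\tfrac{3N}{20}$ each (the first by the $\mathrm{dist}=2$ argument, the second by the inductive hypothesis), while $\lvert N_{G_3}(u')\rvert\leq\Delta(G_3)<\tfrac N4<2\cdot\tfrac{3N}{20}$; hence these two subsets intersect, so $u$ and $u''$ have a common $G_3$-neighbour, and the strong bound completes the step.

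Granting the claim, assume $G_2$ is connected; then every pair of vertices within $A$, and within $B$, has more than $\tfrac{3N}{20}$ common neighbours in $G_3$. I would conclude by double-counting cherries of $G_3$, splitting on the two cases of Lemma~\ref{lemma: G3 structure}. If $G_3\subseteq(A,B)^{(2)}$, the common $G_3$-neighbours of a pair in $A$ lie in $B$ and vice versa, so $\binom{\lvert A\rvert}{2}\tfrac{3N}{20}<\lvert B\rvert\binom{\Delta(G_3)}{2}<\lvert B\rvert\tfrac{N^2}{32}$ and, symmetrically, $\binom{\lvert B\rvert}{2}\tfrac{3N}{20}<\lvert A\rvert\tfrac{N^2}{32}$; adding these yields $\lvert A\rvert(\lvert A\rvert-1)+\lvert B\rvert(\lvert B\rvert-1)<\tfrac{5N}{12}(\lvert A\rvert+\lvert B\rvert)=\tfrac{5N^2}{12}$, which with $\lvert A\rvert^2+\lvert B\rvert^2\geq\tfrac{N^2}{2}$ forces $N<12$. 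If instead $G_3\subseteq A^{(2)}\cup B^{(2)}$, the common $G_3$-neighbours of a pair in $A$ lie in $A$, giving $\binom{\lvert A\rvert}{2}\tfrac{3N}{20}<\lvert A\rvert\tfrac{N^2}{32}$, hence $\lvert A\rvert<\tfrac{5N}{12}+1$ and likewise $\lvert B\rvert<\tfrac{5N}{12}+1$, so $N=\lvert A\rvert+\lvert B\rvert<\tfrac{5N}{6}+2$ and again $N<12$. In each case we contradict the largeness of $N$ (indeed $\delta(G_2)>\tfrac{3N}{20}$ together with $\delta(G_2)<\tfrac N6$ and $\delta(G_2)\in\mathbb N$ already rules out small $N$, so for $N$ below the threshold there is nothing to prove). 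Thus $G_2$ is disconnected.

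The main obstacle, and the reason this regime is singled out, is exactly the loss of Lemma~\ref{lemma: G2 properties if connected}: with $\delta(G_2)$ this small one cannot spread ``having a common neighbour'' along $G_2$ using $G_2$-neighbourhoods. The remedy is to propagate through $G_3$-neighbourhoods instead, which is viable here only because (i) $\delta(G_2)<\tfrac N6$ forces $\delta(G_3)>\delta(G_2)$, keeping the strong bound of Lemma~\ref{lemma: useful bound on nhood of pairs} in force, and (ii) at $r=4$ one has $2\cdot\tfrac{3N}{20}>\tfrac N4>\Delta(G_3)$, which is precisely what lets two large subsets of a single $G_3$-neighbourhood overlap. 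The routine points to watch are getting the colour slots right in each invocation of Lemma~\ref{lemma: useful bound on nhood of pairs} (a common $G_2$-neighbour yields the strong $G_3$-bound immediately, whereas a common $G_3$-neighbour needs the weak bound followed by a second application) and confirming that the leftover small-$N$ range is genuinely vacuous.
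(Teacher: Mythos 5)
Your proof is correct, but it takes a genuinely different route from the paper. The paper splits according to the two alternatives of Lemma~\ref{lemma: G3 structure}: when $G_3\subseteq (A,B)^{(2)}$ it swaps the roles of colours $2$ and $3$ (legitimate here since $\delta(G_3)>\tfrac{2N}{5}-\delta(G_2)>\tfrac{N}{6}$, so Lemma~\ref{lemma: G2 properties if connected} applies to $G_3$) and repeats the double count of~\eqref{eq: case 1 L35 contradiction}; when $G_3\subseteq A^{(2)}\cup B^{(2)}$ it runs an ad hoc argument, first proving $\vert B\vert<\tfrac{15}{29}N$ by averaging over $G_1[A,B]$, then showing directly (via a common $G_1$-neighbour) that any two vertices of $A$ share a $G_2$-neighbour, and finally double counting $G_3[A]$ to get $\vert A\vert<\tfrac{14}{29}N$, a contradiction --- notably, that second case does not even use connectivity of $G_2$. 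You instead prove a single propagation claim replacing Lemma~\ref{lemma: G2 properties if connected}: common neighbourhoods are spread along $G_2$-paths through \emph{$G_3$}-neighbourhoods, which works exactly because $\delta(G_2)<\tfrac{N}{6}$ forces $\delta(G_3)=\max(\delta(G_2),\delta(G_3))$ (so the strong $\tfrac{3N}{20}$ bound of Lemma~\ref{lemma: useful bound on nhood of pairs} is available for colour $3$) and because $2\cdot\tfrac{3N}{20}>\tfrac{N}{4}>\Delta(G_3)$ lets two such sets inside one $G_3$-neighbourhood overlap; your induction on the (even) $G_2$-distance, the two-step invocation of Lemma~\ref{lemma: useful bound on nhood of pairs} (weak bound to get a common $G_2$-neighbour, then the ``furthermore'' clause), and the two symmetric cherry counts giving $N<12$ are all correct, and the residual range $N\leq 11$ is indeed vacuous since no integer lies strictly between $\tfrac{3N}{20}$ and $\tfrac{N}{6}$ there (that would force $N\geq 7$ and $N\leq 6$ simultaneously). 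What your approach buys is uniformity --- both structural alternatives for $G_3$ are dispatched by the same count, with no analogue of the $\tfrac{15}{29}$ claim and no appeal to the earlier connected-case machinery; what the paper's approach buys is that it avoids the induction and closes the argument for every $N$ outright, with no integrality/vacuity check at the end, by recycling lemmas already in place.
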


\begin{proof}
If $G_{3}$ is also bipartite with respect to the vertex classes $A$ and $B$, we are done as in Case~1 in the proof of Lemma ~\ref{lemma: mindeg done if G2 connected} (with colours $2$ and $3$ interchanged): $G_2$ being connected implies $G_3$ is a bipartite connected graph with minimum degree $\delta(G_3)>\frac{2N}{5}-\delta(G_2)>\frac{N}{6}$, which leads to a contradiction as in~\eqref{eq: case 1 L35 contradiction}.

Hence we may assume that $G_{3} \subseteq A^{(3)} \cup B^{(3)}$.  Set $\vert A \vert = \alpha N$ and $\vert B \vert = \beta N$, and assume without loss of generality that $\alpha \leq \frac{1}{2} \leq \beta$. We claim that $\vert A\vert$ and $\vert B\vert$ have almost balanced sizes, i.e.\ that $\beta$ is not much larger than $1/2$. 
\begin{claim}\label{claim> beta is not too large, r=4 conn}
	 $\beta < \frac{15}{29}$
\end{claim}
\begin{proof}
Since $\delta(G_1) > \frac{3N}{4}$, by averaging there exists a vertex $b \in B$ sending at least
\begin{align*}
\frac{\vert G_1\left[A,B\right]\vert }{\vert B \vert} \geq \frac{\left(\delta(G_1)-\vert A\vert\right)\vert A\vert}{\vert B\vert}  >\frac{\left(\frac{3N}{4} - \alpha N\right) \alpha N}{\beta N} =\frac{\left(\beta-\frac{1}{4}\right) (1-\beta) }{\beta}N.
\end{align*}
edges of $G_1$ into $A$. Let $a\in N_{G_2}(b)$. Then 
$N_{G_3}(a)$ and $N_{G_1}(b)\cap A$ are disjoint subsets of $A$ (otherwise we have a rainbow triangle), and so
\begin{align*}
\vert A\vert = (1-\beta) N\geq \vert N_{G_3}(a)\vert + \vert N_{G_1}(b)\cap A\vert \geq\delta(G_3)+ \frac{ \left(\beta-\frac{1}{4}\right) (1-\beta) }{\beta}N.
\end{align*}
Since $\delta(G_3) > \frac{2N}{5} - \frac{N}{6} = \frac{7N}{30}$, solving the inequality above yields $\beta < \frac{15}{29}$ as desired. 
\end{proof}

\begin{claim}\label{claim: common neighbours in col 3 in B, r=4}
For any two vertices $a ,a' \in A$, there exist $b\in B$ and $a''\in A$ such that $aa''$, $a'a''$ both lie in $G_{3}$ and $ab$, $a'b$ both lie in $G_2$.
\end{claim}
\begin{proof}
Fix $a,a'\in A$. By Lemma~\ref{lemma: useful bound on nhood of pairs}, these two vertices have a joint neighbour in $G_2$ (which must lie in $B$) if and only if they have a joint neighbour in $G_3$ (which must lie in $A$).	Observe that by ~\eqref{eq: ub and lb bound on delta(G_i)} $\Delta(G_3)<\frac{N}{r}=\frac{N}{4}$.
	

  Suppose for a contradiction that $N_{G_2}(a)$, $N_{G_2}(a')$ are disjoint subsets of $B$. Since $\vert A  \vert \leq \frac{N}{2}$ and $\delta(G_1) > \frac{3N}{4}$, $\vert N_{G_1}(a)\cap N_{G_1}(a')\vert > \vert A\vert$, and hence there exists $b \in B$ such that  $ab$, $a'b$ both lie in $G_1$.  But then $N_{G_3}(b)$, $N_{G_2}(a)$ and $N_{G_{2}}(a')$ must be pairwise disjoint subsets of $B$ (otherwise we have a rainbow triangle). However, as $\Delta(G_3) \leq \frac{N}{4}$, we have $\delta(G_2) > \frac{2N}{5}-\frac{N}{4}=\frac{3N}{20}$; together with the bound $\vert B\vert\leq \frac{15N}{29}$ from Claim~\ref{claim> beta is not too large, r=4 conn}, this yields a contradiction:
\begin{align*}
\vert B\vert \geq \vert N_{G_{3}}(b)\vert + \vert N_{G_2}(a) \vert + \vert N_{G_2}(a') \vert \geq 2\delta(G_2) + \delta(G_3) > \frac{11N}{20} >\frac{15}{29N}\geq  \vert B \vert.
\end{align*}
\end{proof}
\noindent
Thus any two vertices $a,a' \in A$ have a common neighbour in $G_3[A]$, and hence strictly more than $\frac{3N}{20}$ such neighbours by Lemma~\ref{lemma: useful bound on nhood of pairs}. Arguing as in the proof of Lemma~\ref{lemma: mindeg done if G2 connected} we double count edges of $G_3[A]$ and use the upper bound on $\Delta(G_3)$ from~\eqref{eq: ub and lb bound on delta(G_i)} to get 
\begin{align*}
\binom{\vert A\vert}{2} \frac{3N}{20} < \vert A\vert \binom{\Delta(G_3)}{2} \leq \frac{\vert A \vert}{2} \frac{N}{4}\left(\frac{N}{4}-1\right).
\end{align*}
Dividing through by $\vert A\vert N/40 $ and rearranging terms, we get $\vert A\vert < \frac{5}{12}N-\frac{2}{3}< \frac{14N}{29}$, contradicting Claim~\ref{claim> beta is not too large, r=4 conn}.
It follows that $G_2$ cannot be connected.
\end{proof}

\noindent By~\eqref{eq: ub and lb bound on delta(G_i)}, for $r\geq 5$ we have $\delta(G_2)\geq \frac{r-1}{r(r+1)}\geq\frac{2}{3r}$. Thus Lemma~\ref{lemma: mindeg done if G2 connected} and Lemma~\ref{lemma: G2 not connected, r=4} together imply that $G_2$ is disconnected. In particular, there exist partitions $A = V_{1} \sqcup V_{4}$ and $B = V_{2} \sqcup V_{3}$ of $A$ and $B$ into a total of four disjoint non-empty sets $V_{1},\dots,V_{4}$ so that there are no edges of colour $2$ in $(V_{1}\cup V_{2}, V_{3}\cup V_{4})^{(2)}$. Furthermore, we may assume without loss of generality that $G_{2}\left[V_{1} \cup V_{2}\right]$ is connected.

 Our next aim is to show that $G_{3}$ is also bipartite with edges only in $\left(V_{1},V_{4}\right)^{(2)}$ and $\left(V_{2},V_{3}\right)^{(2)}$ or only in $\left(V_{1},V_{3}\right)^{(2)}$ and $\left(V_{2},V_{4}\right)^{(2)}$. Of course, these two cases are identical up to interchanging the labels of $V_3$ and $V_4$, which will allow us to treat the two alternatives $G_3 \subseteq \left(A,B\right)^{(2)}$ and $G_3 \subseteq A^{(2)} \cup B^{(2)}$ from Lemma~\ref{lemma: G3 structure} simultaneously. 

\begin{lemma} ~\label{lemma: Four parts}
There exists a choice $\left\{i,j\right\} = \left\{3,4\right\}$ so that $G_{3} \subseteq \left(V_{1},V_{i}\right)^{(2)} \cup \left(V_{2},V_{j}\right)^{(2)}$. Furthermore, each of the graphs $G_{2}\left[V_{1},V_{2}\right]$, $G_{2}\left[V_{3},V_{4}\right]$, $G_{3}\left[V_{1},V_{i}\right]$ and $G_{3}\left[V_{2},V_{j}\right]$ is connected. 
\end{lemma}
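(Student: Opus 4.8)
The plan is to handle the two alternatives of Lemma~\ref{lemma: G3 structure} in parallel, exploiting that they are interchanged by flipping the two vertex classes of a single connected component of $G_2$. Write $K_1 = V_1\cup V_2, K_2,\dots,K_m$ for the connected components of $G_2$ (so $m\geq 2$, $V_3\cup V_4 = K_2\cup\dots\cup K_m$, and each $K_t$ splits as $A_t\sqcup B_t$ with $A_t := K_t\cap A$, $B_t := K_t\cap B$ both nonempty since $\delta(G_2)>0$), and set $H := G_2\cup G_3$. A nontrivial union of connected components of $H$ would be a nontrivial $1$-cut, so by Lemma~\ref{lem:1-cut} the graph $H$ is connected; this is the engine for the whole argument.

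The first step is a localisation statement: for every $v$, the set $N_{G_3}(v)$ lies inside one component $K_t$. Indeed, if $v$ had $G_3$-neighbours $a,a'$ in distinct $G_2$-components, then (as $\mathbf{G}$ is Gallai) $N_{G_1}(v)$, $N_{G_2}(a)$ and $N_{G_2}(a')$ would be pairwise disjoint, contradicting $\delta(G_1)+2\delta(G_2)>N$ (which holds by~\eqref{eq: ub and lb bound on delta(G_i)} and the bounds following it, as $r\geq 3$). The second step is a transport statement: if $v,v'$ have a common $G_2$-neighbour $w$, then $N_{G_1}(w)$ is disjoint from both $N_{G_3}(v)$ and $N_{G_3}(v')$, so $\delta(G_1)+2\delta(G_3)>N$ forces $N_{G_3}(v)\cap N_{G_3}(v')\neq\emptyset$; walking along a shortest $G_2$-path between two vertices on the same side of a fixed $K_s$ and applying this to consecutive path vertices at even distance (the intermediate vertex being the common neighbour) shows that all vertices on that side send their $G_3$-edges into one and the same component, and that the bipartite graph formed by those $G_3$-edges is connected. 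This produces permutations $\sigma_A,\sigma_B$ of $[m]$ recording, for the $A$- and $B$-side of each component, which component its $G_3$-edges reach; when $G_3\subseteq (A,B)^{(2)}$ one checks $\sigma_B=\sigma_A^{-1}$, and when $G_3\subseteq A^{(2)}\cup B^{(2)}$ each of $\sigma_A,\sigma_B$ is an involution.

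Next I would pin $\sigma$ down via a double-avoidance estimate. Fix $u\in A_s$. Following a $G_2$-edge out of $u$ and then a $G_3$-edge lands (up to side) in component $\sigma_B(s)$, and by the Gallai property $N_{G_1}(u)$ is disjoint from the $G_3$-neighbourhood so reached; following a $G_3$-edge out of $u$ and then a $G_2$-edge lands in component $\sigma_A(s)$, and $N_{G_1}(u)$ is disjoint from that $G_2$-neighbourhood. In both alternatives of Lemma~\ref{lemma: G3 structure} these two avoided sets sit on the same side of their respective components, so if $\sigma_A(s)\neq\sigma_B(s)$ they are disjoint, of sizes at least $\delta(G_3)$ and $\delta(G_2)$; hence $\delta(G_1)\leq N-\delta(G_2)-\delta(G_3)$, contradicting $\delta(G_1)+\delta(G_2)+\delta(G_3)>N$. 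So $\sigma:=\sigma_A=\sigma_B$, which makes $\sigma$ an involution, and it is fixed-point-free because a fixed point $\sigma(s)=s$ would keep every $G_3$-edge incident with $K_s$ inside $K_s$, making $K_s$ a connected component of $H$ — impossible since $m\geq 2$. Consequently the graph on the $G_2$-components with an edge for each pair joined by a $G_3$-edge is a perfect matching; as $H$ is connected, $m=2$. It remains to read off the conclusion: $V_3\cup V_4 = K_2$, so $G_2[V_3,V_4]$ is connected; $\sigma=(1\,2)$ forces $G_3$ to consist exactly of the $G_3$-edges between $K_1$ and $K_2$, which by the $A/B$-side bookkeeping split as $G_3[V_1,V_i]\sqcup G_3[V_2,V_j]$ with $\{i,j\}=\{3,4\}$ (the value of $i$ depending on which alternative of Lemma~\ref{lemma: G3 structure} holds); and these two bipartite graphs were shown connected in the transport step.

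I expect the double-avoidance estimate to be the main obstacle, not because any single inequality is deep but because of the bookkeeping: in each of the two cases one must track which side of which $K_t$ each one- and two-step neighbourhood occupies and verify that the relevant edge triples really would form rainbow triangles. This is the point at which all three minimum-degree hypotheses are used simultaneously and where the gap between $\frac{r-1}{r}N$ and $\frac{r-1}{r+1}N$ is essential. A secondary subtlety is that the transport step must not invoke Lemma~\ref{lemma: G2 properties if connected}, whose hypothesis $\delta(G_2)\geq\frac{2}{3r}N$ may fail when $r=4$; travelling along an explicit $G_2$-path and using path midpoints as common neighbours avoids this.
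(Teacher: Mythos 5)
Your proposal is correct: every step checks out against the hypotheses available at this point of the paper ($G_2$ bipartite with respect to $A\sqcup B$, $G_2$ disconnected with $G_2[V_1\cup V_2]$ a connected component, $r\geq 4$, the bounds $\delta(G_1)+2\delta(G_c)>N$ for $c\in\{2,3\}$ and $\delta(G_1)+\delta(G_2)+\delta(G_3)>N$, and Lemma~\ref{lem:1-cut}), and it uses the same two engines as the paper, namely the transfer of common neighbourhoods between colours $2$ and $3$ (your localisation/transport steps are exactly what Lemma~\ref{lemma: useful bound on nhood of pairs} delivers) and the $1$-cut lemma. The organisation of the middle part is, however, genuinely different. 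The paper first shows every vertex sends its colour-$2$ and colour-$3$ edges into a single part $V_i$, sets $W_1,W_2$ to be the $G_3$-targets of $V_1,V_2$, deduces that $W_1,W_2$ lie in a \emph{paired} pair of parts by exhibiting a single $G_2$-edge $w_1w_2$ between them (a shortcut that replaces your three-disjoint-neighbourhood ``double-avoidance'' estimate), and then applies Lemma~\ref{lem:1-cut} once to the set $V_1\cup V_2\cup W_1\cup W_2$ to force $W_1=V_4$, $W_2=V_3$; connectivity is then read off by the same path/common-neighbour argument you use. You instead keep all $G_2$-components in play, define the side-target maps $\sigma_A,\sigma_B$, prove $\sigma_A=\sigma_B$ by your double-avoidance inequality, rule out fixed points via the $1$-cut lemma, and conclude from connectedness of $G_2\cup G_3$ that the component graph is a single matching edge, i.e.\ $m=2$. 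The paper's route is shorter; yours is more systematic and makes explicit (rather than implicit) the fact that $G_2$ has exactly two components, and it works uniformly however many components one starts with. Two small points of exposition: you assert that $\sigma_A,\sigma_B$ are permutations before verifying the relations $\sigma_B\circ\sigma_A=\mathrm{id}$ (resp.\ $\sigma_A^2=\mathrm{id}$); bijectivity should be \emph{deduced} from those relations, which your transport step does supply, so the fix is only a reordering. And for the final connectivity of $G_3[V_1,V_i]$ and $G_3[V_2,V_j]$ you should add the one-line remark that, by the matching structure and $\delta(G_3)>0$, every vertex of $V_i$ (resp.\ $V_j$) actually sends a $G_3$-edge to $V_1$ (resp.\ $V_2$), so no vertex of those classes is omitted from the connected bipartite graph built in the transport step; the paper's own proof needs, and implicitly uses, the same remark via $W_1=V_4$, $W_2=V_3$.
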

\begin{proof} We first show neighbourhoods in colours $2$ and $3$ are well-behaved with respect to the partition $V=\sqcup_{i=1}^4 V_i$.
\begin{claim}\label{claim: vertices send 2,3 edges to at most one Vi}
For every vertex $v\in V$ and every colour $c\in \{2,3\}$, $v$ sends edges in colour $c$ to exactly one of the four parts $V_i$, $i\in [4]$.
\end{claim}
\begin{proof}
Our assumptions on $G_2$ ensure that there is no vertex sending an edge of $G_2$ to two distinct parts $V_i$, $i\in [4]$, and thus no pair of vertices in distinct parts have a common neighbour in  $G_2$. By Lemma~\ref{lemma: useful bound on nhood of pairs}, this implies that no pair of vertices in distinct parts have a common neighbour in $G_3$, and hence that every vertex sends edges in colour $3$ to exactly one of the four parts $V_i$, $i\in [4]$.
\end{proof}
\noindent Let $W_{1} = \bigcup_{v\in V_1} N_{G_{3}}(v)$ and $W_{2} = \bigcup_{v\in V_2}N_{G_{3}}(v)$. 
\begin{claim}\label{claim: w1 in Vi}
 There exist distinct $i,j$ with $\left\{i,j\right\} = \left\{1,2\right\}$ or $\left\{i,j\right\} = \left\{3,4\right\}$ so that $W_{1} \subseteq V_{i}$ and $W_{2} \subseteq V_{j}$.
\end{claim} 
\begin{proof}
Claim~\ref{claim: vertices send 2,3 edges to at most one Vi} implies that $W_1$ and $W_2$ are disjoint. Further, by Lemma~\ref{lemma: useful bound on nhood of pairs}, if two vertices have a common neighbour in $G_2$, then they have a common neighbour in $G_3$ and hence, by Claim~\ref{claim: vertices send 2,3 edges to at most one Vi} again,  they must send edges of $G_3$ to the same part.

Since $G_2[V_1, V_2]$ is connected, it follows that $W_1$ and $W_2$ are subsets of two distinct parts $V_i$ and $V_j$ respectively, $i,j\in [4]$.  Now picking any edges $v_1v_2\in G_2$ and $v_2w_2\in G_3$ with $v_1\in V_1$, $v_2\in V_2$ and $w_2\in W_2$, we see by yet another application of Lemma~\ref{lemma: useful bound on nhood of pairs} that there exists $w_1 \in N_{G_3}(v_1)\cap N_{G_2}(w_2)$. Since $w_1\in W_1\subseteq V_i$ and $w_2\in V_j$, $w_1w_2\in G_2$ implies that $\{i,j\}=\{1,2\}$ or $\{3,4\}$.
\end{proof}
\noindent Set $X = V_{1} \cup V_{2} \cup W_{1} \cup W_{2}$ and $Y = V \setminus X$. 
 \begin{claim}
 	 $X \sqcup Y$ is a $1$-cut.
 \end{claim}
\begin{proof}
By Claims~\ref{claim: vertices send 2,3 edges to at most one Vi} and~\ref{claim: w1 in Vi}, we have that for  every vertex $w_1\in W_1$, $N_{G_3}(w_1)\subseteq V_1$, and similarly $N_{G_3}(w_2)\subseteq V_2$ for every $w_2\in W_2$. Thus there are no edges in colour $3$ from $X$ to $Y$.

Let $w_1\in W_1$ and let $w_2$ be such that $w_1w_2\in G_2$. Then by construction there exists $v_1\in V_1$ such that $v_1w_1\in G_3$. Applying Lemma~\ref{lemma: useful bound on nhood of pairs}, we see that $N_{G_2}(v_1)\cap N_{G_3}(w_2)$ is non-empty, which implies $w_2$ send an edge in colour $3$ to some $v_2\in V_2$, and hence that $w_2\in W_2$. Thus all edges in colour $2$ from $W_1$ lie in $W_2$; a symmetric argument shows that all edges in colour $2$ from $W_2$ lie in $W_1$. By construction, we already know all edges in colour $2$ from $V_1\sqcup V_2$ are to $V_1\sqcup V_2$. Thus there are no edges in colour $2$ from $X$ to $Y$, and $X\sqcup Y$ is a $1$-cut as claimed.
%
\end{proof}
\noindent
If $W_{1}, W_2 \subseteq V_{1} \cup V_{2}$, then $X = V_{1}\cup V_{2}$ and $Y\neq \emptyset$, which in turn implies that the $1$-cut $V = X \sqcup Y$ is non-trivial, contradicting Lemma~\ref{lem:1-cut}.  Thus $W_1, W_2\subseteq V_3\cup V_4$. By symmetry we may assume that $W_{1} \subseteq V_{4}$, whence $W_{2} \subseteq V_{3}$. For this $1$-cut to be trivial, we must then have $Y=\emptyset$ and so $W_{1} = V_{4}$ and $W_{2} = V_{3}$.  
It then follows from Lemma~\ref{lemma: G3 structure} that $G_{3} \subseteq \left(V_{1},V_{4}\right)^{(2)} \cup  \left(V_{2},V_{3}\right)^{(2)}$.

To complete the proof of the lemma, we need to verify that the graphs $G_{2}$ and $G_{3}$ are connected when restricted to the appropriate vertex class pairs. Let $i \in \left\{1,2\right\}$, and consider a pair of vertices $u, v \in V_{i}$. Since $G_{2}\left[V_{1},V_{2}\right]$ is connected, there exists a path $u = u_{0},\dots,u_{2n} = v$ in $G_{2}$ between them, with $u_{2t} \in V_{i}$ for each $t$. Since $u_{2t}$ and $u_{2(t+1)}$ have a common $G_{2}$ neighbour, Lemma ~\ref{lemma: useful bound on nhood of pairs} implies that they also have a common $G_{3}$ neighbour $w_{2t+1}$ in $V_{5-i}$. Thus $G_{3}\left[V_{i},V_{5-i}\right]$ is connected for both $i \in \left\{1,2\right\}$, and a symmetrical argument proves that $G_{2}\left[V_{3},V_{4}\right]$ is also connected. 
\end{proof}	
\noindent We are at last in a position to obtain a contradiction, in a similar spirit to the proof of Lemma ~\ref{lemma: mindeg done if G2 connected}. We first show $r$ is large, and then double count edges to conclude the proof of Theorem~\ref{theorem: r >2 case}.
\begin{claim}\label{claim: mindeg r>7}
We must have $r\geq 7$.
\end{claim}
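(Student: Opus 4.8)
The plan is to read off a contradiction directly from the four-part structure, using only the minimum-degree hypothesis on $G_1$ and the absence of rainbow triangles --- no further counting is needed. By Lemma~\ref{lemma: Four parts} we may assume $G_2 \subseteq (V_1,V_2)^{(2)} \cup (V_3,V_4)^{(2)}$ and $G_3 \subseteq (V_1,V_4)^{(2)} \cup (V_2,V_3)^{(2)}$, where $V_1,\dots,V_4$ are non-empty; write $n_i := |V_i|$, so that $n_1+n_2+n_3+n_4=N$. The pairs $(V_1,V_3)^{(2)}$ and $(V_2,V_4)^{(2)}$ carry no edges of colours $2$ or $3$, so the only question about edges between these ``opposite'' parts is whether or not they lie in $G_1$.

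The key observation I would establish is a dichotomy for each opposite pair. Suppose first that $G_1$ contains an edge $ac$ with $a\in V_1$, $c\in V_3$. By the support restrictions above, $N_{G_2}(a)$ and $N_{G_3}(c)$ both lie in $V_2$, while $N_{G_3}(a)$ and $N_{G_2}(c)$ both lie in $V_4$; and the absence of a rainbow triangle through $ac$ forces $N_{G_2}(a)\cap N_{G_3}(c)=\emptyset$ and $N_{G_3}(a)\cap N_{G_2}(c)=\emptyset$. Hence $n_2\ge \delta(G_2)+\delta(G_3)$ and $n_4\ge \delta(G_2)+\delta(G_3)$, so both exceed $\frac{2N}{r+1}$. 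Symmetrically, a $G_1$-edge between $V_2$ and $V_4$ forces $n_1,n_3>\frac{2N}{r+1}$. On the other hand, if $G_1$ has \emph{no} edge between $V_1$ and $V_3$, then every $v\in V_1$ has $N_{G_1}(v)\subseteq V_1\cup V_2\cup V_4$, so $\frac{(r-1)N}{r}<\delta(G_1)\le d_{G_1}(v)\le N-1-n_3$ and thus $n_3<\frac{N}{r}$ (and symmetrically $n_1<\frac{N}{r}$, using a vertex of $V_3$); likewise an empty $G_1[V_2,V_4]$ forces $n_2,n_4<\frac{N}{r}$.

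I would then conclude by a three-way case split according to how many of $\{V_1,V_3\}$, $\{V_2,V_4\}$ are joined by a $G_1$-edge. If neither is, then all four $n_i$ lie below $\frac{N}{r}$, so $N=\sum_i n_i<\frac{4N}{r}$, contradicting $r\ge 4$ (by Corollary~\ref{cor: mindeg r >3}). If exactly one of them is --- by the symmetry swapping $V_1\leftrightarrow V_2$ and $V_3\leftrightarrow V_4$, which preserves the structure in Lemma~\ref{lemma: Four parts}, we may take it to be $\{V_1,V_3\}$ --- then $n_4>\frac{2N}{r+1}$, while emptiness of $G_1[V_2,V_4]$ gives $n_4<\frac{N}{r}$; since $\frac{2N}{r+1}\ge \frac{N}{r}$ for all $r\ge 1$, this is impossible. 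Finally, if both pairs are joined, then all four $n_i$ exceed $\frac{2N}{r+1}$, so $N>\frac{8N}{r+1}$, i.e.\ $r>7$. Hence $r\ge 7$ (in fact the argument yields $r\ge 8$). There is no substantial obstacle here: the only points needing care are verifying that the swap used in the second case leaves all the structural hypotheses intact, and noticing that the elementary inequality $\frac{2N}{r+1}\ge\frac{N}{r}$ is exactly what closes that case.
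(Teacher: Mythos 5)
Your proof is correct, and its engine is the same disjointness observation that drives the paper's own argument: a $G_1$-edge $ac$ with $a\in V_1$, $c\in V_3$ (a pair joined by no colour-$2$ or colour-$3$ edges) forces $N_{G_2}(a)$ and $N_{G_3}(c)$ to be disjoint subsets of $V_2$, and $N_{G_3}(a)$, $N_{G_2}(c)$ to be disjoint subsets of $V_4$, so that $\vert V_2\vert,\vert V_4\vert\geq \delta(G_2)+\delta(G_3)>\frac{2N}{r+1}$. Where you diverge is in how the existence of such an edge and the final count are organised. The paper avoids any case analysis: it assumes without loss of generality that $V_1$ is the largest part, so $\vert V_1\vert\geq N/4\geq N/r>N-\delta(G_1)$, which immediately yields a $G_1$-edge from any vertex of $V_3$ into $V_1$; it then finishes with the count $\vert V_1\vert\geq\vert V_2\vert>\frac{2N}{r+1}$, $\vert V_4\vert>\frac{2N}{r+1}$, $\vert V_3\vert>\frac{N}{r+1}$, giving $N>\frac{7N}{r+1}$ and hence $r\geq 7$. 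You instead run a trichotomy on which of the uncovered pairs $(V_1,V_3)$, $(V_2,V_4)$ carries a $G_1$-edge, supplementing the disjointness step with the complementary observation that an empty $G_1[V_1,V_3]$ forces $\vert V_1\vert,\vert V_3\vert<\frac{N}{r}$ (and similarly for $V_2,V_4$); the first two cases collapse against $r\geq 4$ (Corollary~\ref{cor: mindeg r >3}) and against the elementary inequality $\frac{2N}{r+1}\geq\frac{N}{r}$, and in the surviving case all four parts exceed $\frac{2N}{r+1}$, giving $N>\frac{8N}{r+1}$ and hence $r\geq 8$. Both routes are valid --- your relabelling $V_1\leftrightarrow V_2$, $V_3\leftrightarrow V_4$ indeed preserves the structure supplied by Lemma~\ref{lemma: Four parts}, and all four parts are non-empty as required --- so yours trades the paper's `largest part' normalisation for a short case analysis and, incidentally, a marginally stronger conclusion; the bound $r\geq 7$ is all that the subsequent double-counting argument needs.
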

\begin{proof}
Note our minimum degree conditions imply each of the four parts 
$V_{1},\dots,V_{4}$ are non-empty and have size at least $\max\left(\delta(G_2), \delta(G_3)\right)>\frac{N}{r+1}$. Assume without loss of generality that $V_{1}$ is the largest of the four parts, so that in particular $\vert V_{1}\vert \geq \frac{N}{4}$. Since $\delta(G_1)> (1-\frac{1}{r})N$ and since $r\geq 4$, for any $u \in V_3$ there must be a vertex $v\in V_1$ such that $uv\in G_1$.

Now, $N_{G_2}(v)$ and $N_{G_3}(u)$ are disjoint subsets of $V_2$, whence $\vert V_2 \vert \geq  \delta(G_2)+\delta(G_3)>\frac{2N}{r+1}$. Similarly $\vert V_4\vert >\frac{2N}{r+1}$.  As $\vert V_1\vert \geq \vert V_2\vert$ and $\vert V_3\vert >\frac{N}{r+1}$, this implies 
\[ N=\vert V_1\vert + \vert V_2\vert +\vert V_3\vert +\vert V_4\vert >\frac{7N}{r+1},\]
whence $r\geq 7$, as claimed.
\end{proof} 
\noindent
We may assume that $\delta(G_2) \geq \delta(G_3)$, that $\vert V_1\vert +\vert V_2\vert = N'\geq N/2$ and that $\vert V_1\vert \geq \vert V_2\vert$.  By Lemma~\ref{lemma: G2 properties if connected}, for every pair of vertices $b, b'\in V_1$ we have that $N_{G_2}(b)\cap N_{G_2}(b')$ is a subset of $V_2$ of size at least $\frac{r-1}{r(r+1)}N$. On the other hand, we know from~\eqref{eq: ub and lb bound on delta(G_i)} that each vertex $a\in V_2$ receives at most $\Delta(G_2)<N/r$ edges in colour $2$ from $V_1$. By double-counting as in the proof of Lemma ~\ref{lemma: mindeg done if G2 connected}, we get
\begin{align*}
\frac{(r-1)(N')^2 N-2(r-1)N'N}{8r(r+1)}\leq \binom{\vert V_1\vert}{2}\frac{r-1}{r(r+1)}N\leq \vert V_2\vert \binom{\Delta (G_2)}{2}<\frac{N'N^2-r N'N}{4r^2}.
\end{align*}
Rearranging terms, this yields the inequality
\[  N' N \left(  N'(r^2-r) - N(2r+2) + 4r \right) <0.\]
Since $N'(r^2-r)-N(2r+2)\geq \frac{N}{2}\left(r^2-5r-4\right)>0$ for $r\geq 7$ (which holds by Claim~\ref{claim: mindeg r>7}), this is the final contradiction.
\end{proof}

\subsection{Proof of the case $r = 2$}\label{subsection: proof of r=2}
\begin{proof}[Proof of Theorem~\ref{theorem: r =2 case}]
We now turn our attention to the $r = 2$ case. We shall follow a line of argument similar to that we used in the $r \geq 3$ case; however, as we outlined in Section~\ref{subsection: proof strategy mindegree}, for $r=2$ the crucial inequality~\eqref{eq: key ineq r at least 3} fails in general, and we must make a number of technical detours from our main proof path in order to circumvent the resulting obstacles. In particular, unlike in the proof of Theorem~\ref{theorem: r >2 case}, we will not treat the colours $2$ and $3$ interchangeably in our argument.

Suppose $\mathbf{G}$ is a Gallai $3$-olouring template on $N$ vertices with $\delta(G_1)>\frac{N}{2}$, $\delta(G_2)\geq \delta(G_3)>0$ and $\delta(G_2)+\delta(G_3)>\frac{2N}{3}$. 
From~\eqref{eq: (r-1)N/(r(r+1)) bound on delta G3} and Proposition~\ref{prop: trivial min degree bound}, we obtain the trivial bounds $\frac{N}{3} < \delta(G_2) < \frac{N}{2}$ and $\delta(G_3) > \frac{N}{6}$. Further, we have by Lemma~\ref{lem:  intersections, if large, are non-empty} (which holds for $r=2$, as noted in Remark~\ref{remark: also works for r=3}) that $\delta^+(G_i\cap G_j)>\frac{N}{6}$ for all distinct colours $i,j$ for which there exists a bi-chromatic edge in colours $ij$. As in the previous section, given a colour $c\in \{2,3\}$ we denote by $\overline{c}$ the complementary colour, $\{\overline{c}\}:=\{2,3\}\setminus \{c\}$.

 We begin as in the proof of Theorem~\ref{theorem: r >2 case} by proving that $G_{2} \cap G_{3} = \emptyset$. 
\begin{lemma}\label{lem: mindeg, no 23 edge r=2}
$G_2\cap G_3 =\emptyset$.	
\end{lemma}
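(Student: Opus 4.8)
The plan is to mimic the structure of the $r\ge 3$ proof of Lemma~\ref{lem: mindeg, no 23 edge}, but to work harder because the key inequality $\delta(G_1)+2\min(\delta(G_2),\delta(G_3))>N$ is no longer available — indeed for $r=2$ we only have $\delta(G_1)>N/2$, $\delta(G_2)\in(N/3,N/2)$ and $\delta(G_3)>N/6$, so $\delta(G_1)+\delta(G_2)+\delta(G_3)>N$ still holds (this is the workhorse), but $\delta(G_1)+2\delta(G_3)$ may be well below $N$. First I would set $X$ to be the set of vertices incident with a bi-chromatic $23$-edge, assume $X\neq\emptyset$ for contradiction, let $Y$ be the vertices outside $X$ sending a colour $2$ or $3$ edge into $X$, and $Z$ the rest. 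By Lemma~\ref{lem:  intersections, if large, are non-empty} every vertex of $X$ has at least $\delta^+(G_2\cap G_3)>\frac N6$ neighbours in $G_2\cap G_3$, all of which lie in $X$, so $\vert X\vert>\frac N6$.

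The first step is to show $Z=\emptyset$: since there is no non-trivial $1$-cut (Lemma~\ref{lem:1-cut}, valid for $r=2$ by Remark~\ref{remark: also works for r=3}), some $z\in Z$ sends a colour-$c$ edge ($c\in\{2,3\}$) to a vertex $y\in Y$, and by construction $y$ sends a colour edge to some $x\in X$. The Gallai condition forces $N_{G_1}(y)$, $N_{G_{\overline c}}(z)$ and $N_{G_2\cap G_3}(x)$ to be pairwise disjoint; the issue is that $\vert N_{G_1}(y)\vert+\vert N_{G_{\overline c}}(z)\vert+\vert N_{G_2\cap G_3}(x)\vert$ need only exceed $\delta(G_1)+\delta(G_3)+\frac N6>\frac N2+\frac N6+\frac N6=\frac{5N}{6}$, which does \emph{not} contradict their fitting inside $V$. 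So the $r\ge3$ argument breaks exactly here. The remedy I would pursue: sharpen the choice of which bi-chromatic neighbourhood to use. If $c=2$ then $\overline c=3$ and we use $N_{G_3}(z)$ (size $>\frac N6$); but if instead $c=3$ then $\overline c=2$ and $N_{G_2}(z)$ has size $>\frac N3$, giving $\delta(G_1)+\frac N3+\frac N6=N$ — still only barely a contradiction, and only in the $c=3$ case. So one must argue that one can always reduce to a configuration where the colour "entering $Y$ from outside" is colour $3$, or else exploit that $x$ being in $X$ gives it a colour-$2$ \emph{and} colour-$3$ neighbourhood inside $X$, not just one bi-chromatic class. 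A cleaner route: pick $x\in X$, $y\in Y$ with $xy\in G_j$; since $x$ has $>\frac N6$ bi-chromatic $23$-neighbours in $X$, if $j=2$ we can find $x'\in N_{G_2\cap G_3}(x)$ and look at the triangle $x'xy$ — $x'x\in G_3$, $xy\in G_2$ forces $x'y\notin G_1$; iterating one can build up, via the $G_2\cap G_3$ clique-like structure on $X$, enough disjointness to beat $N$. I expect the actual proof will instead proceed by first establishing (using only $\delta(G_1)+\delta(G_2)+\delta(G_3)>N$, which suffices for many one-triangle arguments) that $G_3\subseteq G_2$ or some analogous containment forces every vertex into $X$, then derive $\vert X\vert=N$ and contradict $\delta(G_1)>N/2$ directly as in Claim~\ref{claim: X is not too small}.

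Concretely, here is the cleanest line I would try to push through. Consider any edge $xx'\in G_3$ with $x\in X$; the Gallai condition makes $N_{G_1}(x)$ and $N_{G_2}(x')$ disjoint, but $\delta(G_1)+\delta(G_2)$ may be as low as $\frac N2+\frac N3=\frac{5N}{6}<N$, so this alone does not force $xx'\in G_1\cap G_2$. However $x\in X$ means $x$ has $>\frac N6$ bi-chromatic $23$-neighbours; combining $\delta(G_1)+\delta(G_2)+\delta^+(G_2\cap G_3)>\frac N2+\frac N3+\frac N6=N$ and the pairwise disjointness of $N_{G_1}(x)$, $N_{G_2}(x')$ and $N_{G_2\cap G_3}(z)$ for a suitable bi-chromatic neighbour $z$ of $x'$ — if $x'$ also lies in $X$ — gives a contradiction, so $N_{G_2}(x')$ must meet $N_{G_1}(x)$ whenever $x'\in X$, i.e.\ every colour-$3$ edge inside $X$ is in fact in $G_1\cap G_2\cap G_3$, hence a rainbow edge, contradiction. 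This would show $G_3[X]$ is empty, but $X$ was defined as vertices incident to a $G_2\cap G_3$ edge, which are in particular incident to a $G_3$ edge; such an edge $xx'$ has both endpoints in $X$ (as $x'\in N_{G_3}(x)\subseteq$ ... here one needs $N_{G_3}(x)\subseteq X$, which follows from Lemma~\ref{lem:  intersections, if large, are non-empty} exactly as in the $r\ge3$ proof), giving the contradiction. The main obstacle is precisely verifying that the colour-$3$ neighbourhood of a vertex of $X$ stays inside $X$ and that the numerology $\frac N2+\frac N3+\frac N6=N$ can be pushed to a \emph{strict} contradiction using the strict inequalities $\delta(G_1)>N/2$, $\delta(G_2)>N/3$, $\delta^+(G_2\cap G_3)>N/6$ — which it can, since all three are strict. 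I would therefore organise the proof of Lemma~\ref{lem: mindeg, no 23 edge r=2} as: (i) $\vert X\vert>N/6$ and $N_{G_2}(x),N_{G_3}(x)\subseteq X$ for $x\in X$ via Lemma~\ref{lem:  intersections, if large, are non-empty}; (ii) every $G_3$-edge (equivalently $G_2$-edge) inside $X$ is rainbow, contradiction; hence $X=\emptyset$.
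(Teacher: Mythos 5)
There is a genuine gap, and it sits exactly at the two steps you label (i) and (ii) of your ``cleanest line''. For (i): the containments $N_{G_2}(x)\subseteq X$ and $N_{G_3}(x)\subseteq X$ for $x\in X$ do not follow from anything proved so far. The lemma on minimum positive degrees only says that $\delta^+(G_2\cap G_3)>\frac{N}{6}$; the only neighbourhood of $x$ that lies in $X$ by definition is $N_{G_2\cap G_3}(x)$, while a single-colour neighbour of $x$ in $G_2$ or $G_3$ need not be incident to any bi-chromatic $23$-edge. This is precisely why the buffer set $Y$ (vertices outside $X$ sending a colour $2$ or $3$ edge into $X$) has to be introduced and analysed: the paper's actual proof spends a sequence of claims showing $G_1[X]=\emptyset$, $\vert Z\vert>\frac{N}{3}$, that $G_3$ has no edges between the parts, that $G_2[Y]$, $G_1[X,Y]$ and $G_1[Y,Z]$ are empty, and only then forces $Y=\emptyset$ and derives a size contradiction from $\vert X\vert>\frac{N}{3}$ and $\vert Z\vert>\frac{2N}{3}$. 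Your sketch skips all of this by assuming the colour-$3$ neighbourhood of $X$ stays in $X$.

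For (ii): the pairwise disjointness you need fails. If $z\in N_{G_2\cap G_3}(x')$, then a common vertex $w$ of $N_{G_2\cap G_3}(z)$ and $N_{G_2}(x')$ spans the triangle $x'zw$ whose three pairs lie only in colours $2$ and $3$, so no rainbow triangle is forced; and a common vertex of $N_{G_2\cap G_3}(z)$ and $N_{G_1}(x)$ does not even close a triangle, since $xz$ need not be an edge. Taking $z=x'$ instead makes the third set a subset of $N_{G_2}(x')$. So the count $\delta(G_1)+\delta(G_2)+\delta^+(G_2\cap G_3)>N$ cannot be applied to three pairwise disjoint sets, and with only the local data ``$xx'\in G_3$, $x,x'\in X$'' no such triple of disjoint sets of sizes $>\frac{N}{2},\frac{N}{3},\frac{N}{6}$ exists. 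Moreover, even if you could show that every $G_3$-edge inside $X$ lies in $G_1\cap G_2\cap G_3$, a rainbow \emph{edge} is not a contradiction: rainbow edges are exactly what this lemma is trying to exclude, so invoking their impossibility is circular. (In the $r\geq 3$ proof the analogous deduction was not a contradiction either; it was a stepping stone, valid only in the regime $N<r(r+1)$, towards a separate numerical contradiction.) Finally, your attempted repair of the ``$Z=\emptyset$'' step is explicitly left open, so the proposal as written does not establish the lemma.
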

\begin{proof}
Again, set $X$ to be the set of vertices in $V$ incident with a bi-chromatic edge in colours $23$ from the  graph $G_2\cap G_3$. Suppose for a contradiction that $X\neq \emptyset$. Let $Y$ denote the set of $y\in V\setminus X$ such that $y$ sends at least one edge in colour $2$ or $3$ to $X$. Finally, let $Z:= V\setminus (X\cup Y)$ denote the rest of the vertices in $V$. We will analyse the structure of the edges between and inside these three sets in a succession of claims.
%
\begin{claim}\label{cl: no 1-edge in X}
$G_1[X]$ is empty. 
\end{claim}
\begin{proof}
Suppose there exists $x, x' \in X$ with $xx' \in G_{1}$. Since $\delta(G_1)>\frac{N}{2}$, there exists $v$ such that $xx'v$ induces a triangle in $G_{1}$.  Further, since $3\left(\delta(G_2) + \delta(G_3)\right) > 2N$, there exist $w \in V$ and $c \in \left\{2,3\right\}$ so that $\left\{x,x',v\right\} \subseteq N_{G_c}(w)$.  As we have no rainbow triangles in $\mathbf{G}$, the sets $N_{G_1}(w)$, $N_{G_2 \cap G_3}(x)$, $N_{G_2 \cap G_3}(x')$ and $N_{G_{\overline{c}}}(v)$ are pairwise disjoint (and non-empty, since $x,x'\in X$). Since \[\delta(G_1) + 2\delta^+(G_2\cap G_3)+\min(\delta(G_2),\delta(G_3)) > \frac{N}{2}+2\cdot\frac{N}{6}+\frac{N}{6}>N,\] this gives a contradiction. 
\end{proof}
\begin{claim}\label{cl: Z and XcupZ large}
	$\vert Z\vert  > \frac{N}{3}$.
\end{claim}
\begin{proof}
	For any $y \in Y$, there exists $x \in X$ so that $xy \in G_{2} \cup G_{3}$. Since $N_{G_2 \cap G_3}(x)$ contains at least $\delta^+(G_2\cap G_3)>\frac{N}{6}$ elements and is a subset of $X$ disjoint from $N_{G_1}(y)$, it follows that $\vert G_1[X,Y] \vert \leq \vert Y\vert \left(\vert X\vert - \frac{N}{6}\right)$. Since $X$ contains no edge of $G_1$ by Claim~\ref{cl: no 1-edge in X}, counting edges in colour $1$ from $X$ we deduce that 
	\begin{align*}
	\vert X\vert \frac{N}{2} < \vert X\vert \delta(G_1) < \left\vert G_1[X,Y\cup Z] \right\vert \leq \vert Y\vert \left(\vert X\vert - \frac{N}{6}\right) + \vert X\vert \cdot \vert Z\vert.
	\end{align*}
Using $\vert Y\vert =N-\vert X\vert -\vert Z\vert$ and rearranging terms, we obtain the bound
	\begin{align*}
	\vert Z\vert > \frac{\vert X\vert^2 - \frac{2N\vert X\vert}{3} + \frac{N^2}{6}}{N/6} = \frac{\left(\vert X\vert -\frac{N}{3}\right)^2}{N/6} +\frac{N}{3}.
	\end{align*}
The claim follows immediately.
\end{proof}
\begin{claim} \label{claim: 3's isolated}
	$G_{3} \subseteq X^{(2)} \sqcup Y^{(2)} \sqcup Z^{(2)}$.
\end{claim}
\begin{proof}
	By construction, there are no edges in colour $3$ from $X$ to $Z$. If there exist $y\in Y$, $z\in Z$ with $yz\in G_3$, then by construction there exist $x\in X$ with $xy\in G_c$ for some $c\in \{2,3\}$. Since we have no rainbow triangles, $N_{G_2\cap G_3}(x)$ (which by construction is a subset of $X$), $N_{G_1}(y)$ and $N_{G_2}(z)$ (which by construction is a subset of $Y \cup Z$) must be pairwise disjoint. However the sum of the sizes of these three sets is at least $\delta^+(G_2\cap G_3)+ \delta(G_1)+\delta(G_2)>N$, a contradiction. Thus there are no edges in colour $3$ from $X\cup Y$ to $Z$.

	Suppose now there exist $xy\in G_3$ with $x\in X$ and $y\in Y$. Since $\vert Z\vert > \frac{N}{3}$ (by Claim~\ref{cl: Z and XcupZ large}), since $N_{G_2}(y) \cap N_{G_{3}}(y) = \emptyset$ (by construction of $Y$) and since $N_{G_3}(y)\cap Z=\emptyset$ (as shown in the previous paragraph), it follows from our minimum degree assumption $\delta(G_2)+\delta(G_3)>\frac{2N}{3}$ that there exists $z \in Z$ such that $yz \in G_{2}$. Now, the sets $N_{G_2}(x)$ and $N_{G_3}(z)$ are disjoint, as the first one is contained in $X \cup Y$ while the second is contained in $Z$. Further, both these sets are disjoint from $N_{G_1}(y)$ (else we have a rainbow triangle). This contradicts the fact that $\delta(G_1) + \delta(G_2) + \delta(G_3) > N$, and the claim follows. 
\end{proof}

\begin{claim}\label{cl:no 2 edge in Y, no 1 edge in X,Y}
$G_2[Y]$ and $G_1[X,Y]$ are both empty.
\end{claim}
\begin{proof}
Note that for any $y \in Y$ there exist $x \in X$ and $z \in Z$ so that $xy, yz \in G_2$. Indeed, the existence of $x$ follows from the construction of $Y$ and Claim ~\ref{claim: 3's isolated}, while the existence of $z$ follows from the facts that $N_{G_2}(y)\cap N_{G_3}(y)=\emptyset$ and $\vert Z\vert > \frac{N}{3}>N - \left(\delta(G_2) + \delta(G_3)\right)$ (by Claim~\ref{cl: Z and XcupZ large}) together with Claim ~\ref{claim: 3's isolated}. Since $N_{G_{3}}(x) \subseteq X$ and $N_{G_{3}}(z) \subseteq Z$ have size at least $\delta(G_3)$, and are both disjoint from $N_{G_{1}}(y)$ (else we have a rainbow triangle), we must have that
\[ \vert N_{G_1}(y)\cap Y\vert \geq \vert N_{G_1}(y)\vert- \vert X\setminus N_{G_{3}}(x) \vert -\vert Z\setminus N_{G_3}(z)\vert \geq  \delta(G_1)+2\delta(G_3)-\left(N-\vert Y\vert\right)>  \vert Y\vert +2\delta(G_3)-\frac{N}{2}.\]
Since $y\in Y$ was arbitrarily chosen, this implies that $\delta(G_1[Y])>\vert Y\vert +2\delta(G_3)-\frac{N}{2}$, and hence by Claim~\ref{claim: 3's isolated} that
\begin{align*}
\delta(G_1[Y]) + \delta(G_3[Y]) \geq \vert Y \vert + 3\delta(G_3) - \frac{N}{2} > \vert Y \vert,
\end{align*}
which in turn implies that there cannot be any edge of $G_2$ inside $Y$ (by the pigeon-hole principle: any such edge would yield a rainbow triangle in $Y$). This proves the first part of our claim.

For the second part, suppose that there exist $y \in Y$, $x\in X$ and $z\in Z$ so that $xy, yz \in G_{1}$. Then $N_{G_3}(x)\subseteq X$, $N_{G_3}(z)\subseteq Z$, $N_{G_3}(y)\subseteq Y$ are pairwise disjoint subsets by Claim~\ref{claim: 3's isolated}. 
Further, all three sets are disjoint from $N_{G_2}(y)$ (else we have a rainbow triangle or an edge of $G_2$ inside $Y$, violating the first part of our claim). 
Since $3\delta(G_3)+\delta(G_2)>2\cdot\frac{N}{6}+\frac{2N}{3}=N$, this gives a contradiction. Hence a vertex $y \in Y$ can send edges of $G_1$ to at most one of $X$ and $Z$. 

By construction and Claim~\ref{claim: 3's isolated}, for any $y\in Y$ there exists $x\in X$ with $xy\in G_2$. By Claim~\ref{claim: 3's isolated} and the 
absence of rainbow triangles, $N_{G_1}(y)\cap X$ and $N_{G_3}(x)$ are disjoint subsets of $X$. Thus $\vert N_{G_1}(y)\cap X\vert $ (which is either at least $\delta(G_1)-\vert Y\vert $ or zero by the previous paragraph) is at most 
\[\vert X\vert -\delta(G_3)< N-\vert Y\vert -\vert Z\vert - \frac{N}{6}< \frac{N}{2}-\vert Y\vert<\delta(G_1)-\vert Y\vert,\]
where in the penultimate inequality we used the bound $\vert Z\vert>\frac{N}{3}$ from Claim~\ref{cl: Z and XcupZ large}. We deduce that $\vert N_{G_1}(y)\cap X\vert =0$, and hence that there are no edges in colour $1$ from $X$ to $Y$.
\end{proof}
\noindent Claims~\ref{cl: no 1-edge in X} and~\ref{cl:no 2 edge in Y, no 1 edge in X,Y} together imply that $N_{G_1}(x) \subseteq Z$ for any $x \in X$, and hence we must have $\vert Z\vert  \geq \delta(G_1)> \frac{N}{2}$.
\begin{claim}\label{cl: no 1-edge in Y}
$G_1[Y,Z]$ is empty.
\end{claim}
\begin{proof}
Suppose there exist $z \in Z$ and $y \in Y$ so that $yz \in G_{1}$. Since $\vert Z\vert > \frac{N}{2}>N-\delta(G_1)$, there exists $z' \in Z$ so that $zz' \in G_{1}$. Note that by Claim~\ref{claim: 3's isolated} and the absence or rainbow triangles, $N_{G_3}(y)\subseteq Y$, $N_{G_3}(z')\subseteq Z$, $X$ and $N_{G_2}(Z)\subseteq V\setminus X$ are disjoint sets. Since $\vert X \vert \geq \delta(G_3)$ (by Claim~\ref{claim: 3's isolated}), this gives a contradiction:
\begin{align*}
\vert N_{G_3}(y)\vert +\vert N_{G_3}(z')\vert +  \vert X\vert +\vert N_{G_2}(Z)\vert\geq   3\delta(G_3) + \delta(G_2) > 2\delta(G_3) + \frac{2N}{3} > N.
\end{align*}
\end{proof}
\noindent 
Since $\vert Z\vert >\frac{N}{2}$, as observed above Claim~\ref{cl: no 1-edge in Y}, Claim~\ref{cl: no 1-edge in Y} together with our minimum degree bound $\delta(G_1)>\frac{N}{2}$ implies $Y=\emptyset$. Since by construction there are no edges of colours $2$ or $3$ between $X$ and $Z$ and no bi-chromatic edges in colours $23$ in $Z$, we must have $\vert Z \vert \geq \delta(G_2)+\delta(G_3)>\frac{2N}{3}$ and $\vert X\vert \geq \delta(G_2)>\frac{N}{3}$. This at last is our contradiction. Thus $X=\emptyset $ and  $G_{2} \cap G_{3} = \emptyset$.  
\end{proof}
\noindent
Consider now a new partition of $V$ into $X = \left\{v: N_{G_1 \cap G_{2}}(v) \neq \emptyset\right\}$ and $Y = V \setminus X$. Since $\delta(G_1)+\delta(G_2)+\delta(G_3)>N$ and since $G_2\cap G_3=\emptyset$ by Lemma~\ref{lem: mindeg, no 23 edge r=2}, the pigeon-hole principle together with Lemma~\ref{lem:  intersections, if large, are non-empty} implies that for any $x\in X$ and $y \in Y$ we have $\vert N_{G_1 \cap G_2}(x) \vert>\frac{N}{6}$ and $\vert N_{G_1 \cap G_3}(y) \vert > \frac{N}{6}$ respectively. We gather some elementary observations about the structure of $G_2$ and $G_3$ before analysing their structure with respect to $X\sqcup Y$ in greater detail.
\begin{lemma}\label{lemma: no G3 edges between X and Y no triangles in G2Y or G3X} The following hold:
	\begin{enumerate}[(i)]
		\item $G_3[X, Y]=\emptyset$;
		\item$G_2[Y]$ is triangle-free;
		\item $G_3[X]$ is triangle-free.
	\end{enumerate}
\end{lemma}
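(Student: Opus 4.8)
The plan is to derive all three parts from one mechanism: a putative bad configuration is made to yield three pairwise disjoint neighbourhoods whose sizes add up to more than $N$, contradicting $\delta(G_1)+\delta(G_2)+\delta(G_3)>N$. The inputs are the absence of rainbow triangles (and of rainbow \emph{edges}, which follows from $G_2\cap G_3=\emptyset$ by Lemma~\ref{lem: mindeg, no 23 edge r=2} and rules out degenerate ``triangles'') together with the estimate $\delta^+(G_1\cap G_j)>\frac{N}{6}$ from Lemma~\ref{lem:  intersections, if large, are non-empty} wherever $G_1\cap G_j\neq\emptyset$. Parts (i) and (iii) come out at once; part (ii) requires an additional structural input and is the main difficulty.

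For (i): if $x\in X$, $y\in Y$ and $xy\in G_3$, then $N_{G_1\cap G_2}(x)$ is disjoint from each of $N_{G_1}(y)$ and $N_{G_2}(y)$ (a common vertex together with $x,y$ would be a rainbow triangle), while $N_{G_1}(y)\cap N_{G_2}(y)=N_{G_1\cap G_2}(y)=\emptyset$ since $y\notin X$. Hence these three sets are pairwise disjoint, of total size more than $\frac{N}{6}+\delta(G_1)+\delta(G_2)>\frac{N}{6}+\frac{N}{2}+\frac{N}{3}=N$, a contradiction. For (iii): suppose $v_1v_2v_3$ is a triangle in $G_3[X]$. Since $v_3\in X$ we have $|N_{G_1\cap G_2}(v_3)|>\frac{N}{6}$, and applying the no rainbow triangle condition to the three $G_3$-edges (colouring the bi-chromatic $v_iv_3$ edges suitably, and using the absence of rainbow edges together with $G_2\cap G_3=\emptyset$ to exclude the coincidences $w\in\{v_1,v_2,v_3\}$) shows that $N_{G_1}(v_1)$, $N_{G_2}(v_2)$, $N_{G_1\cap G_2}(v_3)$ are pairwise disjoint; their total size exceeds $\delta(G_1)+\delta(G_2)+\frac{N}{6}>N$, a contradiction. (The same argument in fact rules out any $G_3$-triangle meeting $X$.)

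Part (ii) is the crux, and here the direct analogue of (iii) only yields disjoint sets of total size more than $\delta(G_1)+\delta(G_3)+\frac{N}{6}>\frac{5N}{6}$, which is not enough, colour $3$ being the small one. Instead, I would first establish the structural fact that there is no triple $a,b,c$ with $ab,bc\in G_3$, $ac\in G_2$ and $N_{G_1\cap G_3}(c)\neq\emptyset$: for such a triple the sets $N_{G_1}(a)$, $N_{G_2}(b)$, $N_{G_1\cap G_3}(c)$ are pairwise disjoint — the only subtle pair is $N_{G_1}(a)$ and $N_{G_1\cap G_3}(c)$, whose only possible common vertex is $b$, and $b$ lies in both only if $ab,bc\in G_1\cap G_3$, in which case $\{a,b,c\}$ is itself a rainbow triangle — and their sizes sum to more than $\delta(G_1)+\delta(G_2)+\frac{N}{6}>N$. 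Now suppose $v_1v_2v_3$ is a triangle in $G_2[Y]$. Its vertices lie in $Y$, so they have nonempty $G_1\cap G_3$-neighbourhoods and, by (i), have their $G_3$-neighbourhoods inside $Y$; applying the structural fact with $a=v_i$, $c=v_j$, $b\in N_{G_3}(v_i)\cap N_{G_3}(v_j)$ to each edge forces $N_{G_3}(v_1),N_{G_3}(v_2),N_{G_3}(v_3)$ to be pairwise disjoint subsets of $Y$.

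From here, some genuine work remains, and this is the step I expect to be the main obstacle. Writing $W:=N_{G_1}(v_1)\cap N_{G_1}(v_2)$, one checks $W$ is disjoint from all three $N_{G_3}(v_i)$ (each $N_{G_1}(v_k)$ avoids $N_{G_3}(v_\ell)$ for $\ell\neq k$) and that $|W|\geq d_{G_1}(v_1)+d_{G_1}(v_2)-N+d_{G_3}(v_3)>\frac{N}{6}$ (as $N_{G_1}(v_1)\cup N_{G_1}(v_2)$ avoids $N_{G_3}(v_3)$), so that $W,N_{G_3}(v_1),N_{G_3}(v_2),N_{G_3}(v_3)$ are four pairwise disjoint sets of size exceeding $\frac{N}{6}$, whence $|Y|>\frac{N}{2}$ and all the sizes involved are very tightly pinned. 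Turning this abundance of large disjoint sets into an outright contradiction is the technical heart of (ii): since at $r=2$ the purely local neighbourhood estimates miss $N$ by a $\Theta(N)$ margin, I would finish with a more global double-counting argument over $Y$ — bringing in also the sets $N_{G_1}(v_i)\cap N_{G_1}(v_j)$ for all pairs and the location of the $G_1\cap G_2$-neighbourhoods of vertices of $X$ — and I anticipate that closing this last gap (rather than the local disjointness bookkeeping above) is where the real effort lies.
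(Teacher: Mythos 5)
Parts (i) and (iii) of your argument are correct and are essentially the paper's own proofs: in each case three pairwise disjoint neighbourhoods of combined size exceeding $\delta^+(G_1\cap G_2)+\delta(G_2)+\delta(G_1)>\frac{N}{6}+\frac{N}{3}+\frac{N}{2}=N$ give the contradiction, and your use of $G_2\cap G_3=\emptyset$ to dispose of the degenerate coincidences is fine.

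Part (ii), however, is not a proof, and you say so yourself: after correctly establishing that $N_{G_3}(v_1)$, $N_{G_3}(v_2)$, $N_{G_3}(v_3)$ and $W=N_{G_1}(v_1)\cap N_{G_1}(v_2)$ are four pairwise disjoint sets of size greater than $\frac{N}{6}$, you have only accounted for $\frac{2N}{3}$ of the vertex set and defer the actual contradiction to an unspecified global double-counting over $Y$. That missing step is the entire content of (ii). The paper closes the argument locally, using two ingredients absent from your sketch. First, an averaging step: since $\delta(G_1)+\delta(G_3)>\frac{2N}{3}$, summing $d_{G_1}(u)+d_{G_3}(u)$ over the three vertices of the triangle $T=\{y,y',v\}$ (with $y,y'\in Y$) gives more than $2N$ incidences, so some vertex $w$ receives at least three edges of colour $1$ or $3$ from $T$; the absence of rainbow triangles forces these to be monochromatic in some $c\in\{1,3\}$, so $T\subseteq N_{G_c}(w)$. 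This produces a fourth disjoint set $N_{G_2}(w)$ of size at least $\delta(G_2)$, which is precisely the $\Theta(N)$ you were missing. Second, the resulting four pairwise disjoint sets $N_{G_1\cap G_3}(y)$, $N_{G_1\cap G_3}(y')$, $N_{G_{4-c}}(v)$, $N_{G_2}(w)$ have total size at least $2\delta^+(G_1\cap G_3)+\delta(G_3)+\delta(G_2)$, and one shows this exceeds $N$ not via the crude individual bounds (which, as you note, only give $\frac{N}{6}+\frac{N}{6}+\frac{N}{6}+\frac{N}{3}=\frac{5N}{6}$) but via the sharper estimate $\delta^+(G_1\cap G_3)\geq \delta(G_1)+\delta(G_2)+\delta(G_3)-N$ combined with the hypothesis in sum form: $2\bigl(\delta(G_1)+\delta(G_2)+\delta(G_3)-N\bigr)+\delta(G_2)+\delta(G_3)=2\delta(G_1)+3\bigl(\delta(G_2)+\delta(G_3)\bigr)-2N>N+2N-2N=N$. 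So your diagnosis that purely local neighbourhood estimates necessarily fall short by $\Theta(N)$ is incorrect once the auxiliary vertex $w$ is introduced; no global argument is required, and as it stands your part (ii) is incomplete.
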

\begin{proof}
\noindent  \textbf{Part (i):} suppose that there exist $x \in X$ and $y \in Y$ with $xy \in G_{3}$. Note that the sets $N_{G_1 \cap G_{2}}(x)$, $N_{G_{1}}(y)$ and $N_{G_2}(y)$ are disjoint sets. Since they have sizes at least $\delta^+(G_1\cap G_2)> \frac{N}{6}$, $\frac{N}{2}$ and $\frac{N}{3}$ respectively, this gives a contradiction.

\noindent  \textbf{Part (ii):} suppose that $y',y' \in Y$ and $v\in V$ are the vertices of a triangle $T$ in $G_{2}$. Since $\delta(G_1) + \delta(G_3) > \frac{2N}{3}$, by averaging there exists $w$ sending at least $3$ edges in colours $1$ or $3$ to $T$. Further, all three of these edges must be in same colour $c\in\{1,3\}$ (otherwise we have a rainbow triangle). It is easy to check that $N_{G_1 \cap G_3}(y)$, $N_{G_1 \cap G_3}(y')$  $N_{G_{4-c}}(v)$ and $N_{G_2}(w)$ are pairwise disjoint subsets of combined size at least $2\delta^+(G_1\cap G_3)+\delta(G_3)+\delta(G_2)>N$, a contradiction. Thus any triangle in $G_2$ must have at most one vertex in $Y$.

\noindent  \textbf{Part (iii):} suppose that $x\in X$ and $v,v'\in V$ are the vertices of a triangle in $G_3$. Then $N_{G_1\cap G_2}(x)$, $N_{G_2}(v)$ and $N_{G_1}(v')$ are pairwise disjoint sets of combined size at least $\delta^+(G_1\cap G_2)+\delta(G_2)+\delta(G_1)>N$, a contradiction. Thus any triangle in $G_3$ must be contained wholly inside $Y$.
\end{proof}
\noindent
Our next aim is to prove that $G_{3}[X]$ must be bipartite.
\begin{lemma} \label{lemma: G_3 bipartite r=2}
$G_{3}[X]$ is bipartite. 
\end{lemma}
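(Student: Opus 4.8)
The plan is to argue by contradiction, following closely the proof of Lemma~\ref{lem: mindeg, G2 bipartite} but working inside $X$ and with the roles of colours $2$ and $3$ interchanged. Since $G_3[X]$ is triangle-free by Lemma~\ref{lemma: no G3 edges between X and Y no triangles in G2Y or G3X}(iii), if it is not bipartite it contains a shortest odd cycle $C = v_1v_2\cdots v_{2\ell+1}$ with $\ell\geq 2$. All the $v_i$ lie in $X$, and by Lemma~\ref{lemma: no G3 edges between X and Y no triangles in G2Y or G3X}(i) we have $N_{G_3}(v_i)\subseteq X$ for every $i$; in particular only vertices of $X$ send a $G_3$-edge to $C$. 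The minimality of $C$ together with triangle-freeness guarantees, exactly as in Lemma~\ref{lem: mindeg, G2 bipartite}, that every $v\in V$ sends at most two $G_3$-edges to $C$, and two only when their endpoints lie at distance $2$ on $C$. I will also use the trivial observation that $G_1\cap G_2\subseteq X^{(2)}$ (if $xx'\in G_1\cap G_2$ then $x,x'\in X$), so that $d_{G_3}(v_i)=d_{G_3[X]}(v_i)$ and $d_{G_1\cap G_2}(v_i)>\tfrac{N}{6}$ for each $i$.

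The first step is to establish $\delta(G_3)<\tfrac{N}{3}$. Fix $i$. The sets $N_{G_3}(v_{i-2})\cap N_{G_3}(v_i)$ and $N_{G_3}(v_{i+2})\cap N_{G_3}(v_i)$ are disjoint subsets of $N_{G_3}(v_i)$ (a common vertex would send three $G_3$-edges to $C$), so one of them, say the first, has size at most $\tfrac12\vert N_{G_3}(v_i)\vert$, giving $\vert N_{G_3}(v_{i-2})\cup N_{G_3}(v_i)\vert\geq\tfrac32\delta(G_3)$. On the other hand, since $v_{i-2}v_{i-1},v_{i-1}v_i\in G_3$ and $\mathbf{G}$ has no rainbow triangle, both $N_{G_2}(v_{i-2})$ and $N_{G_2}(v_i)$ are disjoint from $N_{G_1}(v_{i-1})$, so $\vert N_{G_2}(v_{i-2})\cup N_{G_2}(v_i)\vert\leq N-\delta(G_1)<\tfrac{N}{2}$, whence
\begin{align*}
\vert N_{G_2}(v_{i-2})\cap N_{G_2}(v_i)\vert\;\geq\;2\delta(G_2)+\delta(G_1)-N\;>\;\tfrac{2N}{3}+\tfrac{N}{2}-N\;=\;\tfrac{N}{6}\;>\;0.
\end{align*}
Picking $w$ in this non-empty intersection, the edges $wv_{i-2},wv_i\in G_2$ force $N_{G_1}(w)$ to be disjoint from $N_{G_3}(v_{i-2})\cup N_{G_3}(v_i)$, so $\tfrac32\delta(G_3)\leq N-\delta(G_1)<\tfrac{N}{2}$, i.e.\ $\delta(G_3)<\tfrac{N}{3}$.

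To complete the proof I would derive a contradiction with the standing hypothesis $\delta(G_2)+\delta(G_3)>\tfrac{2N}{3}$ through a double-counting argument over the edges between $C$ and $V$. The key local constraint is that, since every edge $v_jv_{j+1}$ of $C$ lies in $G_3$, no vertex $w$ can have $wv_j$ carrying colour $1$ and $wv_{j+1}$ carrying colour $2$ (or the reverse): in particular the positions $j$ with $wv_j\in G_1\cap G_2$ form an independent set of $C$, and the positions where $wv_j\in G_1$ are separated along $C$ from those where $wv_j\in G_2$. Combining (a)~this constraint, (b)~the fact that every $w\in Y$ sends only colour-$1$ and colour-$2$ edges to $C$ (as $G_3[X,Y]=\emptyset$ and $G_1\cap G_2\subseteq X^{(2)}$), (c)~the bound that every $w\in V$ sends at most two $G_3$-edges to $C$, and (d)~the degree sums $\sum_i d_{G_1}(v_i)>\tfrac{(2\ell+1)N}{2}$, $\sum_i d_{G_2}(v_i)>\tfrac{(2\ell+1)N}{3}$, $\sum_i d_{G_1\cap G_2}(v_i)>\tfrac{(2\ell+1)N}{6}$, together with the bound $\delta(G_3)<\tfrac{N}{3}$ from the previous step, should force $\delta(G_2)+\delta(G_3)\leq\tfrac{2N}{3}$.

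The step I expect to be the main obstacle is precisely this final accounting, and in particular the case $\ell=2$, where $C$ is a $5$-cycle and the independent-set bound is far too weak on its own. In that case one should exploit that in $C_5$ every two non-adjacent vertices lie at distance exactly $2$, so that $N_{G_3}(v_i)$ and $N_{G_3}(v_{i+1})$ are in fact disjoint for every consecutive pair on $C$; this produces five subsets of $X$, consecutive ones disjoint and each of size greater than $\tfrac{N}{6}$, and a short ad hoc argument weighing these against $\vert X\vert\leq N$, the $G_1\cap G_2$-neighbourhoods of the $v_i$ (which also lie in $X$), and the lower bound on $\delta(G_2)$ should yield the contradiction for $C_5$, completing the proof.
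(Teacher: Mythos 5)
Your opening reduction is sound: taking a shortest odd cycle $C$ in $G_3[X]$, noting $N_{G_3}(v_i)\subseteq X$, that every vertex sends at most two $G_3$-edges into $C$ (and only to vertices at distance $2$), and running the $\tfrac32\delta(G_3)$ union bound with a common $G_2$-neighbour of $v_{i-2},v_i$ does correctly give $\delta(G_3)<\tfrac{N}{3}$. But this bound contradicts nothing — in the $r=2$ regime one only knows $\delta(G_3)>\tfrac{N}{6}$ — so at this point the proof has not really started. Everything that actually yields the contradiction is deferred to a final ``double-counting \dots should force $\delta(G_2)+\delta(G_3)\leq\tfrac{2N}{3}$'', which you do not carry out, and which you yourself flag as the main obstacle. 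This is a genuine gap, not a routine verification: the target is exactly the borderline value, i.e.\ the direct transposition of the $r\geq3$ counting from Lemma~\ref{lem: mindeg, G2 bipartite} has zero slack at $r=2$. Concretely, the transposed version of Claim~\ref{claim: not too many edges into cycle, r geq 3} (``every vertex sends at most two edges of $G_2$ into $C$''), which is what your items (c)--(d) would need, requires $\delta(G_1)+2\delta(G_3)>N$, i.e.\ $\delta(G_3)>\tfrac{N}{4}$ — precisely the inequality~\eqref{eq: key ineq r at least 3} that fails for $r=2$. The paper's proof of this lemma is therefore forced into a case split on $\delta(G_3)\gtrless\tfrac{N}{4}$: in each case it first pins down $\vert C\vert=5$ (via Claim~\ref{claim: not too many neighbours r=2} and $\delta(G_2)>\tfrac{N}{3}$ in one case, via the at-most-four-edges bound of Claim~\ref{claim: not too many again r=2} together with Claim~\ref{claim: vertices send few edges to C r=2} in the other), and then runs two different ad hoc $C_5$ arguments: the ``good pair'' argument when $\delta(G_3)\geq\tfrac{N}{4}$, and the count of the set $B$ of vertices sending a $G_1\cap G_2$-edge into $C$ (giving $\vert B\vert>\tfrac{5N}{12}$ versus $\vert B\vert<\tfrac{5N}{12}$) when $\delta(G_3)\leq\tfrac{N}{4}$, the latter using $\delta^+(G_1\cap G_2)>\tfrac{N}{6}$ and $\delta(G_2)>\tfrac{5N}{12}$. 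None of this machinery appears in your sketch.

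Moreover, the specific handle you propose for the $C_5$ case would not close it. Five sets $N_{G_3}(v_1),\dots,N_{G_3}(v_5)\subseteq X$, each of size a little over $\tfrac{N}{6}$ and with only \emph{consecutive} pairs disjoint, fit comfortably inside an $N$-vertex set (disjointness along a $5$-cycle permits each set to have size up to $\tfrac{2N}{5}$ before any counting obstruction appears), and throwing in the $G_1\cap G_2$-neighbourhoods of the $v_i$, which are also only of size $>\tfrac{N}{6}$ and are not known to avoid the $G_3$-neighbourhoods, does not obviously overfill $X$ either. So the decisive step — converting the local constraints (a)--(d) into a numerical contradiction when $\vert C\vert=5$ and $\delta(G_3)$ may be as small as $\tfrac{N}{6}$ — is missing, and this is exactly where the paper has to work hardest.
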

\begin{proof}
Suppose that $G_{3}[X]$ is not bipartite, and let $C$ be a shortest odd cycle in $G_3[X]$. by Lemma~\ref{lemma: no G3 edges between X and Y no triangles in G2Y or G3X}(iii), this cycle $C$ must have length at least $5$. Let $x_{1}, \dots, x_{m}$ be the vertices of $C$ in cyclic order. Since $C$ has minimal length, $N_{G_3}(x_{i})$ and $N_{G_3}(x_{j})$ may intersect only when $i$ and $j$ differ by $2$ modulo $m$. We split the proof into two cases, depending on the size of $\delta(G_3)$.

\noindent \textbf{Case 1: $\delta(G_3)>\frac{N}{4}$.}
\begin{claim} \label{claim: not too many neighbours r=2}
Suppose that $\delta(G_3) \geq \frac{N}{4}$. Then every vertex $v\in V$ sends at most two edges of $G_2$ to $C$, and furthermore does not send edges in colour $2$ to adjacent vertices on the cycle.
\end{claim}
\begin{proof}
We use a similar argument to the proof of Claim~\ref{claim: not too many edges into cycle, r geq 3}.
Let $v \in V$, and suppose that $i, j$ and $k$ are distinct indices chosen so that $\left\{x_i, x_j, x_k\right\} \subseteq N_{G_2}(v)$. Since $C$ is a cycle of length at least $5$, two of these indices do not differ by exactly $2$ modulo $\vert C\vert $, say $i$ and $j$. Thus the sets $N_{G_3}(x_{i})$, $N_{G_3}(x_{j})$ and $N_{G_1}(v)$ are disjoint, contradicting the fact that $\delta(G_1) + 2\delta(G_3) > N$. The `furthermore' part of the claim follows immediately since cyclically adjacent indices do not differ by $2$ modulo $\vert C\vert$. 
\end{proof}
\noindent  Since every vertex $v\in V$ sends  at most two edges of $G_2$ into $C$, it follows by averaging over $x_i\in C$ that $\delta(G_2) \leq \frac{2N}{\vert C \vert}$. As $\delta(G_2) > \frac{N}{3}$, we must have $\vert C \vert = 5$.

We say that a pair $\left(i,i+2\right)$ is \emph{good} if $\vert N_{G_2}(x_i) \cup N_{G_2}(x_{i+2}) \vert < \frac{N}{2}$, with indices considered modulo $5$. Suppose that there exists $i$ which is contained in two good pairs. Since $\delta(G_2) > \frac{N}{3}$, we must have
\begin{align*}
\vert N_{G_2}(x_{i-2}) \cap N_{G_2}(x_{i}) \vert = \vert N_{G_2}(x_{i-2})\vert + \vert N_{G_2}(x_i)\vert - \vert N_{G_2}(x_i) \cup N_{G_2}(x_{i+2}) \vert &> \vert N_{G_2}(x_i)\vert -\frac{N}{6}\\
&>\frac{1}{2} \vert N_{G_2}(x_{i}) \vert 
\end{align*}
and similarly
$\vert N_{G_2}(x_{i+2}) \cap N_{G_2}(x_{i}) \vert > \frac{1}{2} \vert N_{G_2}(x_{i}) \vert$.
This implies that $N_{G_2}(x_{i-2}) \cap N_{G_2}(x_{i}) \cap N_{G_2}(x_{i+2}) \neq \emptyset$, contradicting Claim ~\ref{claim: not too many neighbours r=2}. Hence no vertex is contained in two good pairs, and (as the cycle $C$ has odd length) there exists a vertex, say $x_{1}$, that is not contained in any good pair.

Observe that for any vertices $v_1,v_2\in V$, if $w\in N_{G_3}(v_1)\cap N_{G_3}(v_2)$ then $N_{G_1}(w)$ is disjoint from $N_{G_2}(v_1)\cup N_{G_2}(v_2)$ (otherwise we have a rainbow triangle). Since $\delta(G_1)>\frac{N}{2}$, this implies in particular that if $\vert N_{G_2}(v_1)\cup N_{G_2}(v_2)\vert \geq\frac{N}{2}$, then $\vert N_{G_3}(v_1)\cap N_{G_3}(v_2)\vert =0$.  So the definition of a good pair above, together with the minimality of $C$, implies that $N_{G_3}(x_1)$ is disjoint from $\bigcup_{i=2}^5 N_{G_3}(x_i)$.


Now suppose that there exists $i \neq 1$ so that $N_{G_2}(x_1) \cap N_{G_2}(x_i)$ is non-empty, and let $w$ be an element contained in this set. Then $N_{G_1}(w)$, $N_{G_3}(x_1)$ and $N_{G_3}(x_i)$ are disjoint sets of sizes strictly greater than $\frac{N}{2}$, $\frac{N}{4}$ and $\frac{N}{4}$, which is a contradiction. 

Hence $N_{G_2}(x_1)$ is disjoint from $\bigcup_{i=2}^5N_{G_2}(x_i)$. By the second part of Claim~\ref{claim: not too many neighbours r=2}, this implies the sets $N_{G_2}(x_1)$, $N_{G_2}(x_2)$ and $N_{G_2}(x_3)$ are pairwise disjoint, each having size at least $\delta(G_2)>\frac{N}{3}$, a contradiction. Thus in the case $\delta(G_3) \geq \frac{N}{4}$, $G_{3}[X]$ must be bipartite.

\noindent \textbf{Case 2: $\delta(G_3) \leq \frac{N}{4}$.} In this case, our condition $\delta(G_2)+\delta(G_3)>\frac{2N}{3}$ implies $\delta(G_2) > \frac{5N}{12}$. We start by proving an analogue of Claim ~\ref{claim: not too many neighbours r=2}.
\begin{claim} \label{claim: not too many again r=2}
If $\vert C\vert \geq 7$, then every vertex $v\in V$ sends at most $4$ edges of $G_2$ into $C$.
\end{claim}
\begin{proof}
Set $\vert C\vert =2\ell+1$. Suppose for a contradiction that $v$ sends at least five edges of $G_2$ into $C$, and let $U \subseteq C$ be a set of five neighbours of $v$ in $G_2$ that lie on $C$.  Since $\vert C\vert \geq 7$, it is easily seen that $U$ must contain a set of three distinct vertices $\{x_i, x_j, x_k\}$ so that the distance between any two vertices in this set is not $2$. 
Now $N_{G_1}(v)$, $N_{G_3}(x_i)$, $N_{G_3}(x_j)$ and $N_{G_3}(x_k)$ are disjoint sets, contradicting the fact that $3\delta(G_3) + \delta(G_1) > N$.
\end{proof}
\noindent
Set $A = \left\{v \in V: N_{G_2}(v) \cap C \neq \emptyset \right\}$. If $\vert C\vert \geq 7$, then by Claim~\ref{claim: not too many again r=2} and averaging, we must have 
\begin{align}\label{eq: upper bound on A-set}
\vert A\vert & \geq \frac{\vert C\vert }{4} \delta(G_2).
\end{align}

Our next aim is to prove a statement analogous to Claim~\ref{claim: vertices send few edges to C} (from the proof of the $r\geq3$ case), with colours $2$ and $3$ interchanged due to our different set-up. 
\begin{claim}\label{claim: vertices send few edges to C r=2}
If $\vert C\vert \geq 7$, then for every $v\in V$, $\vert G_1[\{v\}, C]\vert + \vert G_2[\{v\}, C]\vert\leq \vert C\vert $. Moreover, this inequality is strict if $v$ sends an edge in colour $2$ to $C$.
\end{claim}
\begin{proof}
The proof follows is identical to the proof of Claim~\ref{claim: vertices send few edges to C}, apart from the very last assertion: at the end of the proof we have to verify that no vertex $v$ can send $\vert C \vert$ edges of colour $2$ to $C$. In the proof of Claim~\ref{claim: vertices send few edges to C}, we used Claim~\ref{claim: G3 nhoods indep in G2} to do this; to prove Claim~\ref{claim: vertices send few edges to C r=2}, we appeal instead to Claim~\ref{claim: not too many again r=2}. 
\end{proof}

\noindent 
Suppose that $\vert C\vert \geq 7$. Counting the edges in $G_{1}$ and $G_{2}$ with at least one endpoint in $C$, we get from combining  Claim~\ref{claim: vertices send few edges to C r=2} with the lower bound~\eqref{eq: upper bound on A-set} on $\vert A\vert$ that
\begin{align*}
\vert C \vert\left(\delta(G_1) + \delta(G_2)\right)\leq \left(N - \vert A \vert\right) \vert C \vert + \vert A \vert \left(\vert C \vert -1\right) = \vert C\vert N -\vert A\vert \leq \vert C \vert \left(N-\frac{\delta(G_2)}{4}\right) .
\end{align*}
This is a contradiction since $\delta(G_1) + \frac{5}{4} \delta(G_2) >\frac{N}{2}+\frac{25N}{48}> N$. Thus we must have $\vert C \vert = 5$.

Consider the set $B = \left\{v \in V: N_{G_1 \cap G_2}(v) \cap C \neq \emptyset\right\}$. As in the 
proof of Claim~\ref{claim: vertices send few edges to C},  we can deduce that a vertex $b \in B$ can send a total of at most $4$ edges in colours $1$ or $2$ into $C$. Recall $C\subseteq X$, so $\delta^+(G_1\cap G_2)>\frac{N}{6}$, and by averaging over vertices of $C$ we have $\vert B \vert \geq \frac{5\delta^+(G_1\cap G_2)}{2} > \frac{5N}{12}$. Again, double-counting the edges in $G_{1}$ and $G_{2}$ with at least one endpoint in $C$, we obtain that
\begin{align*}
 5 \left(\delta(G_1)+ \delta(G_2)\right) \leq 5 \left(N - \vert B \vert\right) + 4 \vert B \vert = 5N-\vert B\vert.
\end{align*}
Since $\delta(G_2) > \frac{5N}{12}$, this implies $\vert B \vert < \frac{5N}{12}$, contradicting our earlier lower bound on $\vert B\vert$. This completes the proof that $G_{3}[X]$ must be bipartite.
\end{proof}

\begin{lemma}\label{lemma: if X empty, G2 bip}
If $X=\emptyset$, then $G_2=G_2[Y]$ is bipartite.
\end{lemma}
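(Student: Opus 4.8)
The plan is to unpack the hypothesis $X=\emptyset$ into structural facts about $G_2$, reduce to a shortest odd cycle of $G_2$ (which will turn out to have length exactly $5$), and then extract degree inequalities that should contradict $\delta(G_1)>\frac N2$, $\delta(G_2)\ge\delta(G_3)$ and $\delta(G_2)+\delta(G_3)>\frac{2N}3$. First note that $X=\emptyset$ means $G_1\cap G_2=\emptyset$, so by Lemma~\ref{lem: mindeg, no 23 edge r=2} the graph $G_2$ is edge-disjoint from both $G_1$ and $G_3$; moreover $\delta(G_1\cap G_3)>\frac N6$ (as noted before Lemma~\ref{lemma: no G3 edges between X and Y no triangles in G2Y or G3X}, using $Y=V$). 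Since $Y=V$, Lemma~\ref{lemma: no G3 edges between X and Y no triangles in G2Y or G3X}(ii) gives that $G_2=G_2[Y]$ is triangle-free, and since $\delta(G_1)+2\delta(G_2)>N$ (as $\delta(G_1)>\frac N2$, $\delta(G_2)>\frac N3$) every set $N_{G_3}(v)$ is independent in $G_2$ (arguing as in Claim~\ref{claim: G3 nhoods indep in G2}). Suppose for contradiction $G_2$ is not bipartite, and let $C=x_1\cdots x_m$ be a shortest odd cycle in $G_2$; triangle-freeness gives $m\ge 5$. By the standard shortest-odd-cycle argument (cf.\ the proof of Lemma~\ref{lem: mindeg, G2 bipartite}) every vertex sends at most two edges of $G_2$ to $C$, and to a pair of cycle vertices at cyclic distance $2$ if exactly two; hence $m\,\delta(G_2)\le\sum_i d_{G_2}(x_i)=\sum_v\vert N_{G_2}(v)\cap C\vert\le 2N$, and as $\delta(G_2)>\frac N3$ we conclude $m=5$.

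Next I would show that every vertex $v$ sends at most two edges of $G_3$ to $C$, and to no two cyclically adjacent vertices of $C$. Indeed, if $v$ sent $G_3$-edges to cyclically adjacent cycle vertices $x_i,x_{i+1}$, then $N_{G_2}(x_i)$ and $N_{G_2}(x_{i+1})$ are disjoint (as $x_ix_{i+1}\in G_2$ and $G_2$ is triangle-free) and each is disjoint from $N_{G_1}(v)$ (a common vertex would form a rainbow triangle with $vx_i\in G_3$, resp.\ $vx_{i+1}\in G_3$), contradicting $\delta(G_1)+2\delta(G_2)>N$; since any three vertices of a $5$-cycle include an adjacent pair, $v$ has at most two $G_3$-neighbours on $C$. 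In particular $N_{G_3}(x_i)\cap N_{G_3}(x_{i\pm1})=\emptyset$ for every $i$.

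From here I would extract two degree inequalities. First, Claim~\ref{claim: vertices send few edges to C} applies verbatim, since its proof used only the absence of rainbow triangles, that $C$ is a $G_2$-cycle, and the independence of $G_3$-neighbourhoods in $G_2$: $\vert G_1[\{v\},C]\vert+\vert G_3[\{v\},C]\vert\le 5$ for every $v$, with equality only if $v$ sends no $G_3$-edge to $C$. Setting $A=\{v:\ G_3[\{v\},C]\neq\emptyset\}$ and summing over $v$, $5(\delta(G_1)+\delta(G_3))\le 5(N-\vert A\vert)+4\vert A\vert$, so $\vert A\vert\le 5(N-\delta(G_1)-\delta(G_3))$; since each vertex sends at most two $G_3$-edges to $C$, averaging over $C$ gives $\vert A\vert\ge\frac{5\delta(G_3)}{2}$, whence $\delta(G_1)+\frac32\delta(G_3)\le N$. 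Second, for each $i$ the sets $N_{G_2}(x_{i-2})\cap N_{G_2}(x_i)$ and $N_{G_2}(x_{i+2})\cap N_{G_2}(x_i)$ are disjoint subsets of $N_{G_2}(x_i)$ (a common element would be a $G_2$-neighbour of $x_{i-2}$ and $x_{i+2}$, which are adjacent on $C$), so one of the unions $N_{G_2}(x_{i\pm2})\cup N_{G_2}(x_i)$ has size at least $\frac32\delta(G_2)$; as the two cycle vertices involved have $G_1$-degrees summing to more than $N$, they have a common $G_1$-neighbour $w$, and $N_{G_3}(w)$ is disjoint from that union (else a rainbow triangle through $w$), giving $\frac32\delta(G_2)+\delta(G_3)\le N$.

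The remaining step — turning these two near-tight inequalities into a contradiction with $\delta(G_2)+\delta(G_3)>\frac{2N}{3}$ — is where I expect the real work to lie. On their own the bounds only yield $\delta(G_3)<\frac N3$ and $\delta(G_2)<\frac{2N}{3}$, which is still consistent with the hypotheses, so the cycle analysis must be sharpened, and this is the main obstacle. One route is to first reduce the problem via the Andr\'asfai--Erd\H{o}s--S\'os theorem: if $\delta(G_2)>\frac{2N}{5}$ then $G_2$ is already bipartite, so one may assume $\delta(G_2)\le\frac{2N}{5}$, which (with $\delta(G_2)+\delta(G_3)>\frac{2N}3$) forces $\delta(G_3)>\frac{4N}{15}$ — large enough that the $G_3$-neighbourhoods of cycle vertices can be pushed through the argument of the previous paragraph more aggressively. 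A more hands-on route is to exploit that, since $\delta(G_2)>\frac N3$, strictly more than $\frac{2N}{3}$ vertices send exactly two edges of $G_2$ to $C$ (necessarily to a distance-$2$ pair of cycle vertices), to analyse the resulting near-blow-up-of-$C_5$ structure of $G_2$ around $C$, and to track the constraints this, together with $\delta(G_1\cap G_3)>\frac N6$, places on the colour-$1$ and colour-$3$ edges incident to $C$. In either case the goal is to force three colour-$2$ neighbourhoods, each of size exceeding $\frac N3$, to be pairwise disjoint — impossible in an $N$-set — or to improve the degree bounds to $\frac32(\delta(G_2)+\delta(G_3))\le N$, which contradicts the hypothesis outright.
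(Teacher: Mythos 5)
Everything you actually prove is correct and overlaps with the paper's set-up (triangle-freeness of $G_2$, reduction to a shortest odd cycle, the bound of two colour-$2$ edges per vertex into $C$ forcing $\vert C\vert=5$), but the proof is not complete: as you yourself concede, the two inequalities $\delta(G_1)+\tfrac32\delta(G_3)\leq N$ and $\tfrac32\delta(G_2)+\delta(G_3)\leq N$ are compatible with $\delta(G_1)>\tfrac N2$, $\delta(G_2)>\tfrac N3$, $\delta(G_3)>\tfrac N6$ and $\delta(G_2)+\delta(G_3)>\tfrac{2N}3$ (e.g.\ $\delta(G_2)=0.45N$, $\delta(G_3)=0.25N$), so no contradiction is reached and the final, essential step is missing. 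The structural source of the weakness is visible in your second inequality: you pair the distance-$2$ cycle vertices through a common $G_1$-neighbour $w$, so the set you get to exclude is only $N_{G_3}(w)$, of size barely more than $\tfrac N6$. The paper does the opposite: it first invokes the Andr\'asfai--Erd\H{o}s--S\'os theorem (the route you mention but do not carry out) to conclude that a non-bipartite $G_2$ has $\delta(G_2)\leq\tfrac{2N}5$ and hence $\delta(G_3)>\tfrac{4N}{15}$; this boost makes $\delta(G_1)+2\delta(G_3)>\tfrac{31N}{30}>N$, which produces for \emph{every} $i$ a vertex $v_i$ with $v_iy_{i-1},v_iy_{i+1}\in G_3$. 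Now the excluded set is $N_{G_1}(v_i)$, of size more than $\tfrac N2$, disjoint from $N_{G_2}(y_{i-1})\cup N_{G_2}(y_{i+1})$; applied at $v_4$ and $v_2$ this gives $\vert N_{G_2}(y_3)\cap N_{G_2}(y_5)\vert>\tfrac12\vert N_{G_2}(y_3)\vert$ and $\vert N_{G_2}(y_1)\cap N_{G_2}(y_3)\vert>\tfrac12\vert N_{G_2}(y_3)\vert$, so the triple intersection $N_{G_2}(y_1)\cap N_{G_2}(y_3)\cap N_{G_2}(y_5)$ is non-empty, and a common $G_2$-neighbour of the adjacent vertices $y_1,y_5$ contradicts triangle-freeness of $G_2$.

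So the gap is concrete: without the Andr\'asfai--Erd\H{o}s--S\'os reduction you only know $\delta(G_3)>\tfrac N6$, which is too small to guarantee the common $G_3$-neighbours $v_i$ of distance-$2$ cycle vertices ($\delta(G_1)+2\delta(G_3)>\tfrac56 N$ does not exceed $N$), and your counting arguments cannot compensate. If you make the AES step the first move of the contradiction argument (rather than a possible ``route''), and then replace your common-$G_1$-neighbour pairing by the common-$G_3$-neighbour pairing just described, your outline closes and essentially becomes the paper's proof.
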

\begin{proof}
Assume $X=\emptyset$ (and thus $Y=V$). Suppose for a contradiction that $G_2$ is not bipartite, and let $C$ be a shortest odd cycle in $G_2$. By Lemma~\ref{lemma: no G3 edges between X and Y no triangles in G2Y or G3X}, $G_{2}$ is triangle-free, and so $\vert C\vert \geq 5$. Further, since $G_2$ is not bipartite, the Andrasfai--Erd{\H o}s --S\'os theorem~\cite{AndrasfaiErdosSos74} implies that $\delta(G_2) \leq \frac{2N}{5}$ and hence that $\delta(G_3)> \frac{4N}{15}$. As $C$ is a shortest cycle, every vertex $v\in V$ can send at most two edges of $G_2$ into $C$. Thus in particular $\vert C\vert \delta(G_2)\leq 2N$, and since $\delta(G_2)>\frac{N}{3}>\frac{2N}{7}$ we must have $\vert C\vert =5$.

Let $y_1, \ldots y_5$ denote the vertices of $C$. Considering indices modulo $5$, we have that for every $i\in [5]$, $N_{G_1}(y_i)$ is disjoint from $N_{G_3}(y_{i-1})\cup N_{G_3}(y_{i+1})$ (otherwise we have a rainbow triangle). Since $\delta(G_1)+2\delta(G_3)>\frac{31N}{30}$, this implies that there exist vertices $v_i$, $i\in [5]$, with $v_iy_{i-1}, v_iy_{i+1}\in G_3$ for all $i$.

Now $N_{G_1}(v_4)$ is disjoint from $N_{G_2}(y_3)\cup N_{G_2}(y_{5})$, whence we have $\vert N_{G_2}(y_3)\cup N_{G_2}(y_{5})\vert<\frac{N}{2}$ and 
\begin{align*}\vert N_{G_2} (y_{3})\cap N_{G_2} (y_{5})\vert \geq \vert N_{G_2}(y_3)\vert +\delta(G_2) -\vert N_{G_2}(y_3)\cup N_{G_2}(y_{5})\vert >\vert N_{G_2}(y_3)\vert  -\frac{N}{6}> \frac{1}{2}\vert N_{G_2}(y_3)\vert .\end{align*}
Similarly, we have $\vert N_{G_2}(y_1)\cap N_{G_2}(y_3)\vert >\frac{1}{2}\vert N_{G_2}(y_3)\vert$. However this implies that the triple intersection $N_{G_2}(y_1)\cap N_{G_2}(y_3)\cap N_{G_2}(y_5)$ is non-empty. In particular $y_1$ and $y_5$ have a common neighbour in $G_2$, contradicting the fact that $G_2$ is triangle-free. Thus $G_2$ must be bipartite as claimed.
\end{proof}

\noindent We now show  $X =V$, i.e.\ that every vertex is incident with 
bi-chromatic edges in colours $12$. 
\begin{lemma}\label{lemma: all vertices incident with 12 edges, r=2}
	$X = V$. 
\end{lemma}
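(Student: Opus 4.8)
The goal is to show $Y:=V\setminus X=\emptyset$. The plan is to prove that $X\sqcup Y$ is a $1$-cut and then invoke Lemma~\ref{lem:1-cut}. Since $G_3[X,Y]=\emptyset$ by Lemma~\ref{lemma: no G3 edges between X and Y no triangles in G2Y or G3X}(i), it remains to establish that $G_2[X,Y]=\emptyset$; granting this, Lemma~\ref{lem:1-cut} forces $X=\emptyset$ or $Y=\emptyset$, and it will only remain to rule out the alternative $X=\emptyset$. Throughout, I would use the facts already recorded for this partition: for every $x\in X$ the set $N_{G_1\cap G_2}(x)$ is a non-empty subset of $X$ with $|N_{G_1\cap G_2}(x)|>\frac N6$ and $N_{G_3}(x)\subseteq X$, while for every $y\in Y$ the set $N_{G_1\cap G_3}(y)$ is a non-empty subset of $Y$ with $|N_{G_1\cap G_3}(y)|>\frac N6$ and $N_{G_3}(y)\subseteq Y$; also, since $G_3[X]$ is bipartite (Lemma~\ref{lemma: G_3 bipartite r=2}) and $\delta(G_3)>\frac N6$, whenever $X,Y$ are both non-empty we have $|X|>\frac N3$ and $|Y|>\frac N6$.

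For the main step, suppose for contradiction that there is an edge $xy\in G_2$ with $x\in X$, $y\in Y$ (so in particular $x y\notin G_1$, else $y\in X$). From the absence of rainbow triangles through $xy$ one gets that $N_{G_1}(x)$ is disjoint from $N_{G_3}(y)$ and that $N_{G_3}(x)$ is disjoint from $N_{G_1}(y)$. I would then fix a $G_1\cap G_2$-neighbour $x'\in X$ of $x$ and a $G_1\cap G_3$-neighbour $y'\in Y$ of $y$ and chase the no-rainbow-triangle condition through the resulting configuration on $\{x,x',y,y'\}$, using in addition the bipartition of $G_3[X]$ together with the bounds $\delta(G_1)>\frac N2$ and $\delta(G_2),\delta(G_3)>\frac N6$, to exhibit three (or more) pairwise disjoint subsets of $V$ whose sizes sum to strictly more than $N$ --- the contradiction. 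This is the technical crux of the lemma and the step I expect to be the main obstacle: a single edge $xy\in G_2$ only yields disjoint neighbourhoods of total size about $\frac{2N}3$ (because, unlike in the $r\ge 3$ case, the inequality $\delta(G_1)+2\min(\delta(G_2),\delta(G_3))>N$ fails), so closing the remaining gap of roughly $\frac N3$ requires carefully combining the $12$-bi-chromatic edges confined to $X$, the $13$-bi-chromatic edges confined to $Y$, and the structure of $G_3[X]$, very much in the spirit of the multi-claim arguments in Lemmas~\ref{lem: mindeg, no 23 edge r=2}--\ref{lemma: G_3 bipartite r=2}.

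Finally, to rule out $X=\emptyset$: in that case $Y=V$, so $G_2$ is bipartite by Lemma~\ref{lemma: if X empty, G2 bip}, say with parts $A\sqcup B$ and $|A|\ge|B|$ (so $\frac N3<|B|\le\frac N2\le|A|<\frac{2N}3$), and $G_1\cap G_2=\emptyset$ while every vertex is incident with a $13$-bi-chromatic edge. Since $N_{G_1}(a)\cap N_{G_2}(a)=\emptyset$ and $N_{G_2}(a)\subseteq B$ for each $a\in A$, a short computation from $\delta(G_1)>\frac N2$ and $\delta(G_2)>\frac N3$ shows that $G_1[A]$ has minimum degree exceeding $\frac{|A|}2$, hence contains a triangle $T$; combining the $G_2$-neighbourhoods of the vertices of $T$ (all contained in $B$) with a $13$-bi-chromatic edge incident to a vertex of $T$, and using $G_1\cap G_2=\emptyset=G_2\cap G_3$, one again produces either a rainbow triangle or pairwise disjoint neighbourhoods of total size exceeding $N$, the desired contradiction. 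Thus $X=\emptyset$ is impossible, and since we have shown $G_2[X,Y]=\emptyset$, Lemma~\ref{lem:1-cut} yields $Y=\emptyset$, i.e.\ $X=V$.
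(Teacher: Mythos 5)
There is a genuine gap: the step you yourself identify as ``the technical crux'' --- showing $G_2[X,Y]=\emptyset$ --- is never carried out. You correctly observe that a single edge $xy\in G_2$ with $x\in X$, $y\in Y$ only yields pairwise disjoint neighbourhoods of total size about $\tfrac{2N}{3}$ (since $\delta(G_1)+2\delta(G_3)>N$ is unavailable for $r=2$), and you then state that the remaining gap of roughly $\tfrac{N}{3}$ ``requires carefully combining'' the $12$-edges in $X$, the $13$-edges in $Y$ and the bipartition of $G_3[X]$, ``in the spirit of'' the earlier multi-claim lemmas. That is a plan, not a proof: no configuration of pairwise disjoint sets summing to more than $N$ is actually exhibited, and the natural candidates (e.g.\ $N_{G_1}(y)$, $N_{G_2}(y)$, $N_{G_3}(x)$, or replacing $N_{G_2}(y)$ by $N_{G_2}(y')$ for a $13$-neighbour $y'$ of $y$) fail because the needed disjointness between the colour-$2$ and colour-$3$ sets is exactly what one cannot certify without already knowing something like $G_2[X,Y]=\emptyset$. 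It is worth noting that the paper never proves this claim at all; instead, after showing $X\neq\emptyset$, it takes a colour-$1$ edge $ay$ between $X=A\sqcup B$ and $Y$ (such an edge exists whenever both parts are non-empty, since $\delta(G_1)>\tfrac N2$), proves that no vertex sends colour-$1$ or colour-$2$ edges into all three of $A$, $B$, $Y$, and then concludes from $y$ having a $13$-neighbour in $Y$ that $N_{G_1}(y)\subseteq V\setminus B$, which makes $N_{G_1}(y)$, $N_{G_2}(y)$ and $N_{G_3}(a)\subseteq B$ pairwise disjoint with sizes exceeding $\tfrac N2+\tfrac N3+\tfrac N6=N$. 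So the $1$-cut route you propose is not just incomplete; it aims at a statement the actual argument deliberately sidesteps.

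Your second step, ruling out $X=\emptyset$, has the same problem in milder form. The computation giving $\delta(G_1[A])>\tfrac{|A|}{2}$ (from $G_1\cap G_2=\emptyset$, $N_{G_2}(a)\subseteq B$ and $\delta(G_1)>\tfrac N2$, $\delta(G_2)>\tfrac N3$) is fine and yields a triangle $T$ in $G_1[A]$, but the punchline --- that combining the $G_2$-neighbourhoods of $T$ with a $13$-bi-chromatic edge at a vertex of $T$ ``produces either a rainbow triangle or pairwise disjoint neighbourhoods of total size exceeding $N$'' --- is again asserted rather than shown, and the obvious attempts (e.g.\ $N_{G_1}(u)\cup N_{G_1}(v)$ disjoint from $N_{G_3}(z)$ for a common $G_2$-neighbour $z$ of $u,v$, or $N_{G_1}(u)$ disjoint from $N_{G_2}(u')$ for a $13$-neighbour $u'$ of $u$) stall at totals between $\tfrac{2N}{3}$ and $\tfrac{5N}{6}$. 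The paper's proof of $X\neq\emptyset$ is different and complete: it uses Lemma~\ref{lemma: if X empty, G2 bip} to get a bipartition $V_1\sqcup V_2$ of $G_2$, shows each vertex sends $G_3$-edges into only one class, then shows no vertex of $V_1$ sends a $G_3$-edge to $V_2$, and finally derives a contradiction from $|V_2|<\delta(G_2)+\delta(G_3)$. To salvage your write-up you would need to supply genuine arguments for both missing contradictions, and for the main one I see no evidence that the $1$-cut strategy closes without essentially reproducing the paper's alternative argument.
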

\begin{proof}
	\noindent We first show $X$ is non-empty.
	\begin{claim}\label{claim: r=2, X nonempty}
		$X\neq \emptyset$.
	\end{claim}	
	\begin{proof}	
		Suppose $X=\emptyset$ (and thus $Y=V$). By Lemma~\ref{lemma: if X empty, G2 bip}, $G_2=G_2[Y]$ is bipartite. Let $V = V_{1} \sqcup V_{2}$ be a bipartition of $V$ such that $G_2 \subseteq \left(V_{1},V_{2}\right)^{(2)}$.

		We claim that every vertex sends edges of $G_3$ to at most one of $V_1$, $V_2$. Indeed, suppose this is not the case, and there exist $u,v$ and $w$ with $u,v \in V_{i}$ and $w \in V_{3-i}$ with $uv,uw \in G_{3}$. Then $N_{G_1}(u)$, $N_{G_2}(v)$ and $N_{G_2}(w)$ are disjoint sets (as the latter two are contained in distinct vertex classes of the bipartition), which contradicts the fact that $\delta(G_1) + 2\delta(G_2) > N$.

		Assume without loss of generality that $\vert V_{1} \vert \geq \frac{N}{2}$. Suppose that there exists $u \in V_{1}$ with $N_{G_3}(u) \subseteq V_{2}$. Since $\vert V_{1} \vert \geq \frac{N}{2}>N-\delta(G_1)$, there exists $v \in V_{1}$ with $uv \in G_{1}$. As we have no rainbow triangles,  $N_{G_{3}}(u)$ and $N_{G_{2}}(v)$ are disjoint subsets of $V_{2}$, which contradicts the fact that $\vert V_{2} \vert\leq \frac{N}{2}  < \delta(G_2) + \delta(G_3)$. Hence no $v \in V_{1}$ can send an edge in colour $3$ to $V_{2}$.

		Finally, given $u \in V_{2}$, there exists $v \in V_{1}$ with $uv \in G_{1}$ (since $\delta(G_1)>\frac{N}{2}\geq \vert V_2\vert$). As we have no rainbow triangles, $N_{G_{3}}(u)$ and $N_{G_{2}}(v)$ are disjoint subsets of $V_{2}$, contradicting the fact that $\vert V_{2} \vert< \delta(G_2) + \delta(G_3)$, which completes the proof. 
	\end{proof}

\noindent Let $X = A \sqcup B$ be a bipartition of $G_{3}[X]$ (which exists by Lemma~\ref{lemma: G_3 bipartite r=2}).
\begin{claim}\label{claim: send edge in colours 1 or 2 to at most two of A, B,Y}
$\forall v \in V$ and $i \in \left\{1,2\right\}$, $v$ sends an edge of $G_i$ to at most two of the sets $A$, $B$ and $Y$. 
\end{claim}
\begin{proof}
Suppose for a contradiction that there exist $v \in V$ and vertices $a \in A$, $b \in B$ and $y \in Y$ with $\left\{a,b,y\right\} \subseteq N_{G_i}(v)$. Then the sets $N_{G_3}(a)$, $N_{G_3}(b)$ and $N_{G_3}(y)$ are disjoint, as they are subsets of $B$, $A$ and $Y$ respectively. Furthermore, they are pairwise disjoint with $G_{3-i}(v)$, which contradicts the fact that $\min\left(\delta(G_2),\delta(G_1)\right) + 3\delta(G_3) > \frac{2N}{3} + 2\delta(G_3) > N$ (since $\delta(G_3)> \frac{N}{6}$ by~\eqref{eq: ub and lb bound on delta(G_i)}). The claim follows. 
\end{proof}
\noindent
Since $\delta(G_1) > \frac{N}{2}$, there must be an edge of $G_1$ between $Y$ and $X=A \cup B$. Assume without loss of generality that $a\in A$ and $y\in Y$ are such that $ay\in G_1$. As we observed in the paragraph above Lemma~\ref{lemma: no G3 edges between X and Y no triangles in G2Y or G3X}, $y$ is incident with at least one bi-chromatic edge in colours $13$, which by Lemma~\ref{lemma: no G3 edges between X and Y no triangles in G2Y or G3X}(i) must be to some vertex $y'\in Y$. By Claim~\ref{claim: send edge in colours 1 or 2 to at most two of A, B,Y}, this implies $y$ sends no edge in colour $1$ to the set $B$.  Further, since $y\in Y$, there is no bi-chromatic edge in colours $12$ incident with $y$, and its neighbourhoods in $G_1$ and $G_2$ are disjoint

Thus $N_{G_1}(y)\subseteq V\setminus B$, $N_{G_3}(a)\subseteq B$ and $N_{G_2}(y)$ are disjoint subsets (otherwise we have a rainbow triangle or a bi-chromatic edge in colours $12$ incident with $y$), which contradicts the fact that $\delta(G_1) + \delta(G_2) + \delta(G_3) > N$. The lemma follows. 
\end{proof}

\noindent By Lemma~\ref{lemma: all vertices incident with 12 edges, r=2}, $X=V$ (i.e.\ every vertex is incident with at least $N/6$ bi-chromatic edges in colours $12$), and by Lemma~\ref{lemma: G_3 bipartite r=2}, $G_3=G_3[X]$ is a bipartite graph with bipartition $A\sqcup B$. Assume without loss of generality that $\vert A\vert \geq \vert B\vert $. Our next aim is to verify that every vertex sends an edge in colour $1$ to both of $A$ and $B$.

\begin{claim}\label{1neigh in both}
For every $v \in V$ there exist $a \in A$ and $b \in B$ with $va, vb \in G_1$. 
\end{claim}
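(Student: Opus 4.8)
The plan is to argue by contradiction. Suppose some $v\in V$ has no $G_1$-edge to one of the two classes. If $N_{G_1}(v)\cap A=\emptyset$ then $N_{G_1}(v)\subseteq B$, so $|B|\geq d_{G_1}(v)>\tfrac N2\geq |A|$, which is absurd; hence we may assume $N_{G_1}(v)\subseteq A$, and consequently $|A|>\tfrac N2>|B|$. Throughout I would use the facts already established for the $r=2$ case: $\delta(G_1)>\tfrac N2$, $\tfrac N3<\delta(G_2)<\tfrac N2$, $\delta(G_3)>\tfrac N6$ and $\Delta(G_2),\Delta(G_3)<\tfrac N2$ (Proposition~\ref{prop: trivial min degree bound}); $G_2\cap G_3=\emptyset$ (Lemma~\ref{lem: mindeg, no 23 edge r=2}); $G_3\subseteq (A,B)^{(2)}$ (Lemma~\ref{lemma: G_3 bipartite r=2}), so that $N_{G_3}(u)\subseteq B$ for $u\in A$ and $N_{G_3}(u)\subseteq A$ for $u\in B$; and, since $X=V$ (Lemma~\ref{lemma: all vertices incident with 12 edges, r=2}), every vertex has more than $\tfrac N6$ bi-chromatic neighbours in colours $12$.

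The first step is a dichotomy. For every $a\in N_{G_1}(v)$ we have $a\in A$, hence $N_{G_3}(a)\subseteq B$; and for $b\in N_{G_3}(a)$ the triangle $vab$ has $va\in G_1$, $ab\in G_3$, and $vb\notin G_1$ (as $b\in B$), so the absence of a rainbow triangle forces $vb\notin G_2$. Thus $N_{G_2}(v)$ is disjoint from $U:=\bigcup_{a\in N_{G_1}(v)}N_{G_3}(a)\subseteq B$. Now either $U=B$, in which case $N_{G_2}(v)\subseteq A$; or there is $b\in B\setminus U$, and then $N_{G_3}(b)$ (a subset of $A$ of size $>\tfrac N6$) is disjoint from $N_{G_1}(v)$ (a subset of $A$ of size $>\tfrac N2$), forcing $|A|>\tfrac23 N$ and $|B|<\tfrac13 N$.

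In the branch $U=B$ (so $N_{G_1}(v),N_{G_2}(v)\subseteq A$) I would split on whether $v\in A$ or $v\in B$. If $v\in A$, then also $N_{G_3}(v)\subseteq B$; picking a colour-$12$ bi-chromatic neighbour $a\in A$ of $v$ and examining the triangles through $v,a$ and a vertex of $N_{G_3}(v)$, one gets that $N_{G_3}(v)$ is disjoint from $N_{G_1}(a)\cup N_{G_2}(a)$, which together with the identity $|N_{G_1}(v)\cup N_{G_2}(v)|=d_{G_1}(v)+d_{G_2}(v)-|N_{G_1\cap G_2}(v)|\leq |A|$ and the degree bounds above should yield three pairwise disjoint neighbourhoods of total size exceeding $N$, a contradiction. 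If instead $v\in B$, then every $b\in U=B$ is a non-neighbour of $v$ in all three colours, so all of $v$'s neighbourhoods lie in $A$ and an analogous disjointness computation applies.

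The harder branch, and the step I expect to be the main obstacle, is $|A|>\tfrac23 N$ (so $|B|<\tfrac13 N<2\delta(G_3)$). Here every vertex of $A$ has more than $\tfrac N6$ colour-$3$ neighbours, all inside $B$, so any two vertices $a,a'\in A$ have a common colour-$3$ neighbour $b\in B$; the Gallai property applied at $b$ then gives $|N_{G_2}(a)\cap N_{G_2}(a')|>2\delta(G_2)+\delta(G_1)-N>\tfrac N6$, and since each $a\in A$ satisfies $|N_{G_2}(a)\cap B|\leq |B|-\delta(G_3)$ these common neighbours mostly lie in $A$, so that $G_2[A]$ has minimum degree exceeding $|A|-\tfrac N3>\tfrac{|A|}2$. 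Such a graph contains a triangle, and I would then run the averaging argument of Lemma~\ref{lemma: no G3 edges between X and Y no triangles in G2Y or G3X}(ii) on this $G_2$-triangle (each vertex sends its colour-$1$/colour-$3$ edges into the triangle in a single colour, and colour $3$ is impossible for a vertex of $A$ since $G_3$ is bipartite) together with a double-count of $G_2[A]$ against $\Delta(G_2)<\tfrac N2$ to reach the final contradiction. The delicate point is making the constants close in the extreme regime $|A|\approx\tfrac56 N$, $\delta(G_3)\approx\tfrac N6$, $\delta(G_2)\approx\tfrac N2$; if a direct count is too lossy, the same scheme run on the bipartite graph $G_3[A,B]$ (whose degrees are pinned between $\tfrac N6$ and $\tfrac N2$) should serve instead.
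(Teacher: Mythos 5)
Your opening dichotomy and several intermediate deductions are fine (the disjointness of $N_{G_2}(v)$ from $U$, the bound $\vert N_{G_2}(a)\cap N_{G_2}(a')\vert>2\delta(G_2)+\delta(G_1)-N$, the estimate $\delta(G_2[A])>\vert A\vert-\tfrac{N}{3}$), but both branches are left with genuine gaps. In the branch $\vert A\vert>\tfrac{2N}{3}$ --- which you yourself flag as the main obstacle --- the plan does not close. The averaging argument of Lemma~\ref{lemma: no G3 edges between X and Y no triangles in G2Y or G3X}(ii) relied on two triangle vertices lying in $Y$ and hence having more than $\tfrac{N}{6}$ bi-chromatic $13$-neighbours; here $Y=\emptyset$, every vertex's large bi-chromatic neighbourhood is in colours $12$, the vertex $w$ produced by averaging may lie in $B$ (so colour $3$ is not excluded), and for colour $1$ no contradiction is identified. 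The double count of common $G_2$-neighbourhoods against $\Delta(G_2)<\tfrac{N}{2}$ also fails numerically: the branch only guarantees $\vert A\vert\geq\delta(G_1)+\delta(G_3)$, and in the regime $\delta(G_1)\approx\tfrac{N}{2}$, $\delta(G_2)\approx\tfrac{N}{2}$, $\delta(G_3)\approx\tfrac{N}{6}$, $\vert A\vert\approx\tfrac{2N}{3}$ one needs roughly $\vert A\vert\gtrsim 0.7N$ for the count to bite (the extreme case is not $\vert A\vert\approx\tfrac{5N}{6}$, where the count does work, but $\vert A\vert$ near $\tfrac{2N}{3}$). The fallback via $G_3[A,B]$ is weaker still, since common $G_3$-neighbourhoods of pairs in $A$ are only guaranteed non-empty, not large ($2\delta(G_3)+\delta(G_1)-N$ may be negative). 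In the branch $U=B$, the promised ``three pairwise disjoint neighbourhoods of total size exceeding $N$'' do not materialise from the listed sets: $N_{G_1}(a)$ and $N_{G_2}(a)$ overlap in more than $\tfrac{N}{6}$ vertices (every vertex has that many bi-chromatic $12$-neighbours), so you cannot add their sizes, and in the subcase $v\in B$ the disjointness only yields $\vert A\vert\geq\delta(G_2)+\delta(G_3)>\tfrac{2N}{3}$, which sends you back into the problematic branch rather than finishing.

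The paper's proof needs no dichotomy at all and you can use it to repair every branch at once: assuming $N_{G_1}(v)\subseteq A$, pick any $u\in N_{G_3}(v)$. Since $N_{G_2}(u)$ avoids $N_{G_1}(v)$, we get $\vert N_{G_2}(u)\cap B\vert\geq\delta(G_2)-\bigl(\vert A\vert-\delta(G_1)\bigr)>\vert B\vert-\delta(G_3)$, using $\delta(G_1)+\delta(G_2)+\delta(G_3)>N$. Hence every $w\in A$ has a $G_3$-neighbour (a subset of $B$ of size at least $\delta(G_3)$) inside $N_{G_2}(u)\cap B$, which forbids $uw\in G_1$; thus $N_{G_1}(u)\subseteq B$, contradicting $\delta(G_1)>\tfrac{N}{2}\geq\vert B\vert$. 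This is exactly the move you were one step away from in the $U=B$, $v\in A$ subcase (where you had all the needed disjointness at hand); as written, however, your argument is incomplete.
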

\begin{proof}
Fix $v\in V$. Since $\vert A \vert \geq \frac{N}{2}>N-\delta(G_1)$, it is clear that there exists $a\in A$ with $av\in G_1$. Suppose for a contradiction that $N_{G_1}(v) \subseteq A$.  Let $u$ be chosen so that $uv \in G_3$, and consider $N_{G_2}(u) \cap B$. Since $N_{G_2}(u) \cap A$ and $N_{G_1}(v)$ are disjoint subsets of $A$, it follows that 
\begin{align*}
\vert N_{G_2}(u) \cap B \vert \geq \delta(G_2)-\vert N_{G_2}(u)\vert\geq \delta(G_2) - \left(\vert A \vert-\delta(G_1)\right) > \vert B \vert - \delta(G_3),
\end{align*}
as we have $\delta(G_1) + \delta(G_2) + \delta(G_3) > N=\vert A\vert +\vert B\vert$. Thus for any $w \in A$ we must have $N_{G_3}(w) \cap \left(N_{G_2}(u) \cap B\right) \neq \emptyset$, which implies that $uw \not \in G_{1}$. Hence $N_{G_1}(u) \subseteq B$, which  contradicts the fact that $\delta(G_1)>\frac{N}{2}\geq \vert B\vert$. The claim follows. 
\end{proof}
\begin{claim}\label{cl: r= 2 case, delta G3 <N/4} $\delta(G_3)<\frac{N}{4}$.
\end{claim}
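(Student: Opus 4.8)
The plan is to argue by contradiction: assume that, in addition to everything already established, $\delta(G_3)\ge N/4$. Recall that by Lemma~\ref{lemma: all vertices incident with 12 edges, r=2} we have $X=V$ (so every vertex is incident with more than $N/6$ bi-chromatic $12$-edges), that by Lemma~\ref{lemma: G_3 bipartite r=2} the graph $G_3=G_3[X]$ is bipartite with the bipartition $A\sqcup B$ (say $|A|\ge|B|$, so $|A|\ge N/2\ge|B|\ge\delta(G_3)\ge N/4$), and that $G_2\cap G_3=\emptyset$ by Lemma~\ref{lem: mindeg, no 23 edge r=2}. The first task is to read off the degree consequences of Claim~\ref{1neigh in both}. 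Fix $a\in A$; it has a $G_1$-neighbour $\phi(a)\in A$ and a $G_1$-neighbour $b_a\in B$. Since $N_{G_3}(\phi(a))\subseteq B$, $N_{G_3}(b_a)\subseteq A$ and there is no rainbow triangle, the Gallai conditions for the edges $a\phi(a),ab_a\in G_1$ give $N_{G_2}(a)\cap N_{G_3}(\phi(a))=\emptyset$ and $N_{G_2}(a)\cap N_{G_3}(b_a)=\emptyset$, hence $|N_{G_2}(a)\cap B|\le|B|-\delta(G_3)$ and $|N_{G_2}(a)\cap A|\le|A|-\delta(G_3)$. Adding these yields $d_{G_2}(a)\le N-2\delta(G_3)$ for every $a\in A$ (and symmetrically for $B$), so $\delta(G_2)+2\delta(G_3)\le N$; moreover $|N_{G_2}(a)\cap A|\ge d_{G_2}(a)-|N_{G_2}(a)\cap B|\ge\delta(G_2)+\delta(G_3)-|B|>0$ since $|B|\le N/2<\delta(G_2)+\delta(G_3)$, so every vertex of $A$ has a $G_2$-neighbour inside $A$.

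Next I would propagate this to the graph $G_1$ restricted to $A$. Given $a\in A$, pick a $G_2$-neighbour $a''\in A$ of $a$. Each $b\in N_{G_3}(a'')$ lies in $B$, and the Gallai condition for the $G_3$-edge $a''b$ gives $N_{G_2}(a'')\cap N_{G_1}(b)=\emptyset$; since $a\in N_{G_2}(a'')$ this forces $ab\notin G_1$. Thus $N_{G_1}(a)\cap N_{G_3}(a'')=\emptyset$, so $|N_{G_1}(a)\cap B|\le|B|-\delta(G_3)$ and therefore $|N_{G_1}(a)\cap A|\ge\delta(G_1)-|B|+\delta(G_3)>|A|-\tfrac N2+\delta(G_3)\ge\tfrac{|A|}2$, where the last step uses $|A|\ge N/2$ and $\delta(G_3)\ge N/4$. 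Hence $G_1[A]$ has minimum degree strictly larger than $|A|/2$, and so contains a triangle $\{a_1,a_2,a_3\}$. Because $G_2\cap G_3=\emptyset$ and every $a_ia_j\in G_1$, the Gallai property gives $N_{G_3}(a_i)\cap N_{G_2}(a_j)=\emptyset$ for all $i,j\in\{1,2,3\}$; consequently the union $N_{G_3}(a_1)\cup N_{G_3}(a_2)\cup N_{G_3}(a_3)$ (a subset of $B$ of size at least $\delta(G_3)$) is disjoint from $(N_{G_2}(a_1)\cup N_{G_2}(a_2)\cup N_{G_2}(a_3))\cap B$. The plan is to play this disjointness against the lower bounds $|N_{G_2}(a_i)\cap B|\ge\delta(G_2)+\delta(G_3)-|A|$ from the first step and the inequality $\delta(G_2)+2\delta(G_3)\le N$, using up the slack in $\delta(G_2)+\delta(G_3)>2N/3$; in the complementary sub-regime where $|A|$ is near its maximum value $N-\delta(G_3)$, one instead double-counts the $G_1$-edges from $A$ to $B$ against the bound $|N_{G_1}(a)\cap B|\le|B|-\delta(G_3)$, together with the large value of $\delta(G_1[A])$.

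The hard part will be exactly this last step. In contrast with the $r\ge 3$ case, a dense monochromatic subgraph — here the triangle (indeed a near-clique) in $G_1[A]$ — does \emph{not} directly produce a rainbow triangle: the Gallai conditions systematically block the obvious candidates, so the contradiction has to be extracted quantitatively from the interaction of the bipartite structure of $G_3$, the lower bound $\delta(G_3)\ge N/4$, and the abundance of bi-chromatic $12$-edges coming from $X=V$ (which, via the argument of the previous paragraph applied inside $A$, severely restricts how the $G_3$-neighbourhoods of $a_1,a_2,a_3$ can sit relative to one another and to $N_{G_2}(a_1),N_{G_2}(a_2),N_{G_2}(a_3)$). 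Carrying out this final counting carefully, and checking it in both the ``$|A|$ balanced'' and ``$|A|$ large'' regimes, is where the real work of the proof of Claim~\ref{cl: r= 2 case, delta G3 <N/4} lies.
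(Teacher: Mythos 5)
There is a genuine gap: what you have written is a partial set-up followed by an explicit deferral of the decisive step. Your preliminary deductions are fine --- every $a\in A$ has a $G_2$-neighbour inside $A$, the bounds $\vert N_{G_2}(a)\cap B\vert\leq \vert B\vert-\delta(G_3)$ and $\vert N_{G_1}(a)\cap B\vert\leq \vert B\vert -\delta(G_3)$, hence $\delta(G_1[A])>\vert A\vert/2$ and a triangle $\{a_1,a_2,a_3\}$ in $G_1[A]$, and the pairwise disjointness $N_{G_3}(a_i)\cap N_{G_2}(a_j)=\emptyset$. But at the point where a contradiction must actually be produced you only announce a ``plan'' to play this disjointness against earlier lower bounds, concede that the Gallai conditions block all the obvious rainbow triangles, and state that the ``real work'' is a final counting, split into a balanced and a large-$\vert A\vert$ regime, which you never carry out. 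As written there is no contradiction: for instance the lower bound $\vert N_{G_2}(a_i)\cap B\vert \geq \delta(G_2)+\delta(G_3)-\vert A\vert$ you intend to exploit is vacuous once $\vert A\vert\geq \delta(G_2)+\delta(G_3)$, which is compatible with everything established up to that point (indeed $\vert A\vert$ may a priori be as large as $N-\delta(G_3)$), and nothing in your sketch shows how the second regime is handled. So the claim remains unproved.

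For comparison, the paper's argument never needs a triangle in $G_1[A]$ and is much shorter. Assuming $\delta(G_3)\geq \frac{N}{4}$: since $G_2\cap G_3=\emptyset$ and $\vert B\vert\leq \frac{N}{2}<\delta(G_2)+\delta(G_3)$, every $a\in A$ has a $G_2$-neighbour $a'\in A$; if some $ab\in G_2$ with $b\in B$, then $N_{G_1}(a)$, $N_{G_3}(a')\subseteq B$ and $N_{G_3}(b)\subseteq A$ are pairwise disjoint with total size at least $\delta(G_1)+2\delta(G_3)>N$, a contradiction, so $G_2[A,B]=\emptyset$. Consequently $\vert B\vert\geq \delta(G_2)>\frac{N}{3}$, while for any $b\in B$ and a $G_1$-neighbour $a\in A$ of $b$ (which exists as $\delta(G_1)>\frac{N}{2}\geq\vert B\vert$) the disjoint sets $N_{G_2}(a)$ and $N_{G_3}(b)$ both lie in $A$, forcing $\vert A\vert\geq \delta(G_2)+\delta(G_3)>\frac{2N}{3}$, incompatible with $\vert B\vert>\frac{N}{3}$. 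Your observation that every vertex of $A$ has a $G_2$-neighbour in $A$ is essentially the paper's first move; the natural way to complete your proof is to follow it with the elimination of $G_2[A,B]$ edges and the comparison of $\vert A\vert$ with $\vert B\vert$, rather than routing through a triangle in $G_1[A]$ and an unfinished two-regime count.
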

\begin{proof}
Suppose $\delta(G_3)\geq \frac{N}{4}$.	Fix $a\in A$. Since $G_2 \cap G_3 = \emptyset$ and $\vert B \vert < \delta(G_2) + \delta(G_3)$, there exists $a' \in A$ with $aa' \in G_2$. Suppose there exists $b\in B$ with $ab\in G_2$. Then $N_{G_1}(a)$, $N_{G_3}(a') \subseteq B$ and $N_{G_3}(b) \subseteq A$ are disjoint sets, which contradicts the fact that $\delta(G_1) + 2\delta(G_3) > N$ whenever $\delta(G_3) \geq \frac{N}{4}$. Hence no such $b\in B$ can exist, and we must have $\vert G_2[A,B]\vert = 0$.

Since $\delta(G_2) > \frac{N}{3}$, this implies $\vert B \vert > \frac{N}{3}$. For $b \in B$, there exists $a \in A$ with $ab \in G_1$ as $\delta(G_1) > \frac{N}{2}$. As $\mathbf{G}$ does not contain rainbow triangles, the sets $N_{G_2}(a)$ and $N_{G_3}(b)$ are disjoint subsets of $A$. Hence $\vert A \vert > \frac{2N}{3}$, which contradicts the fact that $\vert B \vert > \frac{N}{3}$. Thus we must have $\delta(G_3)<\frac{N}{4}$, as claimed.
\end{proof}
\noindent By Claim~\ref{cl: r= 2 case, delta G3 <N/4},  $\delta(G_3) < \frac{N}{4}$, whence by our minimum degree assumption $\delta(G_2) > \frac{5N}{12}$. 
\begin{claim}\label{joint 3's}
Let $u,v \in V$ be vertices both in $A$ or both in $B$. Then there exists $w$ such that $uw, vw \in G_3$. 
\end{claim}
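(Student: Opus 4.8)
The plan is to argue by contradiction, exploiting the absence of rainbow triangles to force $N_{G_2}(w)$ to miss a set of size roughly $3\delta(G_3)$ for a suitable vertex $w$. By the symmetry between the two sides of the bipartition we may assume $u,v\in A$; then $N_{G_3}(u),N_{G_3}(v)\subseteq B$, since $G_3$ is bipartite with parts $A$ and $B$. Suppose for a contradiction that $N_{G_3}(u)\cap N_{G_3}(v)=\emptyset$, and set $S:=N_{G_3}(u)\cup N_{G_3}(v)\subseteq B$, so that $\lvert S\rvert=\lvert N_{G_3}(u)\rvert+\lvert N_{G_3}(v)\rvert\geq 2\delta(G_3)$; recall also that $\delta(G_3)>\frac N6$.

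Since $\delta(G_1)>\frac N2$ we have $\lvert N_{G_1}(u)\cap N_{G_1}(v)\rvert\geq 2\delta(G_1)-N>0$, so we can fix a common $G_1$-neighbour $w$ of $u$ and $v$. Because $\mathbf G$ contains no rainbow triangle, the edges $uw,vw\in G_1$ force $N_{G_2}(w)$ to be disjoint from both $N_{G_3}(u)$ and $N_{G_3}(v)$, hence from $S$; therefore $\lvert N_{G_2}(w)\cap B\rvert\leq \lvert B\rvert-\lvert S\rvert\leq \lvert B\rvert-2\delta(G_3)$. To bound $\lvert N_{G_2}(w)\cap A\rvert$ I distinguish two cases. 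If $w\in B$, then $N_{G_3}(w)\subseteq A$, and since $G_2\cap G_3=\emptyset$ (Lemma~\ref{lem: mindeg, no 23 edge r=2}) the sets $N_{G_2}(w)$ and $N_{G_3}(w)$ are disjoint, so $\lvert N_{G_2}(w)\cap A\rvert\leq \lvert A\rvert-\delta(G_3)$. If $w\in A$, then by Claim~\ref{1neigh in both} the vertex $w$ has a $G_1$-neighbour $b\in B$; now $wb\in G_1$ together with the absence of rainbow triangles gives $N_{G_2}(w)\cap N_{G_3}(b)=\emptyset$, while $N_{G_3}(b)\subseteq A$ because $b\in B$, so once more $\lvert N_{G_2}(w)\cap A\rvert\leq \lvert A\rvert-\delta(G_3)$.

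In either case $\delta(G_2)\leq \lvert N_{G_2}(w)\rvert\leq(\lvert A\rvert-\delta(G_3))+(\lvert B\rvert-2\delta(G_3))=N-3\delta(G_3)$, i.e.\ $\delta(G_2)+3\delta(G_3)\leq N$. But $\delta(G_2)+3\delta(G_3)=\bigl(\delta(G_2)+\delta(G_3)\bigr)+2\delta(G_3)>\frac{2N}3+\frac N3=N$, using $\delta(G_2)+\delta(G_3)>\frac{2N}3$ and $\delta(G_3)>\frac N6$; this is the desired contradiction. Hence $N_{G_3}(u)\cap N_{G_3}(v)\neq\emptyset$, and any $w$ in this intersection (which automatically lies in $B$) works. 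The only genuinely delicate point is the case $w\in A$, where one must pass through Claim~\ref{1neigh in both} to produce the auxiliary vertex $b\in B$: without it one only gets $\lvert N_{G_2}(w)\rvert\leq N-2\delta(G_3)$, which is too weak. I expect this to be the main obstacle; the rest is a short counting argument.
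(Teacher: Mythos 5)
Your proof is correct and follows essentially the same route as the paper: choose a common $G_1$-neighbour $w$ of $u$ and $v$, observe that the absence of rainbow triangles forces $N_{G_2}(w)$ to avoid $N_{G_3}(u)\cup N_{G_3}(v)\subseteq B$ together with a further set of size at least $\delta(G_3)$ inside $A$, and contradict $\delta(G_2)+3\delta(G_3)>\frac{2N}{3}+2\cdot\frac{N}{6}=N$. The only cosmetic difference is your case split on the location of $w$: the paper applies Claim~\ref{1neigh in both} to $w$ directly to get $b\in B$ with $wb\in G_1$ and $N_{G_3}(b)\subseteq A$, which covers both cases uniformly, while your $w\in B$ branch via $G_2\cap G_3=\emptyset$ is an equally valid shortcut.
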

\begin{proof}
Suppose that $u, v \in V$ are vertices from the same vertex-class of the bipartition $A\sqcup B$, and for simplicity assume that $u,v \in A$. (Our argument will not rely on the relative sizes of $A$ and $B$, so we may make such an assumption without loss of generality.) Since $\delta(G_1) > \frac{N}{2}$, there exists $w \in V$ so that $uw, vw \in G_1$. Furthermore, Claim~\ref{1neigh in both} implies that there exists $b \in B$ with $wb \in G_1$. 

Suppose that $N_{G_3}(u)$ and $N_{G_3}(v)$ are disjoint. Since $\mathbf{G}$ does not contain a rainbow triangle, the sets $N_{G_2}(w)$, $N_{G_3}(b)\subseteq A$ and $N_{G_3}(u)\cup N_{G_3}(v)\subseteq B$ are disjoint. This contradicts the fact that 
\begin{align*}
\vert N_{G_2}(w)\vert + \vert N_{G_3}(b)\vert + \vert N_{G_3}(u)\cup N_{G_3}(v)\vert \geq \delta(G_2) + 3\delta(G_3) > \frac{2N}{3} + 2\frac{N}{6} = N. 
\end{align*}
The claim follows. 
\end{proof}
\noindent 
Fix $b \in B$, and consider the set $C := N_{G_2}(b) \cap A$. By Claim~\ref{1neigh in both}, there exists $a \in A$ with $ab \in G_1$. Since $N_{G_3}(a)$ and $N_{G_2}(b) \cap B$ are disjoint subsets of $B$, it follows that $\vert C \vert \geq \delta(G_2) + \delta(G_3) - \vert B \vert > \frac{N}{6}$. Our next aim is to prove by a simple averaging argument that there exists a vertex $c\in C$ which sends many edges in colour $2$ to $B$, which we will use to derive a final contradiction and conclude the proof of Theorem~\ref{theorem: r =2 case}. 
\begin{claim}
There exists $c \in C$ with $\vert N_{G_2}(c) \cap B \vert > \vert B \vert - \delta(G_3).$
\end{claim}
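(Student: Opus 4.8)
The plan is to establish the Claim by a double-counting argument on the edges of $G_2$ lying between $C$ and $B$, and then, in the sentence immediately following the Claim (which closes the proof of Theorem~\ref{theorem: r =2 case}), to use the resulting vertex $c$ to reach a contradiction: since $G_3=G_3[X]$ is bipartite with bipartition $A\sqcup B$ and $c\in C\subseteq A$, we have $N_{G_3}(c)\subseteq B$, and as $G_2\cap G_3=\emptyset$ by Lemma~\ref{lem: mindeg, no 23 edge r=2} the set $N_{G_3}(c)$, of size at least $\delta(G_3)$, is disjoint from $N_{G_2}(c)$; hence $\vert N_{G_2}(c)\cap B\vert\leq \vert B\vert-\delta(G_3)$, contradicting the Claim. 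So the whole point of the Claim is to manufacture, by counting, a vertex of $C$ that is $G_2$-adjacent to an impermissibly large portion of $B$.

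First I would pin down a lower bound on $\vert N_{G_2}(b')\cap C\vert$ for each $b'\in B$. By Claim~\ref{1neigh in both}, $b'$ has a $G_1$-neighbour $a'\in A$; since $\mathbf{G}$ contains no rainbow triangle, $N_{G_3}(a')$ and $N_{G_2}(b')$ are disjoint, and $N_{G_3}(a')\subseteq B$ because $G_3$ is bipartite with bipartition $A\sqcup B$. Hence $\vert N_{G_2}(b')\cap B\vert\leq \vert B\vert-\delta(G_3)$, and therefore $\vert N_{G_2}(b')\cap A\vert\geq \delta(G_2)+\delta(G_3)-\vert B\vert$; since $C\subseteq A$ with $\vert A\setminus C\vert=\vert A\vert-\vert C\vert$, this gives $\vert N_{G_2}(b')\cap C\vert\geq \delta(G_2)+\delta(G_3)-N+\vert C\vert$.

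Next I would sum over $b'\in B$ to obtain $\vert G_2[C,B]\vert=\sum_{b'\in B}\vert N_{G_2}(b')\cap C\vert\geq \vert B\vert\bigl(\delta(G_2)+\delta(G_3)-N+\vert C\vert\bigr)$, and then average over $c\in C$: some $c\in C$ satisfies $\vert N_{G_2}(c)\cap B\vert\geq \frac{\vert B\vert}{\vert C\vert}\bigl(\delta(G_2)+\delta(G_3)-N+\vert C\vert\bigr)$. It then remains to check that this quantity exceeds $\vert B\vert-\delta(G_3)$, which after rearranging is the inequality $\vert C\vert\,\delta(G_3)>\vert B\vert\,\bigl(N-\delta(G_2)-\delta(G_3)\bigr)$. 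To verify it I would feed in the bounds already in force at this point: $\delta(G_3)>\tfrac{N}{6}$, the bound $\delta(G_2)>\tfrac{5N}{12}$ coming from Claim~\ref{cl: r= 2 case, delta G3 <N/4} together with $\delta(G_2)+\delta(G_3)>\tfrac{2N}{3}$, the lower bound $\vert C\vert\geq\delta(G_2)+\delta(G_3)-\vert B\vert>\tfrac{N}{6}$ recorded just above the Claim, and $\vert B\vert\leq\tfrac{N}{2}$.

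I expect closing this arithmetic to be the main obstacle, since the estimate $\vert N_{G_2}(b')\cap(A\setminus C)\vert\leq\vert A\vert-\vert C\vert$ used above is wasteful. To tighten it I would exploit further structure available at this stage: via Claim~\ref{joint 3's} and the absence of rainbow triangles, any two vertices $b,b'\in B$ have a common $G_3$-neighbour $w\in A$ whose $G_1$-neighbourhood (of size more than $N-\delta(G_1)$'s complement) is disjoint from $N_{G_2}(b)\cup N_{G_2}(b')$, forcing $\vert N_{G_2}(b)\cup N_{G_2}(b')\vert<N-\delta(G_1)<\tfrac{N}{2}$ and hence a large common $G_2$-neighbourhood of $b$ and $b'$; a standard convexity/double-counting estimate built from this (or the consequent upper bound on $\vert B\vert$) should sharpen either the per-$b'$ lower bound on $\vert N_{G_2}(b')\cap C\vert$ or the global count enough to make the required strict inequality go through, producing the vertex $c$ demanded by the Claim.
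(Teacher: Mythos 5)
Your overall template --- a per-vertex lower bound on $\vert N_{G_2}(b')\cap C\vert$ valid for every $b'\in B$, followed by averaging over $C$ --- is the same as the paper's, but the bound you actually establish is too weak, and the arithmetic you need demonstrably fails rather than being merely tight. Your step 1 yields $\vert N_{G_2}(b')\cap C\vert\geq \delta(G_2)+\delta(G_3)-N+\vert C\vert$, and the inequality you must then verify, $\vert C\vert\,\delta(G_3)>\vert B\vert\left(N-\delta(G_2)-\delta(G_3)\right)$, is not forced by the constraints in play: with $\delta(G_2)$ near $\frac{5N}{12}$, $\delta(G_3)$ near $\frac{N}{4}$, $\vert B\vert$ near $\frac{N}{2}$ and $\vert C\vert$ near $\frac{N}{6}$, the left side is about $\frac{N^2}{24}$ while the right side is about $\frac{N^2}{6}$. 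So the argument as written does not close, as you yourself anticipate, and everything hinges on the sharpening you only gesture at in the final paragraph.

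That sketched repair is in fact exactly the paper's proof, and it does work --- but the crucial point, which you leave unexecuted, is to apply the joint-$G_3$-neighbour argument to the specific pair $(b,b')$ where $b$ is the fixed vertex defining $C=N_{G_2}(b)\cap A$, rather than to generic pairs in $B$. By Claim~\ref{joint 3's} there is $a\in A$ with $ab,ab'\in G_3$; since $N_{G_1}(a)$ is disjoint from $N_{G_2}(b)\cup N_{G_2}(b')\supseteq C\cup N_{G_2}(b')$ and $\delta(G_1)>\frac{N}{2}$, we get $\vert C\cup N_{G_2}(b')\vert<\frac{N}{2}$ and hence $\vert N_{G_2}(b')\cap C\vert>\vert C\vert+\delta(G_2)-\frac{N}{2}$, which improves your per-$b'$ bound by $\frac{N}{2}-\delta(G_3)>\frac{N}{4}$. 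Averaging then produces $c\in C$ with $\vert N_{G_2}(c)\cap B\vert>\frac{\vert B\vert}{\vert C\vert}\left(\vert C\vert+\delta(G_2)-\frac{N}{2}\right)$, and the comparison with $\vert B\vert-\delta(G_3)$ reduces to $\left(\delta(G_2)-\frac{N}{2}\right)\vert B\vert+\delta(G_3)\vert C\vert>0$, which does follow from $\delta(G_2)>\frac{5N}{12}$, $\delta(G_2)+\delta(G_3)>\frac{2N}{3}$, $\vert B\vert\leq\frac{N}{2}$ and $\vert C\vert>\frac{N}{6}$. Since this decisive computation is precisely what you deferred with ``should go through'', your proposal has a genuine gap at its quantitative heart, even though the tool you point to (Claim~\ref{joint 3's} plus the absence of rainbow triangles) is the right one and, correctly deployed, gives the paper's argument.
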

\begin{proof}
We first prove that for $b' \in B$ we have $\vert N_{G_2}(b') \cap C \vert > \vert C \vert  - \left(\frac{N}{2} - \delta(G_2)\right)$. Indeed, suppose that there exists a vertex $b' \in B$ which violates this condition. Let $a \in A$ be chosen so that $ab, ab' \in G_3$, which exists by Claim~\ref{joint 3's}. 

Since $b'$ satisfies the condition $\vert N_{G_2}(b') \cap C \vert \leq \vert C \vert  - \left(\frac{N}{2} - \delta(G_2)\right)$, we obtain that 
\begin{align*}
\vert N_{G_2}(b) \cup N_{G_2}(b') \vert \geq \vert C \cup N_{G_2}(b')\vert \geq \vert N_{G_2}(b') \vert + \vert C \vert - \vert N_{G_2}(b') \cap C \vert \geq \frac{N}{2}>N-\vert N_{G_1}(a)\vert.
\end{align*}
However, this contradicts the fact that the sets $N_{G_2}(b) \cup N_{G_2}(b')$ and $N_{G_1}(a)$ are disjoint. Hence for any $b' \in B$ we have $\vert N_{G_2}(b') \cap C \vert > \vert C \vert  - \left(\frac{N}{2} - \delta(G_2)\right)$. By averaging, it follows that there exists a vertex $c \in C$ with 
\begin{align} \label{eq:lowerbound}
\vert N_{G_2}(c) \cap B \vert > \frac{\left(\vert C \vert + \delta(G_2) - \frac{N}{2}\right) \vert B \vert}{\vert C \vert}.
\end{align}
Now,
\begin{align} \label{eq:intermediate}
\left(\vert C \vert + \delta(G_2) - \frac{N}{2}\right) \vert B \vert - \left(\vert B \vert - \delta(G_3) \right) \vert C \vert 
= \left(\delta(G_2) - \frac{N}{2}\right) \vert B \vert + \delta(G_3) \vert C \vert.
\end{align}
Since $\frac{5N}{12}<\delta(G_2) < \frac{N}{2}$, $\vert B \vert \leq \frac{N}{2}$ and $\vert C \vert > \frac{N}{6}$, the right-hand side of~\eqref{eq:intermediate} is at least 
\begin{align*}
\delta(G_2)\frac{N}{2} - \frac{N^2}{4} + \delta(G_3)\frac{N}{6} 
\geq \frac{2N}{3} \cdot \frac{N}{6} + \frac{5N}{12} \cdot \frac{N}{3} - \frac{N^2}{4} = 0.
\end{align*}
Thus combining equations~\eqref{eq:lowerbound} and~\eqref{eq:intermediate}, we obtain that $\vert N_{G_2}(c) \cap B \vert > \vert B \vert - \delta(G_3)$. 
\end{proof}
\noindent
We are ready ar last to complete the proof of Theorem~\ref{theorem: r =2 case}. Let $c \in C \subseteq A$ be chosen so that $\vert N_{G_2}(c) \cap B \vert > \vert B \vert - \delta(G_3)$. Let $a \in A$ be chosen so that $ac \in G_1$ (such an $a$ exists since $\vert A \vert \geq \frac{N}{2}>N-\delta(G_1)$). As $\mathbf{G}$ does not contain a rainbow triangle, the sets $N_{G_2}(c) \cap B$ and $N_{G_3}(a)$ must be disjoint subsets of $B$. However, this contradicts the fact that $\vert N_{G_3}(a)\vert + \vert N_{G_2}(c) \cap B \vert > \vert B \vert$. This final contradiction completes the proof. 
\end{proof}

\section{Concluding remarks}\label{section: concluding remarks}	
Our work leaves a number of questions open. To begin with, one could ask for a generalisation of Theorem~\ref{theorem: forcing min degrees} in the style of~\cite[Theorem 1.13]{FRMarkstromRaty22}. Explicitly, say that a triple $(\delta_1, \delta_2, \delta_3)$ with $\delta_1\geq \delta_2\geq \delta_3$ is a \emph{mindegree forcing triple} if for all $n$ sufficiently large, every $n$-vertex $3$-colouring template $\mathbf{G}$ with $\delta(G_i) > \delta_i n$ for every $i\in [3]$ must contain a rainbow triangle.
\begin{problem}
Determine the set of mindegree forcing triples.
\end{problem}
\noindent As a slightly easier question, we ask whether the bound in Theorem~\ref{theorem: forcing min degrees} can be significantly improved  if $\max\left(\delta(G_2), \delta(G_3)\right) > \frac{N}{r+1}$:
\begin{question} \label{stability} Fix $r\in \mathbb{Z}_{\geq 2}$. Let $\mathbf{G}$ be an $n$-vertex Gallai $3$-colouring template with $\delta(G_1)\geq \delta(G_2)\geq \delta(G_3)>0$. Suppose $\delta(G_1)>\frac{r-1}{r}n$ and $\delta(G_2) > \frac{n}{r+1}$. Does there exist a constant $\varepsilon=\varepsilon(r)>0$ such that $\delta(G_2)+\delta(G_3) \leq \left(1-\varepsilon \right)\frac{2n}{r+1}$?
\end{question}
\noindent If such constants $\varepsilon(r)$ exist, it would naturally be interesting to determine their optimal value for each $r\in\mathbb{Z}_{\geq 2}$.   We expect that the proof of Theorem~\ref{theorem: forcing min degrees} could be adapted to yield stability versions of our results (albeit with an unusual flavour: for each $r$,we expect near-extremal constructions must be close to one of the $\mathbf{C_{P,c}}$ templates given in Construction~\ref{construction: general}), but we had not explored this further due to the length of the paper. We suspect such stability results may be a useful tool for tackling Question~\ref{stability}. 

In a different direction, there has been considerable interest in rainbow versions of Dirac's theorem since the work of Joos and Kim on the subject~\cite{JoosKim20} (see notably~\cite{GuptaHamannMuyesserParczykSgueglia22} for some very recent progress on these questions). One could consider analogues of our results in this setting. Explicitly, suppose one has an $n$-vertex  $n$-colouring template $\mathbf{G}^{(n)}$. Let $\mathbf{\delta}:=\frac{1}{n}\left(\delta(G_1), \delta(G_2), \ldots ,  \delta(G_n)\right)$. Which vectors $\mathbf{\delta}$ guarantee the existence of a Hamilton cycle? Previous results have focussed on the value of $\min_i \delta(G_i)$ required for this. But perhaps if many of the $\delta(G_i)$ are large, then we can afford for the smallest one to be quite a bit smaller. For example, suppose we know $\delta(G_1)\geq \delta(G_2)\geq \delta(G_k)=\alpha n$ and $\delta(G_{k+1})\geq \delta(G_{k+2})\geq \ldots \delta(G_n)\geq \beta n$. For a given $k<n$, what values of $\alpha\geq \beta>0$ guarantee the existence of a rainbow Hamilton cycle?

Another natural question in the style of Joos and Kim, given that the existence of rainbow triangles is now fairly well understood, is the existence of triangle factors in colouring templates.
\begin{question}
Let $\mathbf{G}^{(3)}$ be an $3$-colouring template on $3n$ vertices. What triples $(\delta(G_1), \delta(G_2), \delta(G_3))$ guarantee the existence of $n$ vertex-disjoint rainbow triangles in $\mathbf{G}$?
\end{question}

Finally, we focused in this work on colouring \emph{templates}, in which colour classes may overlap. Following Erd{\H o}s and Tuza~\cite{ErdosTuza93}, one could instead consider problems for \emph{colourings} of $K_n$ or of subgraphs of $K_n$, in which the colour classes must be disjoint. Can one obtain analogues of Theorem~\ref{theorem: forcing min degrees} in these settings?

In particular a conjecture of Aharoni~\cite{AharonDeVosHolzman19} (which implies the famous Cacceta-H\"aggkvist conjecture) states that every $n$-vertex $n$-coloured graph in which each colour class has size at least $r$ has \emph{rainbow girth} at most $\lceil n/r\rceil$ (i.e.\ contains a rainbow cycle of length at most $\lceil n/r\rceil$); see Hompe and Huynh~\cite{HompeHuynh22} for recently announced progress on this problem.  Motivated by Aharoni's rainbow cycle conjecture, Clinch, Goerner, Huynh and Illingworth~\cite[Question 2.5]{ClinchGoernerHuynhIllingworth} proposed the more general problem of finding the maximum value of the rainbow girth in an $n$-vertex $t$-colouring template in which every colour class has size at least $r$, while Hompe, Qu and Spirkl~\cite{HompeQuSpirkl} studied the set of pairs $(\alpha, \beta)$ such that any $\alpha n$-coloured graph in which every colour class has size at least $\beta n$ must contain a rainbow triangle. In a similar spirit to the results in this paper, one could consider variants of Aharoni's conjecture of the following form: given a lower bound on the rainbow girth, how large could (a) the sum of the colour classes, or (b) the minimum degree across the colour classes
 be in an $n$-vertex $t$-colouring template? 
\nocite{*}
\section*{Acknowledgements}
This research was supported by the Swedish Research Council grant VR 2021-03687 and postdoctoral grant 213-0204 from the Olle Engkvist Foundation.

\end{document}